\documentclass[11pt]{amsart}

\usepackage{amsmath,amssymb,amsfonts,amsthm}
\usepackage{ifpdf}

\usepackage{comment}

\usepackage[all]{xy}
\SelectTips{cm}{}

\usepackage[section]{placeins}
\usepackage{leftidx}
\usepackage{stmaryrd} 
\usepackage{microtype}

\vfuzz5pt 
\hfuzz5pt 

\pdfminorversion=5

\newcommand{\pathtotrunk}{./}
\newcommand{\pathtodiagrams}{\pathtotrunk}

\newcommand{\mathfig}[2]{\ensuremath{\hspace{-3pt}\begin{array}{c}%
  \raisebox{-2.5pt}{\includegraphics[width=#1\textwidth]{\pathtodiagrams #2}}%
\end{array}\hspace{-3pt}}}

\newcommand{\arxiv}[1]{\href{http://arxiv.org/abs/#1}{\tt arXiv:\nolinkurl{#1}}}

\newcommand{\googlebooks}[1]{(preview at \href{http://books.google.com/books?id=#1}{google books})}

\theoremstyle{plain}
\newtheorem{prop}{Proposition}[subsection]
\makeatletter
\@addtoreset{prop}{section}
\makeatother

\newtheorem{thm}[prop]{Theorem}
\newtheorem{lem}[prop]{Lemma}
\newtheorem*{lem*}{Lemma}
\newtheorem{cor}[prop]{Corollary}
\newtheorem*{cor*}{Corollary}

\newtheorem{defn}[prop]{Definition}         
\newtheorem*{defn*}{Definition}             

\newenvironment{rem}{\vspace{0.3cm}\noindent\textsl{Remark.}}{}  
\newenvironment{example}{\vspace{0.3cm}\noindent\textbf{Example.}}{}  
\newtheorem{rem*}[prop]{Remark}
\numberwithin{equation}{section}


\newcommand{\noop}[1]{}

\def\clap#1{\hbox to 0pt{\hss#1\hss}}


\renewcommand{\imath}{\mathfrak{i}}
\renewcommand{\jmath}{\mathfrak{j}}

\newcommand{\ssum}[1]{\Sigma#1}
\newcommand{\sumhat}{\overline{\sum}}
\newcommand{\sumtah}{\underline{\sum}}

\newcommand{\iso}{\cong}

\newcommand{\qi}[2][q]{\left[#2\right]_{#1}}
\newcommand{\qBinomial}[3][q]{\genfrac{[}{]}{0pt}{}{#2}{#3}_{#1}}

\newcommand{\directSum}{\oplus}
\newcommand{\DirectSum}{\bigoplus}
\newcommand{\tensor}{\otimes}

\newcommand{\Hom}{\operatorname{Hom}}
\newcommand{\End}[1]{\operatorname{End}\left(#1\right)}


\usepackage{hyperref}
\usepackage{graphicx}


\usepackage{tikz}
\usetikzlibrary{shapes}
\usetikzlibrary{backgrounds}
\usetikzlibrary{decorations,decorations.pathreplacing,decorations.markings}
\usetikzlibrary{fit,calc,through}
\usetikzlibrary{external}

\tikzstyle{mid>}=[decoration={markings, mark=at position 0.5 with {\arrow{>}}}, postaction={decorate}]
\tikzstyle{mid<}=[decoration={markings, mark=at position 0.5 with {\arrow{<}}}, postaction={decorate}]
\tikzstyle{upper>}=[decoration={markings, mark=at position 0.8 with {\arrow{>}}}, postaction={decorate}]
\tikzstyle{upper<}=[decoration={markings, mark=at position 0.8 with {\arrow{<}}}, postaction={decorate}]
\tikzstyle{lower>}=[decoration={markings, mark=at position 0.2 with {\arrow{>}}}, postaction={decorate}]
\tikzstyle{lower<}=[decoration={markings, mark=at position 0.2 with {\arrow{<}}}, postaction={decorate}]

\def\Foam{{\mathcal{F}{\rm oam}}}
\newcommand{\alt}{\wedge}
\newcommand{\Alt}[2]{{\textstyle\bigwedge^{#1}_{#2}}}

\newcommand{\one}{1}

\def\l{\lambda}
\def\bZ{{\mathbb{Z}}}
\def\sl{{\mathfrak{sl}}}
\def\Sp{{\mathcal{S}p}}
\def\FSp{{\mathcal{FS}p}}
\def\bC{{\mathbb{C}}}

\def\SL{{\rm{SL}}}

\def\gl{{\mathfrak{gl}}}
\def\dU{\dot{{\mathcal{U}}}_q}

\def\Rep{\mathcal{R}ep}
\def\la{\langle}
\def\ra{\rangle}
\def\dalg{\dot{{U}}_q}

\newcommand{\ul}[1]{{\underline{#1}}}

\newcommand{\Lad}{\mathcal{L}ad}

\usepackage{environ}
\usepackage{xargs}

\newcommandx{\NewEnvironx}[5][2,3]{%
  \expandafter\newcommandx\csname start#1\endcsname[#2][#3]{#4}%
  \NewEnviron{#1}{\csname start#1\expandafter\endcsname\BODY #5}}

\newcommand{\ladderX}{1.5}
\newcommand{\ladderY}{1.5}
\newcommand{\ladderR}{0.6}
\newcommand{\laddercoordinates}[2]{
\foreach \x in {0,...,#1} {
	\foreach \y in {0,...,#2} {
		\coordinate (l\x\y) at (\x * \ladderX, \y * \ladderY);
		\coordinate (u\x\y) at ($(l\x\y)+\ladderR*(0,\ladderY)$);
		\coordinate (d\x\y) at ($(l\x\y)+(0,\ladderY)-\ladderR*(0,\ladderY)$);
	}
}
}
\newcommand{\ladderEn}[5]{
\draw[mid>] (l#1#2) -- (d#1#2);
\draw[mid>] (d#1#2) -- ($(l#1#2)+(0,\ladderY)$) node[left] {#3};
\draw[mid>] ($(l#1#2)+(\ladderX,0)$) -- ($(u#1#2)+(\ladderX,0)$);
\draw[mid>] ($(u#1#2)+(\ladderX,0)$) -- ($(l#1#2)+(\ladderX,\ladderY)$) node[right] {#4};
\draw[mid>] (d#1#2) --node[above]{#5} ($(u#1#2)+(\ladderX,0)$);
}
\newcommand{\ladderE}[4]{\ladderEn{#1}{#2}{#3}{#4}{}}
\newcommand{\ladderFn}[5]{
\draw[mid>] (l#1#2) -- (u#1#2);
\draw[mid>] (u#1#2) -- ($(l#1#2)+(0,\ladderY)$) node[left] {#3};
\draw[mid>] ($(l#1#2)+(\ladderX,0)$) -- ($(d#1#2)+(\ladderX,0)$);
\draw[mid>] ($(d#1#2)+(\ladderX,0)$) -- ($(l#1#2)+(\ladderX,\ladderY)$) node[right] {#4};
\draw[mid>] ($(d#1#2)+(\ladderX,0)$) --node[above]{#5} (u#1#2);
}

\newcommand{\ladderIn}[3]{\draw[mid>] (l#1#2) -- +($#3*(0,\ladderY)$);}
\newcommand{\ladderI}[2]{\ladderIn{#1}{#2}{1}}

\NewEnvironx{ladder}[2]{%
  \begin{tikzpicture}[baseline=13*\ladderY*#2]\laddercoordinates{#1}{#2}}
{\end{tikzpicture}}

\newcommand{\fuse}[3]{\tikz[baseline=0.5cm]{
\coordinate (z1) at (0,0);
\coordinate (z2) at (1,0);
\coordinate (c) at (0.5,0.5);
\coordinate (e) at (0.5,1);
\draw[mid>] (z1) node[below] {$#1$} -- (c);
\draw[mid>] (z2) node[below] {$#2$} -- (c);
\draw[mid>] (c) -- (e) node[above] {$#3$};
}}
\newcommand{\fork}[3]{\tikz[baseline=0.5cm]{
\coordinate (z1) at (0,1);
\coordinate (z2) at (1,1);
\coordinate (c) at (0.5,0.5);
\coordinate (e) at (0.5,0);
\draw[mid<] (z1) node[above] {$#1$} -- (c);
\draw[mid<] (z2) node[above] {$#2$} -- (c);
\draw[mid<] (c) -- (e) node[below] {$#3$};
}}


\def\semicolon{;}
\def\applytolist#1{
    \expandafter\def\csname multi#1\endcsname##1{
        \def\multiack{##1}\ifx\multiack\semicolon
            \def\next{\relax}
        \else
            \csname #1\endcsname{##1}
            \def\next{\csname multi#1\endcsname}
        \fi
        \next}
    \csname multi#1\endcsname}

\def\calc#1{\expandafter\def\csname c#1\endcsname{{\mathcal #1}}}
\applytolist{calc}QWERTYUIOPLKJHGFDSAZXCVBNM;

\usepackage{color}

\usepackage{xcolor}
\definecolor{dark-red}{rgb}{0.7,0.25,0.25}
\definecolor{dark-blue}{rgb}{0.15,0.15,0.55}
\definecolor{medium-blue}{rgb}{0,0,0.65}
\hypersetup{
    colorlinks, linkcolor={dark-red},
    citecolor={dark-blue}, urlcolor={medium-blue}
}

\setlength{\textwidth}{6.5in}
\setlength{\oddsidemargin}{0in}
\setlength{\evensidemargin}{0in}
\setlength{\textheight}{8.5in}
\setlength{\topmargin}{-.25in}

\title{Webs and quantum skew Howe duality}

\author{Sabin~Cautis}
\email{cautis@usc.edu}
\address{Department of Mathematics\\ University of British Columbia \\ Vancouver, Canada}

\author{Joel~Kamnitzer}
\email{jkamnitz@math.toronto.edu}
\address{Department of Mathematics\\ University of Toronto \\ Toronto, Canada}

\author{Scott~Morrison}
\email{scott.morrison@anu.edu.au}
\address{Mathematical Sciences Institute \\ The Australian National University \\ Canberra, Australia}

\begin{document}

\makeatletter
\@addtoreset{equation}{section}
\gdef\theequation{\thesection.\arabic{equation}}
\makeatother

\begin{abstract}
We give a diagrammatic presentation in terms of generators and relations of the representation category of $U_q(\sl_n) $. More precisely, we produce all the relations among $\SL_n$-webs, thus describing the full subcategory $\otimes$-generated by fundamental representations $\Alt{k}{} \mathbb C^n$ (this subcategory can be idempotent completed to recover the entire representation category). Our result answers a question posed by Kuperberg \cite{MR1403861} and affirms conjectures of Kim \cite{math.QA/0310143} and Morrison \cite{0704.1503}. Our main tool is an application of quantum skew Howe duality. 
\end{abstract}

\maketitle

\hypersetup{
    linkcolor={black},
    citecolor={dark-blue}, urlcolor={medium-blue}
}

\tableofcontents

\hypersetup{
    linkcolor={dark-red},
    citecolor={dark-blue}, urlcolor={medium-blue}
}

\section{Introduction}

The representation theory of $\SL_n$ is a pivotal tensor category, and it is natural to ask for a presentation by generators and relations, as a pivotal tensor category.

There are two choices one needs to make. First, one can pass to a full subcategory whose idempotent completion recovers the entire representation category. In particular, in this paper we look at the full subcategory, denoted $\Rep(\SL_n)$, whose objects are isomorphic to tensor products of the fundamental representations $\Alt{k}{} \mathbb C^n$ of $\sl_n$.

Second, we need to decide which generators to use. We take the natural maps
$$\Alt{k}{} \bC^n \tensor \Alt{l}{} \bC^n \rightarrow \Alt{k+l}{} \bC^n \ \ \text{ and } \ \ \Alt{k+l}{} \bC^n \rightarrow \Alt{k}{} \bC^n \tensor \Alt{l}{} \bC^n$$
which we depict diagrammatically as follows (where we read from the bottom up)
\begin{equation} \label{eq:1}
\fuse{k}{l}{k+l} \ \ \text{ and } \ \ \fork{k}{l}{k+l}.
\end{equation}
It is relatively easy to show that these are indeed generators, {\it i.e.} that every $\SL_n$-linear map between tensor products of fundamental representations can be written as tensor products and compositions of these maps, along with the duality, pairing, and copairing maps \cite[Prop. 3.5.8]{0704.1503}. The question then, is to identify all the relations between compositions of these generators.

Said another way, we have a pivotal category, the ``free spider category'' $\FSp(\SL_n) $, of trivalent webs made up by glueing together the pieces in (\ref{eq:1}). The edges in these webs are oriented and labelled by $\{1, \ldots, n-1\}$. Moreover, we have a full and dominant functor $ \FSp(\SL_n) \rightarrow \Rep(\SL_n) $. The question is to identify the pivotal ideal which is the kernel of this functor.

We completely answer this question (Theorem \ref{thm:main}) by giving generators of this pivotal ideal in section \ref{sec:spider}, equations (\ref{eq:switch}) -- (\ref{eq:commutation}).

\subsection{Some history}
This problem has been studied previously. For $n=2$, there are no trivalent vertices and we do not need to label strands since the only label they could carry is $1$. So, in this case, the free spider category is essentially just the category of embedded 1-manifolds up to isotopy. The kernel of the functor to representation theory is the ideal generated by the relation $\tikz[baseline=-2pt]{\node[draw,circle] {};} = 2$. If we were to ignore orientations, this gives us the Temperley-Lieb category where the objects are indexed by ${\mathbb N}$ and the morphisms are crossingless matchings.

For $n=3$, relations generating the kernel were determined by Kuperberg \cite{MR1403861}:
\begin{align*}
\mathfig{0.04}{clockwise_circle} & = 3   &
\mathfig{0.045}{bigon} & = -2 \mathfig{0.01}{strand}
\end{align*}
\begin{align*}
\mathfig{0.085}{oriented_square} & = \mathfig{0.085}{two_strands_horizontal} + \mathfig{0.085}{two_strands_vertical}.
\end{align*}
They allow one to remove circles, bigons, and squares. He introduced the term ``$\SL_3$ spider" for the resulting diagrammatic category.

For $n \geq 4$, relations generating the kernel were proposed by Kim in \cite{math.QA/0310143} (for $n=4$) and by the third author in \cite{0704.1503} (for any $n$). However, they did not prove that their lists of relations are complete.

\subsection{Main result}
We define the $\SL_n$-spider, denoted $\Sp(\SL_n) $, to be the quotient of $ \FSp(\SL_n) $ by the relations (\ref{eq:switch}) -- (\ref{eq:commutation}), which all involve webs with $\le $ 4 boundary edges.  Our main result (Theorem \ref{thm:main}) states that $ \Sp(\SL_n) $ is equivalent to $ \Rep(\SL_n) $.  In particular this shows that the relations in \cite{0704.1503} are complete.  Quite surprisingly, we do not need the most complicated relations from \cite{0704.1503}.

\subsection{Skew Howe duality and webs}
The core idea of our proof is to use skew Howe duality.  We give a recipe for the relations in $\Rep(\SL_n)$ as certain truncations of relations holding in $U(\gl_m)$, for $m$ sufficiently large. We now give a quick overview of the argument.

Consider the commuting actions of $U(\sl_n)$ and $U(\gl_m)$ on $\Alt{\bullet}{}(\mathbb{C}^n \tensor \mathbb{C}^m)$.  Skew Howe duality tells us that the resulting map
\begin{equation}\label{eq:2}
\dot{U}(\gl_m) \rightarrow \bigoplus_K \operatorname{End}_{U(\sl_n)}\left(\Alt{K}{}(\bC^n \tensor \bC^m)\right)
\end{equation}
is surjective, where $\dot{U}(\gl_m)$ denotes Lusztig's idempotent form. Moreover, we prove that its kernel is the ideal generated by those weight space idempotents falling outside the weight support of $\Alt{\bullet}{}(\mathbb C^n \tensor \mathbb C^m)$.  This result is proved in section \ref{sec:fully-faithful}. The quotient of $\dot{U}(\gl_m)$ by this ideal is denoted $\dot{U}^n(\gl_m)$.

Now, as $U(\sl_n)$-representations, we have
\begin{equation}\label{eq:3}
\Alt{K}{}(\mathbb C^n \tensor \mathbb C^m)  = \Alt{K}{}(\bC^n \directSum \cdots \directSum \bC^n) = \DirectSum_{\underline{k}: \sum \underline{k} = K} \Alt{k_1}{} \bC^n \tensor \cdots \tensor \Alt{k_m}{} \bC^n.
\end{equation}

Thus combining (\ref{eq:2}) and (\ref{eq:3}) we obtain an isomorphism
\begin{equation}\label{eq:4}
\dot{U}^n(\gl_m) \xrightarrow{\sim} \bigoplus_{\ul{k}, \ul{l} : \sum \ul{k} = \sum \ul{l}} \Hom_{U(\sl_n)}\left(\Alt{k_1}{} \mathbb C^n \tensor \cdots \tensor \Alt{k_m}{} \mathbb C^n, \Alt{l_1}{} \mathbb C^n \tensor \cdots \tensor \Alt{l_m}{} \mathbb C^n\right)
\end{equation}
Under this map, elements of $U(\gl_m)$ are sent to particular webs, which we call ladders. Diagram (\ref{eq:5}) illustrates the image of $F_1^{(r)}F_2^{(t)}E_1^{(s)} \one_{k_1k_2k_3} \in \dot{U}(\gl_3)$ under this map.
\begin{figure}[ht]
\begin{equation}\label{eq:5}
\begin{ladder}{2}{3}
\node[left] at (l00) {$k_1$};
\node[left] at (l10) {$k_2$};
\node[left] at (l20) {$k_3$};
\ladderFn{0}{0}{$k_1{+}s$}{$k_2{-}s$}{$s$}
\ladderEn{1}{1}{\small $k_2{-}s{-}t$}{$k_3{+}t$}{$t$}
\ladderEn{0}{2}{$k_1{+}s{-}r$}{\small $k_2{-}s{-}t{+}r$}{$r$}
\ladderI{0}{1}
\ladderI{2}{0}
\ladderI{2}{2}
\end{ladder}
\end{equation}
 \caption{Ladder corresponding to $F_1^{(r)}F_2^{(t)}E_1^{(s)} \one_{k_1k_2k_3} \in \dot{U}(\gl_3)$ (reading bottom to top in the diagram, right to left in the word).}
 \end{figure}

This allows us to write the generating relations of $\dot{U}^n(\gl_m) $ in a diagrammatic form. By the above isomorphism, we see that these diagrammatic relations become the generating relations in $\Rep(\SL_n)$.

\subsection{The quantum deformation and categorification}

Our whole discussion above has a natural $q$-deformation. In other words, $\Rep(\SL_n)$ becomes the category of $U_q(\sl_n)$-modules generated by tensor products of fundamental representations, while ${\dot U}(\gl_m)$ is replaced by the quantum group $\dalg(\gl_m)$.

In the previous section we assumed $q=1$ in order to simplify the notation. However, in the rest of the paper we will always consider this quantum deformation, with $ q $ taken as a formal variable. In section \ref{sec:quantumskew} we will discuss in detail the quantum skew Howe duality results we need, which may be of independent interest.

The $q$-deformation of the linear maps (\ref{eq:2}) can be rephrased as giving us a functor $ \Psi_m^n : \dU(\gl_m) \rightarrow \Sp(\SL_n) $ where $ \dU(\gl_m) $ is the categorical version of $ \dalg(\gl_m) $ (replacing weight space idempotents with distinct objects).  The categories $\dU(\gl_m)$ and $\Sp(\SL_n)$ can both be categorified. First, $\dU(\gl_m)$ can be lifted to the 2-category $\dot{\mathbb{U}}_q(\gl_m)$, defined by Khovanov-Lauda \cite{MR2628852} and Rouquier \cite{0812.5023}.  On the other hand, the spider category $\Sp(\SL_n)$ can be lifted to a 2-category $\Foam_n$ where the 2-morphisms consist of ``foams'' between webs, introduced by Khovanov \cite{MR2100691} (for n = 3) and partially described for general $n$ by Mackaay, Stosic, and Vaz \cite{MR2491657}.

Thus, it is natural to ask if the functor $ \Psi_m^n $ lifts to a 2-functor
$$\widetilde{\Psi}_m^n: \dot{\mathbb{U}}_q(\gl_m) \rightarrow \Foam_n.$$
In the cases $n=2$ and $3$ such a 2-functor has recently been studied by Mackaay, Pan and Tubbenhauer \cite{1206.2118} and by Lauda, Rose and Queffelec \cite{1212.6076}.

There is something more specific one can say. It is easy to see that the functor $\Psi_m^n$ factors through the quotient $\dU^n(\gl_m)$ of $\dU(\gl_m)$ by the ideal generated by the identity morphisms $1_\l$ where $\l$ is not $n$-bounded (see section \ref{sec:idemform} for the definition of $n$-bounded weights). The point of taking this quotient is that, combining Theorems \ref{th:functorfullyfaithful} and \ref{thm:main} together with diagram (\ref{diag:main}), we have that 
$$\Psi_m^n: \dU^n(\gl_m) \rightarrow \Sp(\SL_n)$$
is fully faithful. Thus the spider category is essentially a quotient of the limit category $\dU(\gl_\infty)$
$$\Sp(\SL_n) \iso \dU^n(\gl_\infty).$$

One can likewise define $\dot{\mathbb{U}}_q^n(\gl_m)$ as the quotient of $\dot{\mathbb{U}}_q(\gl_m)$ by the ideal generated by the identity 2-morphisms ${\rm id}_{1_\l}: 1_\l \rightarrow 1_\l$ where $\l$ is not $n$-bounded. It should be easy to check that $\widetilde{\Psi}_m^n$ factors through this quotient. We then speculate that the 2-functor
$$\widetilde{\Psi}_m^n: \dot{\mathbb{U}}_q^n(\gl_m) \rightarrow \Foam_n$$
is fully faithful on 2-morphisms. This would mean that the foam categories are essentially alternative descriptions of certain quotients of the limit 2-category $\dot{\mathbb{U}}_q(\gl_\infty)$.

\subsection{Braiding and knot invariants}
The category $\Rep(\SL_n)$ is in fact a braided monoidal category where the braiding comes from the $R$-matrix of the quantum group $ U_q(\sl_n)$.  In section \ref{se:braiding}, we express this braiding using webs.

More precisely, we define a braided monoidal category structure on $ \dU^n(\gl_\bullet) = \oplus_m \dU^n(\gl_m) $ using Lusztig's quantum Weyl groups elements.  We then prove (Theorem \ref{th:braiding}) that the functor $ \Phi^n : \dU^n(\gl_\bullet) \rightarrow \Rep(\SL_n) $ carries the braiding in $ \dU^n(\gl_\bullet) $ to the braiding in $ \Rep(\SL_n) $.  In particular, this shows that under quantum skew Howe duality, Lusztig's quantum group element $ T \in \dU(\gl_2) $ is taken to the $R$-matrix braidings $ \beta : \Alt{k}{q} \bC^n_q \otimes \Alt{l}{q} \bC^n_q \rightarrow \Alt{l}{q} \bC^n_q \otimes \Alt{k}{q} \bC^n_q$.  This last fact was previously proven by the first and second authors and Licata in \cite{MR2593278}, following the approach from \cite{MR1896470} who established the analogous result for symmetric Howe duality.  The proofs from \cite{MR1896470} and \cite{MR2593278} involve a somewhat lengthy computation, whereas the categorical approach in this paper provides a far more conceptual proof.

The braided monoidal category structure on $\Rep(\SL_n) $ leads to quantum knot invariants.  The results of this paper show that these knot invariants can be computed using webs. Of course, this was known by the work of Murakami, Ohtsuki and Yamada in \cite{MR1659228}. Our work extends theirs in the sense that while they gave a partition function evaluating closed webs, we have shown that the category of open webs, with the relations described here, is itself equivalent to the representation category. We would like to have an evaluation algorithm, showing directly that any closed web can be evaluated to a scalar by repeated application of the relations here; one has been proposed by Jeong and Kim in \cite{math/0506403}. Grant in \cite{1212.4511} also gives a somewhat indirect evaluation algorithm using a subset of our relations. While the tensor category defined by those relations is thus evaluable, it seems possible that it is degenerate (\emph{i.e.} there are negligible morphisms, which when paired with any other morphism to give a closed diagram, give zero) and hence not equivalent to the representation category. Sikora's work in \cite{MR2171796} gives an alternative presentation of the representation category as a \emph{braided} pivotal category, using only the standard representation and the the determinant map $\tensor^n \bC^n_q \to \Alt{n}{q} \bC^n_q$ and its dual as generators. Again, it is not clear whether the diagram category with the given relations is degenerate or not.

\vspace{.5cm}

\noindent {\bf Acknowledgments:}
The authors benefited from discussions with Arkady Berenstein, Bruce Fontaine, Stavros Garoufalidis, Greg Kuperberg, Valerio Toledano Laredo and Sebastian Zwicknagl. We would like to thank Dongho Moon who pointed out that the relation of Equation \eqref{eq:tag-migration2} was missing in the first 3 versions posted on the arXiv! S.C. was supported by NSF grant DMS-1101439 and the Alfred P. Sloan foundation, S.M. was supported by the Australian Research Council grant DE120100232 and by DOD-DARPA HR0011-12-1-0009, J.K. was supported by NSERC.  We would also like to thank VIA Rail for providing the venue where much of this research was carried out.

\section{The categories \texorpdfstring{$\FSp(\SL_n)$}{FSp(SL\_n)} and \texorpdfstring{$\Sp(\SL_n)$}{Sp(SL\_n)}}\label{sec:diagrams}

We will denote by $[n]_q$ the quantum integer $q^{n-1} + q^{n-3} + \dots + q^{-n+3} + q^{-n+1}$. More generally, we have quantum binomial coefficients
$$\qBinomial{n}{k} := \frac{[n]_q\dots[1]_q}{([n-k]_q \dots [1]_q)([k]_q \dots [1]_q)}.$$
We also adopt the convention that $[-n]_q = -[n]_q$, which is clear if we write $[n]_q = \frac{q^{n} - q^{-n}}{q-q^{-1}}$.

\subsection{The free spider category  \texorpdfstring{$\FSp(\SL_n)$}{FSp(SL\_n)} }
The free spider category $\FSp(\SL_n)$ has as objects sequences $\ul{k}$ in $\{1^\pm,\ldots,(n-1)^\pm\}$, and as morphisms ($\bC(q) $-linear combinations of) oriented planar graphs locally modeled on the following four types of vertices:
\begin{align*}
\fuse{k}{l}{k+l}
\qquad
\fork{k}{l}{k+l}
\qquad
\tikz[baseline=0.7cm]{
\foreach \n in {0,1,2} {
	\coordinate (a\n) at (0.4*\n, 0.8*\n);
}
\draw[mid>] (a0) -- node[right] {$k$} (a1);
\draw[mid<] (a1) -- node[right] {$n-k$} (a2);
\draw (a1) -- +(-0.2,0.1);
}
\qquad
\tikz[baseline=0.7cm]{
\foreach \n in {0,1,2} {
	\coordinate (a\n) at (0.4*\n, 0.8*\n);
}
\draw[mid<] (a0) -- node[right] {$k$} (a1);
\draw[mid>] (a1) -- node[right] {$n-k$} (a2);
\draw (a1) -- +(-0.2,0.1);
}
\end{align*}
with all labels drawn from the set $\{1,\ldots,n-1\}$. The third and fourth graphs depict bivalent vertices, called `tags', which are not rotationally symmetric, meaning that the tag provides a distinguished side. The bottom boundary of any planar graph in $\Hom(\ul{k}, \ul{l})$ is $\ul{k}$ with the strand oriented up for each positive entry, and the strand oriented down for each negative entry. Similarly, the top boundary is determined by $\ul{l}$ in the same way.

\begin{example}
We can build a trivalent vertex with one incoming edge labelled by $n-2$ and two outgoing edges labelled by $n-1$, for example as
\begin{equation}
\begin{tikzpicture}
\coordinate (z1) at (0,0);
\coordinate (z2) at (2,0);
\coordinate (c) at (1,1);
\coordinate (e) at (1,2);
\coordinate (ce) at (1,1.666);
\coordinate (cz1) at (0.333,0.333);
\coordinate (cz2) at (1.666,0.333);
\draw[mid<] (z1) node[below] {$n{-}1$} -- (cz1);
\draw[mid<] (z2) node[below] {$n{-}1$} -- (cz2);
\draw[mid<] (ce) -- (e) node[above] {$n{-}2$};
\draw[mid>] (cz1) --node[left]{$1$} (c);
\draw[mid>] (cz2) --node[right]{$1$} (c);
\draw[mid>] (c) --node[right]{$2$} (ce);
\draw(ce) -- + (0.2,0);
\draw(cz1) -- + (-0.15,0.15);
\draw(cz2) -- + (-0.15,-0.15);
\end{tikzpicture}
\end{equation}
In this example there are various choices about which direction each tag points.
Once we impose the relations in the spider category, these choices will all become equal, up to a sign, via Equation \eqref{eq:switch}.
\end{example}

\begin{example}
There are several ways to build a trivalent vertex with all edges oriented inwards. For instance,
\begin{equation}
\begin{tikzpicture}[baseline=20]
\coordinate (z1) at (0,0);
\coordinate (z2) at (1,0);
\coordinate (c) at (0.5,0.5);
\coordinate (ce) at (0.5,1);
\coordinate (e) at (0.5,1.45);
\draw[mid>] (z1) node[below] {$k$} -- (c);
\draw[mid>] (z2) node[below] {$l$} -- (c);
\draw[mid>] (c) -- node[right] {$k{+}l$} (ce);
\draw[mid<] (ce) -- (e) node[above] {$n{-}k{-}l$};
\draw (ce) -- +(0.2,0);
\end{tikzpicture}
\qquad \text{or} \qquad
\begin{tikzpicture}[baseline=20]
\coordinate (z1) at (0,0);
\coordinate (z2) at (1.5,0);
\coordinate (cze) at (1.25,0.333);
\coordinate (c) at (0.75,1);
\coordinate (e) at (0.75,1.45);
\draw[mid>] (z1) node[below] {$k$} -- (c);
\draw[mid>] (z2) node[below] {$l$} -- (cze);
\draw[mid<] (cze) -- node[right] {$n{-}l$} (c);
\draw (cze) -- + (-0.15,-0.15);
\draw[mid<] (c) -- (e) node[above] {$n{-}k{-}{l}$};
\end{tikzpicture}
\end{equation}
There are choices both in where around the trivalent vertex to place the tag, and on which side of the edge the tag lies. Again, these will all become equal (possibly up to a sign), via Equations \eqref{eq:switch} and \eqref{eq:tag-migration}.
\end{example}

We will often draw diagrams with edges also labelled by $0$ or $n$. This is a notational convenience, to be interpreted as follows. Edges labelled by $0$ and $n$ are to be deleted. Trivalent vertices involving a $0$ edge become simple strands and trivalent vertices involving an edge labelled by $n$ are replaced with tags:
\begin{align*}
\fuse{k}{n-k}{n} & = \tikz[baseline=0.5cm]{\draw[mid>] (0,0) node[below] {$k$} arc (180:90:0.6) node[coordinate] (c) {}; \draw[mid<] (c) arc (90:0:0.6) node[below] {$n-k$}; \draw (c) -- +(0,0.2);} &
\fork{k}{n-k}{n} & = \tikz[baseline=-0.5cm]{\draw[mid<] (0,0) node[above] {$k$} arc (-180:-90:0.6) node[coordinate] (c) {}; \draw[mid>] (c) arc (-90:0:0.6) node[above] {$n-k$}; \draw (c) -- +(0,0.2);}
\end{align*}
Any trivalent vertices with all edges labelled either $0$ or $n$ can be deleted. We will occasionally utilize diagrams with an edge labelled less than $0$ or greater than $n$; by convention these diagrams are 0.

\subsection{Definition of the spider category  \texorpdfstring{$\Sp(\SL_n)$}{Sp(SL\_n)} }\label{sec:spider}
The spider category $\Sp(\SL_n)$ is the quotient of $\FSp(\SL_n)$ by the following relations:

\begin{align}
\tikz[baseline=0.4cm]{
\foreach \n in {0,1,2} {
	\coordinate (a\n) at (0.4*\n, 0.8*\n);
}
\draw[mid>] (a0) -- node[right] {$k$} (a1);
\draw[mid<] (a1) -- node[right] {$n-k$} (a2);
\draw (a1) -- +(-0.2,0.1);
}
& = (-1)^{k(n-k)}
\tikz[baseline=0.4cm]{
\foreach \n in {0,1,2} {
	\coordinate (a\n) at (0.4*\n, 0.8*\n);
}
\draw[mid>] (a0) -- node[right] {$k$} (a1);
\draw[mid<] (a1) -- node[right] {$n-k$} (a2);
\draw (a1) -- +(0.2,-0.1);
}
\displaybreak[1]
\label{eq:switch}
\\
\begin{tikzpicture}[baseline=20]
\foreach \n in {0,...,3} {
	\coordinate (z\n) at (0.4*\n, 0.8*\n);
}
\draw[mid>] (z0) -- node[right] {$k+l$} (z1);
\draw[mid>] (z2) -- node[right] {$k+l$} (z3);
\draw[mid>] (z1) to[out=150,in=-190] node[left] {$k$} (z2);
\draw[mid>] (z1) to[out=-30,in=0] node[right] {$l$} (z2);
\end{tikzpicture}
& = \qBinomial{k+l}{l}
\tikz[baseline=20]{\draw[mid>] (0,0) -- node[right] {$k+l$} (1,2);}
\label{eq:bigon1}
\displaybreak[1] \\
\begin{tikzpicture}[baseline=20]
\foreach \n in {0,...,3} {
	\coordinate (z\n) at (0.4*\n, 0.8*\n);
}
\draw[mid>] (z0) -- node[right] {$k$} (z1);
\draw[mid>] (z2) -- node[right] {$k$} (z3);
\draw[mid<] (z1) to[out=150,in=-190] node[left] {$l$} (z2);
\draw[mid>] (z1) to[out=-30,in=0] node[right] {$k+l$} (z2);
\end{tikzpicture}
& = \qBinomial{n-k}{l}
\tikz[baseline=20]{\draw[mid>] (0,0) -- node[right] {$k$} (1,2);}
\label{eq:bigon2}
\displaybreak[1] \\
\begin{tikzpicture}[baseline]
\foreach \x/\y in {0/0,1/0,2/0,0/1,1/1,0/2} {
	\coordinate(z\x\y) at (\x+\y/2,\y/1.5);
}
\coordinate (z03) at (1,2);
\draw[mid>] (z00) node[below] {$k$} --  (z01);
\draw[mid>] (z01) -- node[left] {$k+l$} (z02);
\draw[mid>] (z10) node[below] {$l$} -- (z01);
\draw[mid>] (z20) node[below] {$m$} -- (z02);
\draw[mid>](z02) -- node[left] {$k+l+m$} (z03);
\end{tikzpicture}
& =
\begin{tikzpicture}[baseline]
\foreach \x/\y in {0/0,1/0,2/0,0/1,1/1,0/2} {
	\coordinate(z\x\y) at (\x+\y/2,\y/1.5);
}
\coordinate (z03) at (1,2);
\draw[mid>] (z00) node[below] {$k$} --  (z02);
\draw[mid>] (z10) node[below] {$l$} -- (z11);
\draw[mid>] (z20) node[below] {$m$} -- (z11);
\draw[mid>] (z11) -- node[right] {$l+m$} (z02);
\draw[mid>](z02) -- node[left] {$k+l+m$} (z03);
\end{tikzpicture}
\label{eq:IH}
\displaybreak[1] \\
\label{eq:tag-migration}
\begin{tikzpicture}[baseline=20]
\coordinate (z1) at (0,0);
\coordinate (z2) at (1,0);
\coordinate (c) at (0.5,0.5);
\coordinate (ce) at (0.5,1);
\coordinate (e) at (0.5,1.45);
\draw[mid>] (z1) node[below] {$k$} -- (c);
\draw[mid>] (z2) node[below] {$l$} -- (c);
\draw[mid>] (c) -- node[right] {$k{+}l$} (ce);
\draw[mid<] (ce) -- (e) node[above] {$n{-}k{-}l$};
\draw (ce) -- +(0.2,0);
\end{tikzpicture}
&=
\begin{tikzpicture}[baseline=20]
\coordinate (z1) at (0,0);
\coordinate (z2) at (1.5,0);
\coordinate (cze) at (1.25,0.333);
\coordinate (c) at (0.75,1);
\coordinate (e) at (0.75,1.45);
\draw[mid>] (z1) node[below] {$k$} -- (c);
\draw[mid>] (z2) node[below] {$l$} -- (cze);
\draw[mid<] (cze) -- node[right] {$n{-}l$} (c);
\draw (cze) -- + (0.15,0.15);
\draw[mid<] (c) -- (e) node[above] {$n{-}k{-}l$};
\end{tikzpicture} \\
\label{eq:tag-migration2}
\begin{tikzpicture}[baseline=20]
\coordinate (z1) at (0,0);
\coordinate (z2) at (1,0);
\coordinate (c) at (0.5,0.5);
\coordinate (ce) at (0.5,1);
\coordinate (e) at (0.5,1.45);
\draw[mid>] (z1) node[below] {$k{+}l$} -- (c);
\draw[mid<] (z2) node[below] {$k$} -- (c);
\draw[mid>] (c) -- node[right] {$l$} (ce);
\draw[mid<] (ce) -- (e) node[above] {$n{-}l$};
\draw (ce) -- +(-0.2,0);
\end{tikzpicture}
&=
\begin{tikzpicture}[baseline=20,x=-1cm]
\coordinate (z1) at (0,0);
\coordinate (z2) at (1.5,0);
\coordinate (cze) at (1.25,0.333);
\coordinate (c) at (0.75,1);
\coordinate (e) at (0.75,1.45);
\draw[mid<] (z1) node[below] {$k$} -- (c);
\draw[mid>] (z2) node[below] {$k{+}l$} -- (cze);
\draw[mid<] (cze) -- node[left] {$n{-}k{-}l$} (c);
\draw (cze) -- + (0.15,0.15);
\draw[mid<] (c) -- (e) node[above] {$n{-}l$};
\end{tikzpicture} \\
\label{eq:id1b}
\tikz[baseline=40]{
\laddercoordinates{1}{2}
\ladderEn{0}{0}{$k-s$}{$l+s$}{$s$}
\ladderEn{0}{1}{$k-s-r$}{$l+s+r$}{$r$}
\node[left] at (l00) {$k$};
\node[right] at (l10) {$l$};
}
&=
\qBinomial{r+s}{r}
\tikz[baseline=20]{
\laddercoordinates{1}{1}
\ladderEn{0}{0}{$k-s-r$}{$l+s+r$}{$r+s$}
\node[left] at (l00) {$k$};
\node[right] at (l10) {$l$};
}
\displaybreak[1]
\\
\label{eq:commutation}
\begin{tikzpicture}[baseline=40]
\laddercoordinates{1}{2}
\node[left] at (l00) {$k$};
\node[right] at (l10) {$l$};
\ladderEn{0}{0}{$k{-}s$}{$l{+}s$}{$s$}
\ladderFn{0}{1}{$k{-}s{+}r$}{$l{+}s{-}r$}{$r$}
\end{tikzpicture}
&= \sum_t \qBinomial{k-l+r-s}{t}
\begin{tikzpicture}[baseline=40]
\laddercoordinates{1}{2}
\node[left] at (l00) {$k$};
\node[right] at (l10) {$l$};
\ladderFn{0}{0}{$k{+}r{-}t$}{$l{-}r{+}t$}{$r{-}t$}
\ladderEn{0}{1}{$k{-}s{+}r$}{$l{+}s{-}r$}{$s{-}t$}
\end{tikzpicture}
\end{align}
together with the mirror reflections and the arrow reversals of these. These relations are often refered to as the `switching a tag' \eqref{eq:switch}, `removing a bigon' \eqref{eq:bigon1} and \eqref{eq:bigon2}, `$I=H$' (\ref{eq:IH}), `tag migration' (\ref{eq:tag-migration} and \ref{eq:tag-migration2}), `square removal' \eqref{eq:id1b} and `square switch' \eqref{eq:commutation}.

\begin{rem}
In the relations above we allow strands to be labelled by $0$ and $n$. As before, this means that $0$-strands should be deleted and $n$-strands replaced by tags. 
\end{rem}

\begin{rem}
The relations above are redundant. For example, relation (\ref{eq:id1b}) is not necessary, following readily from relations \eqref{eq:bigon1} and \eqref{eq:IH} (alternatively, \eqref{eq:bigon1} is a special case of \eqref{eq:id1b} with $l=k-s-r=0$).  Relation \eqref{eq:bigon2} is a special case of relation \eqref{eq:commutation}, with some edges labelled by 0 or $n$.  Relation \eqref{eq:tag-migration} is a special case of \eqref{eq:IH} with $ k+l+m=n$. Moreover, relation \eqref{eq:commutation} for $r,s > 1 $ follows from the square switch relation with $ r = s = 1 $ (and the rest of the relations).  There is an easy diagrammatic proof for these facts or they can be proven as consequences of our main theorem. We give the above over-complete list of relations because they would be needed if we worked over $ \mathbb{Z}[q,q^{-1}] $ rather than $ \bC(q) $.
\end{rem}

\begin{lem}\label{lem:consequences} The following are consequences of the relations above:
\begin{align}
\tikz[baseline]{\draw[->] (0,0.5) node[above] {$k$} arc (45:-315:0.5cm);}
&= \qBinomial{n}{k} \label{eq:loop} \\
\label{eq:cancel-tags}
\tikz[baseline=0.6cm]{
\foreach \n in {0,1,2,3} {
	\coordinate (a\n) at (0.4*\n, 0.8*\n);
}
\draw[mid>] (a0) -- node[right] {$k$} (a1);
\draw[mid<] (a1) -- node[right] {$n-k$} (a2);
\draw[mid>] (a2) -- node[right] {$k$} (a3);
\draw (a1) -- +(-0.2,0.1);
\draw (a2) -- +(0.2,-0.1);
}
&= \tikz[baseline=0.6cm]{\draw[mid>] (0,0) -- node[right] {$k$} (1.2,2.4);}
\end{align}
\begin{equation}
\label{eq:serre}
\renewcommand{\ladderY}{1}
\begin{ladder}{2}{3}
\ladderE{0}{0}{}{}
\ladderE{0}{1}{}{}
\ladderE{1}{2}{}{}
\ladderI{0}{2}
\ladderIn{2}{0}{2}
\end{ladder}
- \qi{2}
\begin{ladder}{2}{3}
\ladderE{0}{0}{}{}
\ladderE{1}{1}{}{}
\ladderE{0}{2}{}{}
\ladderI{0}{1}
\ladderI{2}{0}
\ladderI{2}{2}
\end{ladder}
+
\begin{ladder}{2}{3}
\ladderE{1}{0}{}{}
\ladderE{0}{1}{}{}
\ladderE{0}{2}{}{}
\ladderI{0}{0}
\ladderIn{2}{1}{2}
\end{ladder}
= 0
\end{equation}
where we use the convention that any non-vertical unlabelled strand carries a $1$, while the vertical strands have arbitrary compatible labels.
\end{lem}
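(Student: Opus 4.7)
I will prove the three identities in the order \eqref{eq:cancel-tags}, \eqref{eq:loop}, \eqref{eq:serre}, since the first is needed for the second.

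For \eqref{eq:cancel-tags}, my plan is to specialize the bigon relation \eqref{eq:bigon2} to $l = n-k$. Under the convention that an $n$-labelled edge is deleted and its adjacent trivalent vertex becomes a tag, the interior $(k+l) = n$ arc is erased and the bigon degenerates to the tag-tag composition on the left-hand side of \eqref{eq:cancel-tags} (up to planar isotopy and possibly one application of \eqref{eq:switch} to align the tag sides as drawn). The right-hand side of the specialized bigon relation is $\qBinomial{n-k}{n-k} = 1$ times the identity $k$-strand, as claimed.

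For \eqref{eq:loop}, I specialize \eqref{eq:bigon1} to $l = n-k$ and close the top and bottom boundaries together. Both vertical strands, now labelled $k+l = n$, are deleted, while the two incident trivalent vertices of the bigon become tags. The closed right-hand side is $\qBinomial{n}{k}$ times a trivial (empty) closed $n$-strand. The closed left-hand side is a loop consisting of a $k$-arc and an $(n-k)$-arc joined at two tags; applying the just-proved \eqref{eq:cancel-tags} removes the two tags and collapses the loop into a single $k$-coloured loop.

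For the Serre relation \eqref{eq:serre}, I plan to use the observation that under quantum skew Howe duality the three ladder diagrams correspond to $E_2 E_1^2 \one_\l$, $E_1 E_2 E_1 \one_\l$, and $E_1^2 E_2 \one_\l$ in $\dalg(\gl_3)$ for the appropriate weight $\l$, and the stated alternating sum is exactly the quantum Serre relation for these. To derive it directly from the listed spider relations, my plan is to iteratively apply the $I=H$ move \eqref{eq:IH} on the middle column to pass trivalent vertices across one another, collapsing the intermediate bigons that appear using \eqref{eq:bigon1} and \eqref{eq:bigon2}. The $q$-binomial coefficients produced this way will combine to reproduce the Serre pattern $1, -[2]_q, 1$.

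The Serre identity will be the main obstacle. The listed commutation relation \eqref{eq:commutation} describes $E$-$F$ interactions on one pair of adjacent columns, whereas the Serre identity is a purely $E$-type relation spanning three columns; thus one is forced to rely on the associativity move \eqref{eq:IH} together with the bigon relations to rearrange vertices and collapse intermediate configurations. Tracking the $q$-binomial coefficients through these rearrangements is the main technical challenge, although the final answer is pinned down by the fact that the identity must hold in $\dalg(\gl_3)$.
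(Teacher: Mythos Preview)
Your arguments for \eqref{eq:cancel-tags} and \eqref{eq:loop} are essentially correct. For \eqref{eq:cancel-tags} you do exactly what the paper does: specialize \eqref{eq:bigon2} to $l=n-k$. For \eqref{eq:loop} your route via \eqref{eq:bigon1} with $l=n-k$ followed by \eqref{eq:cancel-tags} works, but it is more roundabout than necessary; the paper instead specializes \eqref{eq:bigon2} to $k=0$, whereupon the outer $0$-strands disappear and the bigon itself is already an $l$-circle, with coefficient $\qBinomial{n}{l}$ on the nose, no tag bookkeeping required.

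The real problem is your plan for the Serre identity \eqref{eq:serre}. You explicitly set aside the square-switch relation \eqref{eq:commutation} on the grounds that it governs $E$--$F$ interactions and hence should be irrelevant to a purely $E$-type identity. That intuition is wrong, and it is precisely \eqref{eq:commutation} that does the work in the paper's proof. The mechanism is this: apply the $I=H$ move \eqref{eq:IH} on the \emph{leftmost} upright of the middle ladder $E_1E_2E_1$, merging its two $1$-rungs into a single edge labelled $2$. This produces a diagram containing a genuine square in the middle column with an edge pointing the ``wrong'' way; one then applies \eqref{eq:commutation} to that square (with parameters $(k,l,r,s)=(2,k,k,1)$), obtaining two terms each with coefficient $1$. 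A final application of \eqref{eq:id1b} on the $2$-edges splits them back into pairs of $1$-rungs, yielding exactly the first and third ladders of \eqref{eq:serre}. The coefficient $[2]_q$ on the middle term was there from the start; no $q$-binomials need to combine.

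By contrast, your proposed toolkit of $I=H$ moves plus the bigon relations \eqref{eq:bigon1}, \eqref{eq:bigon2} cannot generate the needed linear dependence: $I=H$ is an equality of single diagrams, and the bigon relations only simplify a diagram to a scalar multiple of another single diagram. Nothing in that list produces a two-term sum from one diagram, so you cannot hope to express $E_1E_2E_1$ as a combination of $E_2E_1^2$ and $E_1^2E_2$ this way. You need the square switch.
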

\begin{proof}
The first identity follows from relation \eqref{eq:bigon2} with $k=0$ after deleting the 0-strings.  The second also follows from \eqref{eq:bigon2} with $l=n-k$ after replacing the $n$-strand with a matching pair of tags.

{ \renewcommand{\ladderY}{1}
Finally, to prove (\ref{eq:serre}), we apply the $I=H$ relation along the leftmost upright to obtain
\begin{align*}
\begin{ladder}{2}{3}
\ladderE{0}{0}{}{}
\ladderE{1}{1}{}{}
\ladderE{0}{2}{}{}
\ladderI{0}{1}
\ladderI{2}{0}
\ladderI{2}{2}
\end{ladder}
& =
\begin{tikzpicture}[baseline=40]
\laddercoordinates{2}{3}
\coordinate (m0) at ($(l00)!.5!(l03)$);
\coordinate (m2) at ($(l20)!.5!(l23)$);
\coordinate (m1m) at ($(m0)!.3!(m2)$);
\coordinate (m1p) at ($(m0)!.7!(m2)$);
\coordinate (m1d) at ($(l10)!.3!(l13)$);
\coordinate (m1u) at ($(l10)!.7!(l13)$);
\draw[mid>] (l00) -- (m0);
\draw[mid>] (m0) -- (l03);
\draw[mid>] (l20) -- (m2);
\draw[mid>] (m2) -- (l23);
\draw[mid>] (l10) -- (m1d);
\draw[mid>] (m1u) -- (l13);
\draw[mid>] (m0) -- node[above] {2} (m1m);
\draw[mid>] (m1p) -- (m2);
\draw[mid>] (m1m) -- (m1u);
\draw[mid>] (m1m) -- (m1d);
\draw[mid>] (m1d) --node[below right] {$k{+}1$} (m1p);
\draw[mid>] (m1p) --node[above right] {$k$} (m1u);
\end{tikzpicture}
\\
& =
\qBinomial{1}{0}
\begin{tikzpicture}[baseline=40]
\laddercoordinates{2}{3}
\coordinate (m0) at ($(l00)!.5!(l03)$);
\coordinate (m2) at ($(l20)!.5!(l23)$);
\coordinate (m1m) at ($(m0)!.3!(m2)$);
\coordinate (m1p) at ($(m0)!.7!(m2)$);
\coordinate (m1d) at ($(l10)!.3!(l13)$);
\coordinate (m1u) at ($(l10)!.7!(l13)$);
\draw[mid>] (l00) -- (m0);
\draw[mid>] (m0) -- (l03);
\draw[mid>] (l20) -- (m2);
\draw[mid>] (m2) -- (l23);
\draw[mid>] (l10) -- (m1d);
\draw[mid>] (m1u) -- (l13);
\draw[mid>] (m0) -- node[below] {2} (m1m);
\draw[mid>] (m1p) -- (m2);
\draw[mid>] (m1m) --node[above left] {$k+2$} (m1u);
\draw[mid<] (m1m) --node[below left] {$k$} (m1d);
\draw[mid<] (m1p) -- (m1u);
\end{tikzpicture}
+
\qBinomial{1}{1}
\begin{tikzpicture}[baseline=40]
\laddercoordinates{2}{3}
\coordinate (m0) at ($(l00)!.5!(l03)$);
\coordinate (m2) at ($(l20)!.5!(l23)$);
\coordinate (m1m) at ($(m0)!.3!(m2)$);
\coordinate (m1p) at ($(m0)!.7!(m2)$);
\coordinate (m1d) at ($(l10)!.3!(l13)$);
\coordinate (m1u) at ($(l10)!.7!(l13)$);
\draw[mid>] (l00) -- (m0);
\draw[mid>] (m0) -- (l03);
\draw[mid>] (l20) -- (m2);
\draw[mid>] (m2) -- (l23);
\draw[mid>] (l10) -- (m1d);
\draw[mid>] (m1u) -- (l13);
\draw[mid>] (m0) -- node[above] {2} (m1m);
\draw[mid>] (m1p) -- (m2);
\draw[mid>] (m1m) --(m1u);
\draw[mid>] (m1d) --node[below right] {$k$} (m1p);
\draw[mid>] (m1p) --node[above right] {$k{-}1$} (m1u);
\end{tikzpicture},
\end{align*}
where, to get the second equality, we apply Equation \eqref{eq:commutation} to the central square with $(k,l,r,s) = (2,k,k,1)$. Both coefficients here are equal to $+1$. Finally an application of Equation \eqref{eq:id1b} on each $2$-strand gives the desired identity.
}
\end{proof}

\begin{rem}
Later, we will use Equation \eqref{eq:serre} in the proof of Proposition \ref{prop:psi}, where it will correspond to the quantum Serre relation \ref{rel:3} in $\dU(\gl_m)$.
\end{rem}

\begin{rem}
Many more local relations hold, as consequences of these. In particular \cite{0704.1503} described another classes of relations, the `Kekul\'{e}' relations:
\begin{equation*}
 \sum_{k=-\sumhat{b}}^{-\sumtah{a} + 1} (-1)^{j+k} \qBinomial{j+k-\max{b}}{j-\ssum{b}} \qBinomial{\min{a} + n -j -k}{\ssum{a} + n -1 -j} \mathfig{0.2}{octagon_jk} = 0,
\end{equation*}
for each $\ssum{b} \leq j \leq \ssum{a}+n-1$ (each edge label is the signed sum of the blue arrows on either side, $\sumtah{a} = \sum{a} - \min{a}$, and $\sumhat{b} = \sum{b}-\min{b}$).
We do not know a diagrammatic argument deriving the Kekul\'{e} relations from the relations presented here; nevertheless, such a derivation must exist, by our main theorem.  (For the simplest Kekul\'{e} relation, originally found by Kim \cite{math.QA/0310143}, we do have such a derivation.)

Further, the main theorem of this paper in particular implies that any closed spider diagram can be reduced to a scalar multiple of the empty diagram, by successive application of the given relations, but we do not have such an evaluation algorithm at this point.
\end{rem}

\section{Statement and proof of the main theorem}\label{sec:theorem}

Recall that $U_q(\sl_n) $ is a $ \bC(q)$-algebra with generators $ E_i, F_i, K_i $ for $ i = 1, \dots, n-1 $ and the following relations
\begin{align*}
& K_iK_j=K_jK_i, \ \ K_jE_iK_j^{-1} = q^{\la i,j \ra} E_i, \ \  K_jF_jK_j^{-1} = q^{-\la i,j \ra} F_i \\
& [E_i,F_j] = \delta_{ij} \frac{K_i - K_i^{-1}}{q-q^{-1}} \\
& [2]_q E_iE_jE_i = E_i^2E_j + E_jE_i^2 \text{ if } |i-j| = 1, \ \ \ [E_i,E_j]=0 \text{ if } |i-j|>1
\end{align*}
where
$$\la i,j \ra = \begin{cases} 2 & \text{ if } i = j \\ -1 & \text{ if } |i-j|=1 \\ 0 & \text{ otherwise. } \end{cases}$$
It is a Hopf algebra with the coproduct given by
\begin{equation} \label{eq:Hopf}
\Delta(E_i) = E_i \otimes K_i + 1 \otimes E_i, \ \ \  \Delta(F_i) = F_i \otimes 1 + K_i^{-1} \otimes F_i, \ \ \ \Delta(K_i) = K_i \otimes K_i
\end{equation}
the antipode by
\begin{equation}\label{antipode}
S(K_i) = K_i^{-1}, \ \ \, S(E_i) = -E_iK_i^{-1}, \ \ \ S(F_i) = -K_i F_i
\end{equation}
and the conunit by
\begin{equation}\label{counit}
\epsilon(K_i) = 1, \ \ \ \epsilon(E_i) = 0, \ \ \ \epsilon(F_i) = 0.
\end{equation}

We will study the category $\Rep(\SL_n)$ whose objects are representation of $U_q(\sl_n) $ isomorphic to tensor products of the fundamental representations $\Alt{k}{q} \bC_q^n$ of $U_q(\sl_n)$.

\subsection{Some generating morphisms}
\label{sec:generating-morphisms}

Denote by $ x_1, \dots, x_n $ the usual basis of the standard $U_q(\sl_n)$-module $\bC_q^n $. Note that $S^2_q \bC_q^n $ is spanned by
$$ x_i \otimes x_j + q x_j \otimes x_i, \text{ for }  i < j , \text{ and } x_i^2, \text{ for all } i. $$
We define the {\bf quantum exterior algebra} of $ \bC_q^n $
$$  \Alt{\bullet}{q}(\bC_q^n) := T \bC_q^n / \langle S^2_q ( \bC_q^n) \rangle $$
to be the tensor algebra, over $\bC(q) $, of $ \bC_q^n $ modulo the quantum symmetric square (see \cite{BZ}). The space $ \Alt{\bullet}{q}(\bC_q^n) $ is a graded $U_q(\sl_n)$-module algebra and we denote the product by $ \wedge_q $. Thus in $ \Alt{\bullet}{q}(\bC_q^n) $ we have that
$$ x_i \wedge_q x_j + q x_j \wedge_q x_i = 0 \text{ for }  i < j, \text{ and } x_i \wedge_q x_i = 0, \text{ for all }  i. $$
If $ S =\{k_1, \dots, k_a\} \subset \{1, \dots, n\} $, with $ k_1 > \dots > k_a $, we write $ x_S = x_{k_1} \wedge_q \cdots \wedge_q x_{k_a} \in \Alt{a}{q}(\bC_q^n) $. The set $ x_S $ where $ S $ ranges over $ k $ element subsets of $ \{1, \dots, n \} $ forms a basis for $ \Alt{k}{q}(\bC_q^n) $. Note that $E_i, F_i$ and $K_i$ act as follows:
$$E_i(x_k) = \begin{cases} x_{k-1} & \text{ if } i=k-1 \\ 0 & \text{ otherwise } \end{cases} \ \
F_i(x_k) = \begin{cases} x_{k+1} & \text{ if } i=k \\ 0 & \text{ otherwise } \end{cases} \ \
K_i(x_k) = \begin{cases} qx_k & \text{ if } k = i \\ q^{-1} x_k & \text{ if } k=i+1 \\ x_k & \text{ otherwise. } \end{cases}$$
Together with the comultiplication this determines how $E_i,F_i,K_i$ act on $x_S$.

We now define a generating set of morphisms in $\Rep(\SL_n)$. If $S, T$ are two disjoint subsets of $ \{1, \dots, n\} $ we define $$ \ell(S, T) = |\{ (i,j) : i \in S, j \in T \text{ and } i < j \}|. $$ Note that $\ell(S,T) + \ell(T,S) = |S||T| $. We define $ M_{k,l} : \Alt{k}{q} (\bC^n_q) \otimes \Alt{l}{q}(\bC^n_q) \rightarrow \Alt{k+l}{q}(\bC^n_q) $ to be the multiplication map $ \wedge_q $, so that we have
\begin{equation*}
M_{k,l}(x_S \otimes x_T) = x_S \wedge_q x_T = \begin{cases} (-q)^{\ell(S, T)} x_{S \cup T} & \text{ if } S \cap T = \emptyset \\
 0 & \text{ otherwise. }
 \end{cases}
\end{equation*}
Note that $ M_{k,l} $ is a $ U_q(\sl_n)$-module map by the definition of the quantum exterior algebra.

On the other hand, we define a $ \bC(q)$-linear map $M'_{k,l} :\Alt{k+l}{q}(\bC^n_q) \rightarrow \Alt{k}{q} (\bC^n_q) \otimes \Alt{l}{q}(\bC^n_q) $ as follows
\begin{align*}
M'_{k,l}(x_S) = (-1)^{kl} \sum_{T \subset S} (-q)^{-\ell(S \smallsetminus T, T)} x_T \otimes x_{S \smallsetminus T}
\end{align*}
where $ T $ ranges over $k$-element subsets of $ S $. Finally, we define $D_k : \Alt{k}{q}( \bC^n_q) \rightarrow (\Alt{n-k}{q}  (\bC^n_q))^*$ by
$$
 D_k(x_S)(x_T) = \begin{cases} (-q)^{\ell(S, T)} & \text{ if } S \cap T = \emptyset \\
 0 & \text{ otherwise. }
 \end{cases}
$$

\begin{lem}\label{lem:slnmaps}
The maps $M'_{k,l}$ and $D_k$ defined above are morphisms of $U_q(\sl_n)$-modules.
\end{lem}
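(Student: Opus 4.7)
The plan is to verify $U_q(\sl_n)$-equivariance of $D_k$ and $M'_{k,l}$ by checking the intertwining condition on the generators $E_i, F_i, K_i$. The preparatory step is to derive explicit formulas for how the generators act on $x_S = x_{k_1} \wedge_q \cdots \wedge_q x_{k_a}$ (with $k_1 > \cdots > k_a$). Iterating the coproduct gives $\Delta^{(a-1)}(E_i) = \sum_j 1^{\otimes(j-1)} \otimes E_i \otimes K_i^{\otimes(a-j)}$; only the position of $x_{i+1}$ in the wedge contributes, and the trailing $K_i$'s act on wedge factors indexed by elements strictly less than $i$ (in the nonzero case $i \notin S$), so all $q$-factors from these tails collapse. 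The result is that $E_i \cdot x_S$ simply replaces $x_{i+1}$ by $x_i$ when possible (and vanishes otherwise) with no $q$-weight, dually for $F_i$, while $K_i \cdot x_S = q^{[i \in S] - [i+1 \in S]} x_S$.

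For $D_k$, I would view it as a bilinear pairing $\langle x_S, x_T\rangle := D_k(x_S)(x_T)$ on $\Alt{k}{q}(\bC_q^n) \otimes \Alt{n-k}{q}(\bC_q^n)$. Being a $U_q(\sl_n)$-module map is equivalent to invariance of this pairing, which, via the coproduct, amounts to the three identities $\langle K_i v, K_i w\rangle = \langle v, w\rangle$, $\langle E_i v, K_i w\rangle + \langle v, E_i w\rangle = 0$, and the $F_i$-analogue. The $K_i$-identity is automatic since $\langle x_S, x_T\rangle \neq 0$ forces $S \sqcup T = \{1, \ldots, n\}$. For the $E_i$-identity, a short case analysis reduces to the case $S \cap T = \{i+1\}$ (with $i \notin S \cup T$), where both summands are nonzero; the combinatorial relation $\ell(S', T) = \ell(S, T') + 1$ (with primes denoting the $i \leftrightarrow i+1$ swap) combines with the $q^{-1}$ from $K_i$ on $x_T$ to give the cancellation $q^{-1}(-q)^{a+1} + (-q)^a = 0$. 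The $F_i$-case is symmetric.

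For $M'_{k,l}$ the same template applies. Compute $M'_{k,l}(E_i \cdot x_S)$ using the formulas above, and compare with $\Delta(E_i) \cdot M'_{k,l}(x_S) = \sum_T (-1)^{kl}(-q)^{-\ell(S \setminus T, T)} \bigl[(E_i x_T) \otimes (K_i x_{S \setminus T}) + x_T \otimes (E_i x_{S \setminus T})\bigr]$. By the action formulas, only subsets $T$ for which $i+1$ lies in exactly one of $T$ or $S \setminus T$ contribute. Terms from the two summands producing the same tensor pattern pair up and cancel via the same $\ell$-swap identity and $q$-cancellation as in the $D_k$ case, leaving exactly the expansion of $M'_{k,l}(E_i \cdot x_S)$. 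The $F_i$-check is symmetric, and $K_i$-equivariance is immediate from the $K_i$ formula.

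The main obstacle is the sign and $q$-power bookkeeping in the $M'_{k,l}$ step: matching up the surviving subsets $T$ across the two summands of $\Delta(E_i) \cdot M'_{k,l}(x_S)$ requires careful comparison of $\ell$-values under the $i \leftrightarrow i+1$ swap. A more conceptual (but less elementary) alternative would be to recognize $\Alt{\bullet}{q}(\bC_q^n)$ as a braided Hopf algebra in $U_q(\sl_n)$-mod with a shuffle coproduct whose $(k,l)$-graded piece is $M'_{k,l}$, making module-equivariance automatic at the cost of verifying that the shuffle coproduct descends past the quantum symmetric square.
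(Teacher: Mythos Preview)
Your proposal is correct and follows essentially the same route as the paper. The paper only writes out the $D_k$ case, phrasing it as the antipode identity $D_k(E_i x_S)(x_T) = D_k(x_S)(-E_iK_i^{-1} x_T)$ rather than your equivalent invariance-of-pairing formulation; the crux in both versions is the combinatorial identity $\ell(S',T) = \ell(S,T') + 1$, exactly as you isolate it. For $M'_{k,l}$ the paper simply says the argument is similar, so your sketch actually provides more detail than the original.

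One small inaccuracy worth fixing when you write this up: in the main case $i+1 \in S$, $i \notin S$, there is in fact \emph{no} cancellation between the two summands of $\Delta(E_i)\cdot M'_{k,l}(x_S)$. Each term (indexed by whether $i+1$ lies in $T$ or in $S\setminus T$) matches a term of $M'_{k,l}(x_{S'})$ directly, and the $\ell$-values agree on the nose because $i,i+1$ are absent from the complementary piece. The $\ell$-swap identity and the $q$-cancellation you describe are needed only in the degenerate case $i,i+1 \in S$ (so $E_i x_S = 0$), where the two summands genuinely produce the same tensor pattern $x_{T'} \otimes x_{S\setminus T}$ with coefficients summing to zero. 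Your plan is still the right one; just separate these two sub-cases in the write-up.
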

\begin{proof}
We prove that $D_k$ is a map of $U_q(\sl_n)$-modules (the proof for $M'_{k,l}$ is similar). First we need to show that
\begin{equation}\label{eq:temp}
E_i(D_k(x_S))(x_T) = D_k(E_i(x_S))(x_T)
\end{equation}
for any $S,T \subset \{1,\dots,n\}$. On the one hand, if $S = \{k_1 > \dots > k_a\}$ then
$$E_i(x_S) = \begin{cases} x_{k_1} \wedge_q \dots \wedge_q x_{k_j-1} \wedge_q x_{k_{j+1}} \wedge_q \dots \wedge_q x_{k_a} & \text{ if } k_j = i+1 \\ 0 & \text{ otherwise. } \end{cases}$$
Thus, the right side of (\ref{eq:temp}) equals
$$\begin{cases} (-q)^{\ell(S',T)} & \text{ if } S' = T^c \\ 0 & \text{ otherwise } \end{cases}$$
where $S' := S \smallsetminus \{ i+1 \} \cup \{ i \}$ and $T^c = \{1, \dots, n\} \setminus T$ is the complement. On the other hand, using the antipode $S$, the left hand side of (\ref{eq:temp}) equals
\begin{align*}
D_k(x_S) (- E_iK_i^{-1}(x_T))
&= D_k(x_S) (-E_i(x_T)) \cdot \begin{cases} q^{-1} & \text{ if } i \in T, i+1 \not\in T \\ q & \text{ if } i+1 \in T, i \not\in T \\ 1 & \text{ otherwise } \end{cases} \\
&= \begin{cases} (-q)^{\ell(S,T')+1} & \text{ if } S^c = T' \\ 0 & \text{ otherwise } \end{cases}
\end{align*}
where $T' := T \smallsetminus \{i+1\} \cup \{ i \}$. Here we used that
$$K_i(x_T) = \begin{cases} q x_T & \text{ if } i \in T, i+1 \not\in T \\ q^{-1} x_T & \text{ if } i+1 \in T, i \not\in T \\ x_T & \text{ otherwise. } \end{cases}$$
Relation (\ref{eq:temp}) now follows since one can check that $S' = T^c$ if and only if $S' \cap T^c$ and if this is the case then $\ell(S',T) = \ell(S,T')+1$. The analogues of (\ref{eq:temp}) for $F_i$ and $K_i$ follow similarly.
\end{proof}

The following result is perhaps of independent interest.
\begin{lem} \label{le:coalg}
The space $\Alt{\bullet}{q}(\bC_q^n)$ carries the structure of a coassociative coalgebra, where comultiplication is given by the map
$$ \Delta = \bigoplus_{k+l=N} M'_{k,l}: \Alt{N}{q}(\bC^n_q) \rightarrow \bigoplus_{k+l=N} \Alt{k}{q}(\bC^n_q) \otimes \Alt{l}{q}(\bC^n_q)$$
and the counit $\epsilon: \Alt{\bullet}{q}(\bC_q^n) \rightarrow \bC$ by $x_S \mapsto \delta_{S,\emptyset}$.
\end{lem}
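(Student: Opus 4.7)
The plan is to verify the two coalgebra axioms directly on the basis $\{x_S\}$, using the explicit formula for $M'_{k,l}$.

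For the counit, I would compute $(\epsilon \otimes \id) \circ \Delta(x_S)$ and $(\id \otimes \epsilon) \circ \Delta(x_S)$ by expanding $\Delta(x_S) = \sum_{T \subset S} (-1)^{|T|(|S|-|T|)} (-q)^{-\ell(S \smallsetminus T, T)} x_T \otimes x_{S \smallsetminus T}$. Only the term $T = \emptyset$ survives on the left-hand side and only $T = S$ survives on the right, both with sign and $q$-coefficient equal to $1$ (since $\ell(S,\emptyset) = \ell(\emptyset, S) = 0$). So both compositions return $x_S$, as required.

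For coassociativity, I would compute both $(\Delta \otimes \id) \circ \Delta(x_S)$ and $(\id \otimes \Delta) \circ \Delta(x_S)$ as triple sums indexed by ordered disjoint decompositions $S = T_1 \sqcup T_2 \sqcup T_3$ (writing $a = |T_1|, b = |T_2|, c = |T_3|$), and match the coefficient of $x_{T_1} \otimes x_{T_2} \otimes x_{T_3}$ term by term. After a first application of $\Delta$ and then a second application on the appropriate factor, the $q$-parts have exponents
\begin{align*}
\text{RHS: } &-\ell(T_3, T_1 \cup T_2) - \ell(T_2, T_1), \\
\text{LHS: } &-\ell(T_2 \cup T_3, T_1) - \ell(T_3, T_2),
\end{align*}
which coincide after using additivity $\ell(A \sqcup B, C) = \ell(A,C) + \ell(B,C)$ (and the same in the other slot); both equal $-\ell(T_3,T_1) - \ell(T_3,T_2) - \ell(T_2,T_1)$. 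A parallel computation shows the sign exponents $(a+b)c + ab + \ell(T_3, T_1 \cup T_2) + \ell(T_2, T_1)$ and $a(b+c) + bc + \ell(T_2 \cup T_3, T_1) + \ell(T_3, T_2)$ agree as well.

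I expect the sign bookkeeping in the coassociativity step to be the main obstacle, since the $(-1)^{kl}$ prefactor in the definition of $M'_{k,l}$, the $(-1)$ from each $(-q)^{-\ell(\cdot,\cdot)}$, and the bilinearity of $\ell$ all need to be tracked simultaneously. Once this is done the identity is a pleasant consequence of elementary identities for $\ell$. The coalgebra structure is moreover compatible with the $U_q(\sl_n)$-action since each $M'_{k,l}$ is a module map by Lemma \ref{lem:slnmaps} (and $\epsilon$ is obviously one), although this point is not strictly required by the statement.
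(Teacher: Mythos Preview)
Your proposal is correct and follows essentially the same route as the paper's proof: both compute the coefficient of $x_{T_1}\otimes x_{T_2}\otimes x_{T_3}$ (the paper writes $A,B,C$) on each side and match them via the additivity of $\ell$ in each argument, while the counit check is the same one-term observation. Your treatment is slightly more explicit in separating the $q$-exponent from the sign exponent, but the argument is otherwise identical.
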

\begin{proof}
To check coassociativity, let $ A, B, C $ be disjoint and consider the coefficient of $ x_A \otimes x_B \otimes x_C $ in $ (\Delta \otimes 1) \circ \Delta (x_S)$.  Checking the definition we see that it equals
$$
(-1)^{|A \cup B||C| + |A||B|} (-q)^{- \ell(C, A \cup B) - \ell(B,A)} = (-1)^{|A||C| + |B||C| + |A||B|} (-q)^{- \ell(C, A) - \ell(C, B) - \ell(B,A)}
$$
which equals its coefficient  in $ (1 \otimes \Delta) \circ \Delta (x_S) $.

The counit identity follows from
\begin{equation*}
x_S \mapsto \sum_{k,l} (-1)^{kl} \sum_{T \subset S, |T|=k} (-q)^{-\ell(S \smallsetminus T,T)} x_T \otimes x_{S \smallsetminus T} \mapsto (-q)^{-\ell(S,\emptyset)} x_{\emptyset} \otimes x_S = x_S. \qedhere
\end{equation*}
\end{proof}

\subsection{Definition of the functor \texorpdfstring{$\Gamma_n: \Sp(\SL_n) \rightarrow \Rep(\SL_n)$}{Gamma\_n}} \label{sec:deffunctor}

We now define the functor from the spider to the representation category $ \Gamma_n:\Sp(\SL_n) \rightarrow \Rep(\SL_n) $. At the level of objects we take
$$(k_1^{\epsilon_1}, \dots, k_m^{\epsilon_m}) \mapsto \left(\Alt{k_1}{q} \bC_q^n\right)^{\epsilon_1} \otimes \dots \otimes \left(\Alt{k_m}{q} \bC_q^n\right)^{\epsilon_m} .$$
where $ \epsilon_i \in \{1, -1\} $ and we interpret $ W^{-1} $ as the dual representation $ W^* $.
For generating morphisms we take
$$ \fuse{k}{l}{k+l} \mapsto M_{k,l} \ \ \text{ and } \ \ \fork{k}{l}{k+l} \mapsto M'_{k,l}.$$

As a special case, this forces us to define $ \Gamma_n $ on tags by
$$
\tikz[baseline=0.4cm]{
\foreach \n in {0,1,2} {
	\coordinate (a\n) at (0.4*\n, 0.8*\n);
}
\draw[mid>] (a0) -- node[right] {$k$} (a1);
\draw[mid<] (a1) -- node[right] {$n-k$} (a2);
\draw (a1) -- +(-0.2,0.1);
} \mapsto D_k \ \ \text{ and } \ \
\tikz[baseline=0.4cm]{
\foreach \n in {0,1,2} {
	\coordinate (a\n) at (0.4*\n, 0.8*\n);
}
\draw[mid>] (a0) -- node[right] {$k$} (a1);
\draw[mid<] (a1) -- node[right] {$n-k$} (a2);
\draw (a1) -- +(0.2,-0.1);
} \mapsto (-1)^{k(n-k)} D_k.
$$

\begin{thm}\label{thm:gamma}
This defines a pivotal functor $\Gamma_n: \Sp(\SL_n) \rightarrow \Rep(\SL_n)$.
\end{thm}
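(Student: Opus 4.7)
The plan is to construct $\Gamma_n$ first as a pivotal functor on $\FSp(\SL_n)$ and then show that each of the relations \eqref{eq:switch}--\eqref{eq:commutation} defining $\Sp(\SL_n)$ is satisfied in $\Rep(\SL_n)$. Since $\FSp(\SL_n)$ is the free pivotal $\bC(q)$-linear category on the specified trivalent vertices and tags, the assignment on generators determines $\Gamma_n$ uniquely, provided each generator maps to a $U_q(\sl_n)$-equivariant morphism and the pivotal structure on $\FSp(\SL_n)$ is carried to the canonical pivotal structure on $\Rep(\SL_n)$. Equivariance is Lemma~\ref{lem:slnmaps} together with the module algebra structure of $\wedge_q$; the pivotal compatibility reduces to a direct check using the explicit formula for $D_k$ that the tag maps implement the canonical identification of $\Alt{k}{q}\bC_q^n$ with $(\Alt{n-k}{q}\bC_q^n)^*$ (up to the sign already encoded in \eqref{eq:switch}).

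Most of the quotient relations are then verified directly on a basis. Relation \eqref{eq:switch} holds by definition. The bigon relations \eqref{eq:bigon1} and \eqref{eq:bigon2} reduce to computing $M_{k,l}\circ M'_{k,l}$ (and its tagged variant) on $x_S$: the composition yields $\sum_{T\subset S,\,|T|=k}(-q)^{\ell(T,S\setminus T)-\ell(S\setminus T,T)}\,x_S$, which collapses to $\qBinomial{k+l}{k}\,x_S$ by the standard $q$-weighted subset identity. The $I=H$ relation \eqref{eq:IH} is the associativity of $\wedge_q$, together with the coassociativity of $\Delta$ from Lemma~\ref{le:coalg} for its mirrored version. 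The tag migration relations \eqref{eq:tag-migration} and \eqref{eq:tag-migration2} unwind on $x_S\otimes x_T$ to the statement that $\Alt{\bullet}{q}(\bC_q^n)$ is a Frobenius algebra with respect to the pairing induced by $D_n\circ\wedge_q$, which comes down to matching the exterior signs $\ell(\cdot,\cdot)$ against the duality pairing signs.

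The ladder relations \eqref{eq:id1b} and \eqref{eq:commutation} are the heart of the matter. The square-removal relation \eqref{eq:id1b} is a formal consequence of \eqref{eq:bigon1} and \eqref{eq:IH}, as noted in the remark following the list of defining relations, so no separate verification is needed. The square-switch relation \eqref{eq:commutation} for general $r,s$ likewise reduces, as indicated in that remark, to the case $r=s=1$ combined with the other relations. The case $r=s=1$ is the main obstacle: it must be verified by evaluating both sides on a basis vector $x_S\otimes x_{S'}$ and performing a careful $q$-sign count, splitting into cases according to whether the index being moved lies in $S\cap S'$, $S\setminus S'$, or $S'\setminus S$. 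This is precisely where the commutator $[E_1,F_1]=\frac{K_1-K_1^{-1}}{q-q^{-1}}$ of $U_q(\gl_2)$ is transported through quantum skew Howe duality into the relations of $\Rep(\SL_n)$, and is the most computationally substantial step of the proof.
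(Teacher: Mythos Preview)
Your outline follows the same strategy as the paper's proof: define $\Gamma_n$ on $\FSp(\SL_n)$ via the generating morphisms (using Lemma~\ref{lem:slnmaps}), then verify relations \eqref{eq:switch}--\eqref{eq:commutation} in $\Rep(\SL_n)$, with \eqref{eq:IH} coming from (co)associativity and \eqref{eq:commutation} reduced to the case $r=s=1$ and checked on basis vectors. Your reductions for \eqref{eq:id1b} and for general $r,s$ in \eqref{eq:commutation} via the remark on redundancy are legitimate once the other relations are established in $\Rep(\SL_n)$.

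Two places deserve comment. First, you fold \eqref{eq:bigon2} in with \eqref{eq:bigon1} as ``its tagged variant,'' but the mixed-orientation bigon is not $M_{k,l}\circ M'_{k,l}$: it is a composite of a cup, $M$, $M'$, and a cap, and so genuinely involves the pivotal structure. In the paper this is a separate and longer computation, requiring the explicit form of the cup map (which brings in the pivotal element $K_{2\rho}$ and hence the factors $q^{k(n-k)-2\ell(T,T^c)}$), before the sum collapses to $\qBinomial{n-k}{l}$ via Lemma~\ref{lem:binom}. Your sketch would go through, but you should expect substantially more bookkeeping here than for \eqref{eq:bigon1}.

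Second, for \eqref{eq:tag-migration2} the paper does \emph{not} unwind on basis vectors or invoke a Frobenius-algebra identity. Instead it observes that $\Hom(\Alt{k+l}{q}\bC_q^n\otimes(\Alt{k}{q}\bC_q^n)^*,\,(\Alt{n-l}{q}\bC_q^n)^*)$ is one-dimensional, so the two sides agree up to a scalar $c$; precomposing with a trivalent vertex and using the already-verified relations \eqref{eq:bigon1}, \eqref{eq:bigon2}, \eqref{eq:tag-migration} forces $c=1$. This is cleaner than a direct sign-chase and avoids having to set up and verify the Frobenius compatibility you allude to. Your approach could be made to work, but the paper's argument is both shorter and less error-prone.
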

\begin{proof}
By Lemma \ref{lem:slnmaps} we have a well defined map $\FSp(\SL_n) \rightarrow \Rep(\SL_n)$.  It remains to show that this maps factors through $\Sp(\SL_n)$, which means checking relations (\ref{eq:switch}) -- (\ref{eq:commutation}).

{\bf Relation (\ref{eq:bigon1}).} We need to compute $M_{k,l} \circ M'_{k,l} = \qBinomial{k+l}{k} {\rm{id}}$. For $S $ with $|S|=k+l$ we have
\begin{align*}
M_{k,l} \circ M'_{k,l}(x_S)
&= M_{k,l} \left( (-1)^{kl} \sum_{T \subset S} (-q)^{- \ell( S \smallsetminus T, T)} x_T \otimes x_{S \smallsetminus T} \right) \\
&= (-1)^{kl}\sum_{T \subset S} (-q)^{\ell(T, S \smallsetminus T)} (-q)^{- \ell( S \smallsetminus T, T)} x_S \displaybreak[1]\\
&= (-1)^{kl} (-q)^{-kl} \sum_{T \subset S} (-q)^{2\ell(T, S \smallsetminus T)} x_S \\
&= \qBinomial{k+l}{k} x_S
\end{align*}
using that $ \ell(T, S \smallsetminus T) + \ell( S \smallsetminus T, T) = |S \smallsetminus T||T| =kl$ and using Lemma \ref{lem:binom} to obtain the last equality.

{\bf Relation (\ref{eq:bigon2}).} The bigon on the left hand side of (\ref{eq:bigon2}) is the composition of a cup, two trivalent vertices and a cap.  Because $ \Gamma_n $ is defined as a pivotal functor it takes the cup and cap to the ``cup'' and ``cap'' in $ \Rep(\SL_n)$.  We will consider $ (\Alt{k}{q} \bC_q^n)^* $ to be the left dual of $\Alt{k}{q} \bC_q^n$ with dual basis $ x_T^* $ (here $ T $ ranges over $ k$-element subsets of $ \{1, \dots, n\} $).  Thus the ``cap'' map $$(\Alt{k}{q} \bC_q^n)^* \otimes \Alt{k}{q} \bC_q^n \rightarrow \bC(q)$$
is just given by $ x_T^* \otimes x_U \mapsto \delta_{T,U}$.

On the other hand the ``cup'' map is given by the canonical copairing followed by the inverse of the pivotal isomorphism.  The pivotal isomorphism in the category $ \Rep(\SL_n)$ is given by the element $ K_{2\rho} = K_1^{n-1} \dots K_{n-1}$.  Thus, we see that the ``cup'' map is given by
\begin{align*}
\bC(q) &\rightarrow (\Alt{k}{q} \bC_q^n)^* \otimes \Alt{k}{q} \bC_q^n \\
1 &\mapsto \sum_{|T| = k} q^{k(n-k) - 2\ell(T, T^c)} x_T^* \otimes x_T
\end{align*}
where we use that $ K_{2\rho}^{-1} x_T = q^{\ell(T^c, T) - \ell(T, T^c)} x_T= q^{k(n-k) - 2\ell(T, T^c)}x_T$.
 
Thus, for $S \subset \{1,\dots,n\}$ with $|S|=k$, the left hand side of (\ref{eq:bigon2}) acts on $x_S$ as follows:
\begin{align*}
x_S
&\mapsto q^{l(n-l)} \sum_{|T|=l} (-q)^{-2 \ell(T, T^c)} x_T^* \otimes x_T \otimes x_S \\
&\mapsto q^{l(n-l)} \sum_{|T|=l, S \cap T = \emptyset} (-q)^{\ell(T,S) - 2 \ell(T, T^c)} x_T^* \otimes x_{T \cup S} \displaybreak[1]\\
&\mapsto (-1)^{kl} q^{l(n-l)} \sum_{|T|=l, S \cap T = \emptyset} (-q)^{\ell(T,S) - 2 \ell(T, T^c)} \sum_{U \subset T \cup S} (-q)^{-\ell((T \cup S) \smallsetminus U,U)} x_T^* \otimes x_U \otimes x_{(T \cup S) \smallsetminus U} \\
&\mapsto (-1)^{kl} q^{l(n-l)} \sum_{|T|=l, S \cap T = \emptyset} (-q)^{\ell(T,S) - \ell(S,T) - 2 \ell(T, T^c) } x_S \displaybreak[1]\\
&= (-1)^{kl} q^{l(n-l)} \sum_{|T|=l, S \cap T = \emptyset} (-q)^{-\ell(T,S) - \ell(S,T) - 2 \ell(T,T^c \smallsetminus S)} x_S \displaybreak[1]\\
&= q^{l(n-l-k)} \sum_{T \subset S^c} q^{-2\ell(T, T^c \smallsetminus S)} x_S \\
&= \qBinomial{n-k}{l} x_S
\end{align*}
where we write $S^c$ and $T^c$ for the complements of $S$ and $T$ in $\{1, \dots, n\}$. The result follows.

{\bf Relation (\ref{eq:IH}).}  This follows immediately from the fact that $ \Alt{\bullet}{q}(\bC^n_q) $ forms an associative algebra (it is a quotient of a tensor algebra) and the arrow reversal follows from the fact that it forms a coassociative coalgebra (Lemma \ref{le:coalg}).

{\bf Relation \eqref{eq:tag-migration}.}
As this is a special case of \eqref{eq:IH}, this relation follows.

{\bf Relation \eqref{eq:tag-migration2}.}
Note that $ \Hom(\Alt{k+l}{q} \bC_q^n \otimes (\Alt{k}{q} \bC_q^n)^*, (\Alt{n-l}{q} \bC_q^n)^*) $ is 1-dimensional and that both sides of \eqref{eq:tag-migration2} define non-zero elements of this space.  Thus there exists a scalar $ c \in \bC(q) $ such that 
\begin{equation*}
\begin{tikzpicture}[baseline=8]
\coordinate (z1) at (0,0);
\coordinate (z2) at (1,0);
\coordinate (c) at (0.5,0.5);
\coordinate (ce) at (0.5,1);
\coordinate (e) at (0.5,1.45);
\draw[mid>] (z1) node[below] {$k{+}l$} -- (c);
\draw[mid<] (z2) node[below] {$k$} -- (c);
\draw[mid>] (c) -- node[right] {$l$} (ce);
\draw[mid<] (ce) -- (e) node[above] {$n{-}l$};
\draw (ce) -- +(-0.2,0);
\end{tikzpicture}
= c \cdot \,
\begin{tikzpicture}[baseline=8,x=-1cm]
\coordinate (z1) at (0,0);
\coordinate (z2) at (1.5,0);
\coordinate (cze) at (1.25,0.333);
\coordinate (c) at (0.75,1);
\coordinate (e) at (0.75,1.45);
\draw[mid<] (z1) node[below] {$k$} -- (c);
\draw[mid>] (z2) node[below] {$k{+}l$} -- (cze);
\draw[mid<] (cze) -- node[left] {$n{-}k{-}l$} (c);
\draw (cze) -- + (0.15,0.15);
\draw[mid<] (c) -- (e) node[above] {$n{-}l$};
\end{tikzpicture}
\end{equation*}
where (abusing notation) the above diagrams represent their images under $\Gamma_n$ in $ \Rep(\SL_n) $.  We wish to show that $ c = 1 $.

Now, we precompose both sides with an upward pointing trivalent vertex, to obtain
\begin{equation*}
\begin{tikzpicture}[baseline=8]
\coordinate (z1) at (0,0);
\coordinate (z2) at (1,0);
\coordinate (c) at (0.5,0.5);
\coordinate (ce) at (0.5,1);
\coordinate (e) at (0.5,1.45);
\coordinate (b) at (0.5,-0.5);
\coordinate (a) at (0.5,-1);
\draw[mid>] (a) -- node[right] {$l$} (b);
\draw[mid>] (b) to[out=150,in=-150] node[left] {$k{+}l$} (c);
\draw[mid<] (b) to[out=30,in=-30] node[right] {$k$} (c);
\draw[mid>] (c) -- node[right] {$l$} (ce);
\draw[mid<] (ce) -- (e) node[above] {$n{-}l$};
\draw (ce) -- +(-0.2,0);
\end{tikzpicture}
= c \cdot \,
\begin{tikzpicture}[baseline=8,x=-1cm]
\coordinate (z1) at (0,0);
\coordinate (z2) at (1.5,0);
\coordinate (cze) at (1,0.333);
\coordinate (c) at (0.5,1);
\coordinate (e) at (0.5,1.45);
\coordinate (b) at (0.5,-0.5);
\coordinate (a) at (0.5,-1);
\draw[mid>] (a) -- node[right] {$l$} (b);
\draw[mid<] (b) to[out=30,in=-30] node[right] {$k$} (c);
\draw[mid>] (b) to[out=150,in=-90] node[left] {$k{+}l$} (cze);
\draw[mid<] (cze) to[out=90,in=210] node[left] {$n{-}k{-}l$} (c);
\draw (cze) -- + (0.2,0);
\draw[mid<] (c) -- (e) node[above] {$n{-}l$};
\end{tikzpicture}
\end{equation*}
Now the left hand side can be simplified using the second bigon relation \eqref{eq:bigon2} (which has been already proven to hold in $\Rep(\SL_n)$) to obtain $ \qBinomial{n-l}{k} D_l$ (recall that $\Gamma_n $ takes a tag to $ D_l$).

The right hand side can be simplified using the first tag migration relation \eqref{eq:tag-migration} followed by the first bigon relation \eqref{eq:bigon1} to obtain $c \qBinomial{n-l}{k} D_l$.

Thus we conclude that $ \qBinomial{n-l}{k}D_l = c \qBinomial{n-l}{k}D_l $ and thus $ c = 1 $ as desired.

{\bf Relation (\ref{eq:commutation}).}
We will prove (\ref{eq:commutation}) in the case when $r=s=1$ (this suffices by the second remark in \S \ref{sec:spider}), which amounts to the following diagram.
\begin{equation}
\begin{tikzpicture}[baseline=40]
\laddercoordinates{1}{2}
\node[left] at (l00) {$k$};
\node[right] at (l10) {$l$};
\ladderEn{0}{0}{$k{-}1$}{$l{+}1$}{1}
\ladderFn{0}{1}{$k$}{$l$}{1}
\end{tikzpicture}
=
\begin{tikzpicture}[baseline=40]
\laddercoordinates{1}{2}
\node[left] at (l00) {$k$};
\node[right] at (l10) {$l$};
\ladderFn{0}{0}{$k{+}1$}{$l{-}1$}{1}
\ladderEn{0}{1}{$k$}{$l$}{1}
\end{tikzpicture}
+
\qi{k-l}
\begin{tikzpicture}[baseline=40]
\laddercoordinates{1}{2}
\ladderIn{0}{0}{2}
\ladderIn{1}{0}{2}
\node[left] at (l02) {$k$};
\node[right] at (l12) {$l$};
\end{tikzpicture}
\label{eq:commute}
\end{equation}
It suffices to check this on $x_S \otimes x_T$ for some arbitrary $S,T \subset \{1, \dots, n\}$ with $|S|=k$ and $|T|=l$. The left hand side of (\ref{eq:commute}) acts as follows:
\begin{align*}
x_S \otimes x_T
&\mapsto (-1)^{k-1} \sum_{r \in S} (-q)^{-\ell(r,S \smallsetminus r)} x_{S \smallsetminus r} \otimes x_r \otimes x_T \\
&\mapsto (-1)^{k-1} \sum_{r \in S \smallsetminus T} (-q)^{-\ell(r,S \smallsetminus r) + \ell(r,T)} x_{S \smallsetminus r} \otimes x_{T \cup r} \displaybreak[1]\\
&\mapsto (-1)^{k+l-1} \sum_{r \in S \smallsetminus T} \sum_{r' \in T \cup r} (-q)^{-\ell(r,S \smallsetminus r) + \ell(r,T) - \ell((T \cup r) \setminus r', r')} x_{S \smallsetminus r} \otimes x_{r'} \otimes x_{(T \cup r) \smallsetminus r'} \\
&\mapsto (-1)^{k+l-1} \sum_{r \in S \smallsetminus T} \sum_{r' \in (T \smallsetminus S) \cup r} (-q)^{-\ell(r,S \smallsetminus r) + \ell(r,T) - \ell((T \cup r) \setminus r', r') + \ell(S \smallsetminus r, r')} x_{(S \smallsetminus r) \cup r'} \otimes x_{(T \cup r) \smallsetminus r'}.
\end{align*}
We can rewrite this sum depending on whether $r=r'$ or $r \ne r'$ in the latter case we get
\begin{equation}\label{eq:local}
(-1)^{k+l-1} \sum_{r \in S \smallsetminus T} \sum_{r' \in T \smallsetminus S} (-q)^{-\ell(r,S \smallsetminus r) + \ell(r,T) - \ell(T \smallsetminus r', r') + \ell(S,r')} x_{(S \smallsetminus r) \cup r'} \otimes x_{(T \cup r) \smallsetminus r'}.
\end{equation}
where, for convenience, we assumed $r > r'$. In the former case we get
\begin{align*}
& (-1)^{k+l-1} \sum_{r \in S \smallsetminus T} (-q)^{-\ell(r,S \smallsetminus r) + \ell(r,T) - \ell(T,r) + \ell(S \smallsetminus r,r)} x_S \otimes x_T \\
&= (-1)^{k+l-1} \sum_{r \in S \smallsetminus T} (-q)^{-2\ell(r,S \smallsetminus r) + 2\ell(r,T) - l + k - 1} x_S \otimes x_T \\
& = q^{-l+k-1} \sum_{r \in S \smallsetminus T} q^{2 (\ell(r,T \smallsetminus S) -  \ell(r, S \smallsetminus (T \sqcup \{r\}))) } x_S \otimes x_T
\end{align*}

On the other hand, the first term on the right hand side of (\ref{eq:commute}) acts as
\begin{align*}
x_S \otimes x_T
&\mapsto (-1)^{l-1} \sum_{r' \in T} (-q)^{-\ell(T \smallsetminus r', r')} x_S \otimes x_{r'} \otimes x_{T \smallsetminus r'} \\
&\mapsto (-1)^{l-1} \sum_{r' \in T \smallsetminus S} (-q)^{-\ell(T \smallsetminus r'), r') + \ell(S,r')} x_{S \cup r'} \otimes x_{T \smallsetminus r'} \\
&\mapsto (-1)^{k+l-1} \sum_{r' \in T \smallsetminus S} \sum_{r \in (S \smallsetminus T) \cup r'} (-q)^{\scalebox{0.7}{$-\ell(T {\smallsetminus} r', r') {+} \ell(S,r') {-} \ell(r, (S {\cup} r') {\smallsetminus} r)$}} x_{(S \cup r') \smallsetminus r} \otimes x_r \otimes x_{T \smallsetminus r'} \\
&\mapsto (-1)^{k+l-1} \sum_{r' \in T \smallsetminus S} \sum_{r \in (S \smallsetminus T) \cup r'} (-q)^{\scalebox{0.7}{$-\ell(T {\smallsetminus} r', r') {+} \ell(S,r') {-} \ell(r, (S {\cup} r') {\smallsetminus} r) {+} \ell(r, T {\smallsetminus} r')$}} x_{(S \cup r') \smallsetminus r} \otimes x_{(T \smallsetminus r') \cup r}
\end{align*}
Again, we have two cases, depending on whether $r = r'$ or $r \ne r'$. In the latter case we get the same expression as in (\ref{eq:local}). In the former case we end up with
\begin{align*}
& (-1)^{k+l-1} \sum_{r \in T \smallsetminus S} (-q)^{-\ell(T \smallsetminus r,r) + \ell(S,r) - \ell(r,S) + \ell(r,T \smallsetminus r)} x_S \otimes x_T \\
& \quad = q^{k-l-1} \sum_{r \in T \smallsetminus S} q^{2( - \ell(r,S) + \ell(r, T \smallsetminus r) +1) } x_S \otimes x_T
\end{align*}

Thus it suffices to prove the following: for any $S,T$ of size $k, l $ respectively,
\begin{equation} \label{toprove}
 \sum_{r \in S} q^{2(\ell(r, T) - \ell(r, S \smallsetminus r))} - \sum_{r \in T} q^{2 (-\ell(r, S) + \ell(r, T \smallsetminus r) + 1)} = q^{l-k+1} [k-l]_q.
\end{equation}
Since $\ell(r, T \smallsetminus r) = \ell(r, (T \smallsetminus S) \smallsetminus r) + \ell(r, S \cap T)$ and $\ell(r,S) = \ell(r, S \smallsetminus T) + \ell(r, S \cap T)$ we find that
$$\ell(r, T \smallsetminus r) - \ell(r,S) = \ell(r, (T \smallsetminus S) \smallsetminus r) - \ell(r, S \smallsetminus T).$$
Thus, in proving (\ref{toprove}) we can assume that $S$ and $T$ are disjoint.

We proceed by induction on $\min(k,l)$.  The base case of our induction will be when $ k = 0 $ or $ l = 0 $.  In this case, it is easy to see that \eqref{toprove} holds. Now assume that $k,l > 0 $.  Consider the elements of $ S \cup T $ arranged in order.  We can find a pair of consecutive entries one from $ S $ and one from $ T $.  More precisely, there exists $ s \in S $ and $ t \in T $ such that no element of $ S $ or $ T $ lies in between $ s $ and $ t $.   Suppose that $ s < t$ (if $ s> t$ then the argument is similar).  Then
$$ \ell(s,T) = \ell(t, T \smallsetminus t) + 1 \ \ \ \text{ and } \ \ \  \ell(s, S \smallsetminus s) = \ell(t, S) $$
and thus the left hand side of \eqref{toprove} is unchanged by the removal of $ s $ from $ S $ and $ t $ from $ T $ and so by induction \eqref{toprove} holds.

Finally, relation (\ref{eq:switch}) is straightforward while the proof of relation (\ref{eq:id1b}) is similar to that of (\ref{eq:bigon1}) above and we omit it. 
\end{proof}

\begin{lem}\label{lem:binom}
For any $n \ge k \in {\mathbb N}$ we have
\begin{equation}\label{eq:binom}
\qBinomial{n}{k} = q^{-k(n-k)} \sum_{S \subset T, |S|=k} q^{2 \ell(S, T \smallsetminus S)} = q^{k(n-k)} \sum_{S \subset T, |S|=k} q^{-2 \ell(S, T \smallsetminus S)}
\end{equation}
where $T = \{1, \dots, n\}$.
\end{lem}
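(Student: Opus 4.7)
The plan is to reduce the identity to the classical generating function expansion of the $q$-binomial coefficient over Young diagrams in a $k \times (n-k)$ box, via a standard bijection that translates the statistic $\ell(S, T \smallsetminus S)$ into the area of the diagram.

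First, I would parametrize the $k$-subsets of $T = \{1, \dots, n\}$. Writing $S = \{s_1 < s_2 < \dots < s_k\}$, set $\mu_a := s_a - a$; this gives a bijection between $k$-subsets $S$ and non-decreasing sequences $0 \le \mu_1 \le \mu_2 \le \dots \le \mu_k \le n-k$. Counting, for each $s_a$, the elements of $T \smallsetminus S$ lying above $s_a$ (there are $n - s_a - (k-a) = n - k - \mu_a$ of them), I get
$$\ell(S, T \smallsetminus S) = \sum_{a=1}^{k}(n - k - \mu_a) = k(n-k) - \sum_a \mu_a.$$
Setting $\lambda_a := n - k - \mu_{k-a+1}$ gives a bijection between such sequences and partitions $\lambda \subset k \times (n-k)$ with $|\lambda| = \ell(S, T \smallsetminus S)$.

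Second, I would invoke the classical identity
$$\binom{n}{k}_Q := \frac{[n]_Q!}{[k]_Q!\,[n-k]_Q!} = \sum_{\lambda \subset k \times (n-k)} Q^{|\lambda|},$$
where $[m]_Q := 1 + Q + \dots + Q^{m-1}$; this is provable by induction on $n$ via the Pascal-like recursion $\binom{n}{k}_Q = \binom{n-1}{k}_Q + Q^{n-k}\binom{n-1}{k-1}_Q$. Specializing $Q = q^2$ and using $[m]_q = q^{-(m-1)}[m]_{q^2}$, one checks that $\qBinomial{n}{k} = q^{-k(n-k)} \binom{n}{k}_{q^2}$. Combining with the bijection above gives the first equality of \eqref{eq:binom}.

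For the second equality, I would observe that the symmetric formula $[n]_q = \frac{q^n - q^{-n}}{q - q^{-1}}$ makes $\qBinomial{n}{k}$ invariant under $q \mapsto q^{-1}$, so the second equality is obtained from the first by this substitution. (Equivalently, the involution $\lambda \mapsto \lambda^c$, taking a diagram to its complement in the $k \times (n-k)$ box, sends $|\lambda|$ to $k(n-k) - |\lambda|$ and so directly interchanges the two sums.) The only real work is the power-of-$q$ bookkeeping between symmetric and asymmetric conventions and verifying the bijection; there is no conceptual obstacle.
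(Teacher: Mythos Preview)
Your proof is correct, but it takes a different route from the paper's. The paper argues by induction on $n$: it uses the recursion
\[
\qBinomial{n}{k} = q^k \qBinomial{n-1}{k} + q^{-n+k} \qBinomial{n-1}{k-1}
\]
and then checks, by splitting the sum on the right-hand side according to whether $n \in S$, that the sum satisfies the same recursion. Your approach instead sets up a bijection between $k$-subsets $S$ and partitions $\lambda$ in a $k \times (n-k)$ box sending $\ell(S, T \smallsetminus S)$ to $|\lambda|$, then invokes the classical fact that $\binom{n}{k}_Q$ is the generating function for such partitions and translates between the symmetric and asymmetric conventions via $\qBinomial{n}{k} = q^{-k(n-k)} \binom{n}{k}_{q^2}$. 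Your argument is more conceptual and situates the identity in familiar combinatorics; the paper's argument is more self-contained and avoids the convention-juggling. Both ultimately rest on the same Pascal-type recursion (you need it to justify the classical identity), so the difference is largely one of packaging.
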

\begin{proof}
We prove the first equality as the second follows in the same way. The proof is by induction on $n$. It is an elementary exercise to show that the left hand side of \eqref{eq:binom} satisfies the recursion relation
$$\qBinomial{n}{k} = q^k \qBinomial{n-1}{k} + q^{-n+k} \qBinomial{n-1}{k-1}.$$
It remains to show that the right hand side of (\ref{eq:binom}) also satisfies this recursion. To do this we break up the sum into two depending on whether $n \in S$. We have
$$q^{-k(n-k)} \sum_{\substack{S \subset T \\ |S|=k, n \not\in S}} q^{2 \ell(S, T \smallsetminus S)} = q^{-k(n-k)} q^{2k} \sum_{\substack{S \subset T \smallsetminus n \\ |S|=k} q^{2\ell(S,(T\setminus n)\setminus S)}} = q^k \qBinomial{n-1}{k}$$
where the last equality follows by induction. Similarly one finds that
$$q^{-k(n-k)} \sum_{\substack{S \subset T \\ |S|=k, n \in S}} q^{2 \ell(S, T \smallsetminus S)} = q^{-k(n-k)} \sum_{\substack{S \smallsetminus n \subset T \smallsetminus n \\ |S \smallsetminus n| = k-1}} q^{2 \ell(S \smallsetminus n, T \smallsetminus S)} = q^{-n+k} \qBinomial{n-1}{k-1}$$
The result follows by induction.
\end{proof}

\subsection{The main result}\label{sec:main}

\begin{thm}\label{thm:main}
The functor $\Gamma_n: \Sp(\SL_n) \rightarrow \Rep(\SL_n)$ is an equivalence of pivotal categories.
\end{thm}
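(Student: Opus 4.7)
The plan is to deduce the equivalence from the fullness of the functor $\Psi_m^n : \dU(\gl_m) \to \Sp(\SL_n)$ (which sends algebra generators to ladders as in \eqref{eq:5}) together with Theorem \ref{th:functorfullyfaithful}, which asserts that the composite
$$\Phi_m^n := \Gamma_n \circ \Psi_m^n : \dU^n(\gl_m) \to \Rep(\SL_n)$$
is fully faithful. The central diagram is
$$\dU^n(\gl_m) \xrightarrow{\;\Psi_m^n\;} \Sp(\SL_n) \xrightarrow{\;\Gamma_n\;} \Rep(\SL_n),$$
and all three properties (essential surjectivity, fullness, faithfulness) of $\Gamma_n$ will follow by tracing morphisms around this diagram, after choosing $m$ large enough.

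Essential surjectivity is immediate: any object of $\Rep(\SL_n)$ is, by definition, (isomorphic to) a tensor product $\Alt{k_1}{q}\bC_q^n \otimes \cdots \otimes \Alt{k_m}{q}\bC_q^n$, which is $\Gamma_n$ applied to the object $(k_1^+,\ldots,k_m^+)$ of $\Sp(\SL_n)$. For fullness, take any morphism $\phi$ in $\Rep(\SL_n)$ between two such tensor products with the same total degree $K$; by the surjectivity part of quantum skew Howe duality (which is exactly the fullness of $\Phi_m^n$ on weight spaces, Theorem \ref{th:functorfullyfaithful} for $m$ large enough to accommodate both sources and targets), $\phi = \Phi_m^n(g)$ for some $g \in \dU^n(\gl_m)$. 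Then $\phi = \Gamma_n(\Psi_m^n(g))$, exhibiting $\phi$ in the image of $\Gamma_n$. Cases where source or target involve dual objects are reduced to this case by bending strands using the pivotal structure.

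The crucial input for faithfulness is that $\Psi_m^n$ itself is full (for $m$ sufficiently large): every web diagram in $\Sp(\SL_n)$ is, up to the spider relations, a $\bC(q)$-linear combination of ladders. This is the step I expect to be the main obstacle. The strategy is to exploit relation \eqref{eq:IH} ($I=H$) to systematically reshape an arbitrary trivalent web, together with tag migration \eqref{eq:tag-migration}, \eqref{eq:tag-migration2} to push all tags to the left of the diagram, so that one is left with a diagram whose interior strands all flow strictly from left to right between $m$ parallel uprights; such a diagram is visibly a ladder. The tag pushing is responsible for turning downward-pointing strands into upward-pointing strands at the cost of altering labels via $k \mapsto n - k$, which is why we need $m$ larger than the number of strands. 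One verifies directly, using \eqref{eq:switch}, \eqref{eq:bigon1}, \eqref{eq:bigon2}, \eqref{eq:id1b}, and \eqref{eq:commutation}, that the ladder rewriting is well-defined modulo the spider relations.

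Granted fullness of $\Psi_m^n$, faithfulness of $\Gamma_n$ follows cleanly: if $f$ is a morphism in $\Sp(\SL_n)$ with $\Gamma_n(f)=0$, write $f = \Psi_m^n(g)$ with $g \in \dU(\gl_m)$. Then $\Phi_m^n(g) = \Gamma_n(\Psi_m^n(g)) = \Gamma_n(f) = 0$. The description of the kernel of skew Howe duality established in Section \ref{sec:fully-faithful} identifies this kernel with the ideal generated by non-$n$-bounded weight idempotents, i.e.\ $g$ maps to zero in $\dU^n(\gl_m)$; but $\Psi_m^n$ was shown to factor through $\dU^n(\gl_m)$, so $f = \Psi_m^n(g) = 0$ in $\Sp(\SL_n)$. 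Combined with essential surjectivity and fullness, this establishes the equivalence. The pivotal structure on $\Gamma_n$ has already been verified in Theorem \ref{thm:gamma}, so the equivalence is automatically an equivalence of pivotal categories.
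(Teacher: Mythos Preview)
Your proposal is correct and follows essentially the same architecture as the paper's proof: reduce everything to the commutative triangle $\dU^n(\gl_m) \xrightarrow{\Psi_m^n} \Sp(\SL_n) \xrightarrow{\Gamma_n} \Rep(\SL_n)$, use fully-faithfulness of $\Phi_m^n = \Gamma_n \circ \Psi_m^n$ (Theorem~\ref{th:functorfullyfaithful}) for both fullness and faithfulness of $\Gamma_n$, and isolate the laddering statement (every upward-oriented web is in the image of some $\Psi_m^n$) as the one nontrivial spider-side ingredient. You also correctly flag the reduction to upward-oriented objects via tags.

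The one place your sketch diverges from the paper is the mechanism for the laddering step itself. You propose to use $I=H$ and tag migration to ``push tags to the left'' and reshape the web; this is plausible-sounding but not how the paper actually does it, and it is not clear your sketch would handle cups, caps, and arbitrarily oriented trivalent vertices without further ideas. The paper's proof of Theorem~\ref{thm:laddering} instead (i) replaces each internal tag by an $n$-labelled strand running out to the boundary, (ii) adds $0$-labelled strands to balance, (iii) puts the resulting diagram in Morse position with respect to the \emph{horizontal} coordinate, and (iv) superimposes a vertical $0$- or $n$-strand to the right of each elementary Morse piece and makes a local replacement that turns that piece into a fragment of a ladder upright with half-rungs. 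The only spider relations needed to justify these local replacements are tag cancellation \eqref{eq:cancel-tags} and tag migration \eqref{eq:tag-migration}, \eqref{eq:tag-migration2}; the relations \eqref{eq:IH}, \eqref{eq:id1b}, \eqref{eq:commutation} play no role here. So your instinct that this is ``the main obstacle'' is right, but the actual argument is a Morse-position trick with auxiliary $0$/$n$ strands rather than algebraic rewriting via $I=H$.
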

\begin{proof}
We will use the following commutative diagram
\begin{equation}\label{diag:main}
\xymatrix{
\Lad_m^n \ar[r] \ar[d] & \dU^n(\gl_m) \ar[dr]^{\Phi_m^n} \ar[d]_{\Psi_m^n} & \\
\FSp(\SL_n) \ar[r] & \Sp(\SL_n) \ar[r]^{\Gamma_n} & \Rep(\SL_n) \\
}
\end{equation}
where the three categories in the bottom row were defined in section \ref{sec:diagrams}, $\Phi$ and $\dU^n(\gl_m)$ are defined in section \ref{sec:phi} while $\Lad_m^n$ and $\Psi$ are defined in section \ref{sec:ladders}.

We now explain why $\Gamma_n$ is an equivalence of categories. Since it is clearly an isomorphism on objects we must show that it is fully faithful.

Surjectivity (fullness) of $\Gamma_n$ on $\Hom$ spaces follows from the fullness of the functor $ \Phi^n_m $, which is proven in Theorem \ref{th:functorfullyfaithful}\footnote{The fullness of $\Gamma_n$ was proven in Proposition 3.5.8 of \cite{0704.1503} using Schur-Weyl duality instead of skew Howe duality, but the argument is essentially the same.}.  More precisely, given any two objects $ V, W $ in $\Rep(\SL_n) $ we can find some $m$ such that there exist $ n$-bounded weights $ \ul{k}, \ul{l} $ of $ U_q(\gl_m)$ such that $\Phi^n_m(\ul{k}) = V$ and $\Phi^n_m(\ul{l}) = W $. The fullness of $ \Phi^n_m $ tells us that the map
$$ \Phi^n_m : \one_{\ul{l}} \dU(\gl_m) \one_{\ul{k}} \rightarrow \Hom_{U_q(\sl_n)}(V, W) $$
is surjective (i.e. all the morphisms come from ladders with $m$ uprights). The commutativity of the right triangle (established in Proposition \ref{prop:commutes}) shows us that these morphisms all come from webs in $ \Sp(\SL_n) $.

Next we show that $ \Gamma_n $ is injective (faithful) on $ \Hom$ spaces. It suffices to do this on the $\Hom$ spaces between objects of the form $(k_1^+,\ldots,k_m^+)$ (that is, objects which are all oriented upwards), because every object is isomorphic (via a morphism built solely out of tags) to such an object.
Let $w$ be a morphism in $ \Sp(\SL_n)$ between upwards oriented objects such that $\Gamma_n(w)=0$.  By Theorem \ref{thm:laddering} and the commutativity of the left square (immediate from the definition of $\Psi$ in Proposition \ref{prop:psi}), we can find some $ m $ and some $\tilde{w} \in \dU^n(\gl_m) $ such that $ \Psi^n_m(\tilde{w}) = w $ by finding a ladder $ \tilde{w} $ equivalent to the web $ w $. Then by the commutativity of the right triangle, we see that $ \Phi^n_m(\tilde{w}) = 0 $.  However, by Theorem \ref{th:functorfullyfaithful}, $\Phi_m^n$ is faithful which means $\tilde{w}=0$ and hence $w=0$ as desired.
\end{proof}

\section{The functor \texorpdfstring{$\Phi_m^n:\dU(\gl_m) \rightarrow \Rep(\SL_n)$}{Phi}}\label{sec:phi}

\subsection{\texorpdfstring{$ U_q(\gl_m)$}{U\_q gl\_m} and its idempotent form \texorpdfstring{$\dU(\gl_m)$}{}}\label{sec:idemform}

We begin with the definition of $ U_q(\gl_m)$.  It is defined much the same way as $ U_q(\sl_m) $, except that we enlarge the ``torus'' by having invertible group-like generators $ L_1, \dots, L_n $ with $ K_i = L_i L_{i+1}^{-1} $.  In this way the weight spaces of $ U_q(\gl_m) $ are labelled by $ \bZ^m $.

We will also use Lusztig's idempotent form $ \dU(\gl_m) $. We regard $\dU(\gl_m)$ as a $\bC(q)$-linear category with objects $ \ul{k} = (k_1, \dots, k_m) \in \mathbb Z^m $.  The identity morphism of the object $\ul{k}$ is denoted $\one_\ul{k}$  and we write $ \one_\ul{l} \dU(\gl_m) \one_\ul{k}$ for the space of morphisms.

The morphisms are generated by $E_i^{(r)} \one_{\ul{k}} \in \one_{\ul{k} + r \alpha_i} \dU(\gl_m) \one_{\ul{k}} $ and $ F_i^{(r)} \one_{\ul{k}} \in \one_{\ul{k} - r \alpha_i} \dU(\gl_m) \one_{\ul{k}}$, for $i=1, \dots, m-1$ and $r \in {\mathbb N}$ (here $\alpha_i = (0,\dots,0,1,-1,0,\dots,0)$ where the $1$ appears in position $i$). Notice that $\Hom(\ul{k}, \ul{l}) = 0 $ unless $\sum k_i = \sum l_i$. When the specific weight space is not important (or is obvious from the context) we will write $E_i$ instead of $E_i \one_{\ul{k}}$, $F_i$ instead of $F_i \one_{\ul{k}}$ etc.

These morphisms satisfy the following set of relations:
\begin{align}
\label{rel:1}
E_i^{(r)} F_i^{(s)} \one_{\ul{k}} &= \sum_t \qBinomial{\la \ul{k}, \alpha_i \ra + r - s}{t} F_i^{(s-t)} E_i^{(r-t)} \one_{\ul{k}} & & \\
\label{rel:2}  E_i^{(r)} F_j^{(s)} \one_{\ul{k}} &= F_j^{(s)} E_i^{(r)} \one_{\ul{k}}, & & \text{ if $i \ne j$} \displaybreak[1]\\
\label{rel:3} E_iE_jE_i \one_{\ul{k}} &= (E_i^{(2)} E_j + E_j E_i^{(2)}) \one_{\ul{k}}, && \text{ if $ |i - j| = 1 $, and likewise with $F$'s,} \displaybreak[1]\\
\label{rel:4} E_i^{(r)} E_j^{(s)} \one_{\ul{k}} &= E_j^{(s)} E_i^{(r)} \one_{\ul{k}} &&\text{ if $ |i-j| > 1$, and likewise with $F$'s,} \\
\label{rel:5} E_i^{(s)} E_i^{(r)} &= \qBinomial{r+s}{r} E_i^{(r+s)} && \text{ and likewise with $F$'s.}
\end{align}
Here $\la \cdot, \cdot \ra$ is the standard inner product on $\mathbb Z^m$.

\begin{rem}
Since we work over $ \bC(q) $, we do not need the generators $ E_i^{(r)}, F_i^{(r)} $ for $ r > 1 $ as
$$E_i^{(r)} = \frac{E_i^r}{[r]_q \dots [1]_q} \ \ \text{ and } \ \ F_i^{(r)} = \frac{F_i^r}{[r]_q \dots [1]_q}.$$
We decided to list these extra generators since they appear naturally from the webs perspective.  Moreover, these extra generators are needed for Lusztig's $\mathbb{Z}[q,q^{-1}] $ form of the quantum group, though additional ``Serre-like'' relations (similar to \ref{rel:3}) are needed in that setting.
\end{rem}

A representation $ V $ of $ U_q(\gl_m) $ where the $ L_i $ act semisimply with all eigenvalues powers of $ q $ is equivalent to a functor from $ \dU(\gl_m) $ to the category of vector spaces which takes the object $ \ul{k} $ to the weight space $ V_{\ul{k}} := \{ v \in V : L_i v = q^{k_i} v \text{ for all } i \} $.

We will be interested in a certain truncation of $ \dU(\gl_m) $.  We say that a weight $ \ul{k} $ is an $n$-\textbf{bounded} if $ 0 \le k_i \le n $ for all $ i$.  We denote by $\dU^n(\gl_m)$ the quotient of $\dU(\gl_m)$ where we set to zero all objects which are not $n$-bounded. In other words, we quotient by the 2-sided ideal of morphisms generated by all $ \one_{\ul{k}} $ such that $ \ul{k} $ is not $ n$-bounded.

\subsection{Quantum skew Howe duality}\label{sec:quantumskew}

The vector space $\Alt{\bullet}{}(\mathbb C^n \otimes \mathbb C^m)$ carries commuting actions of $U(\sl_n)$ and $U(\gl_m)$.

\begin{thm}\mbox{}
The usual skew Howe duality \cite{MR986027,MR1321638} can be summarized as follows.
\begin{enumerate}
\item There is an isomorphism of $ U(\sl_n) $ representations
\begin{equation}
 \Alt{\bullet}{}(\mathbb C^n \otimes \mathbb C^m) \cong \left( \Alt{\bullet}{} \bC^n \right)^{\otimes m}
 \end{equation}
under which the $ \ul{k} $ weight space for the action of $ U(\gl_m) $ on the left hand side is identified with $\Alt{k_1}{} \mathbb C^n \otimes \cdots \otimes \Alt{k_m}{} \mathbb C^n$.
\item For each $ K $, the actions of $ U(\gl_m) $ and $ U(\sl_n) $ on $ \Alt{K}{}(\bC^n \otimes \bC^m) $ generate each other's commutant.
\item As a representation of $ U(\gl_m) \otimes U(\sl_n)$, we have a decomposition
$$ \Alt{\bullet}{} (\mathbb C^n \otimes \mathbb C^m) = \bigoplus_{\mu} V(\mu^t) \otimes V(\mu) $$
where $\mu$ varies over all $n$-bounded weights of $U(\gl_m)$. Here $\mu^t$ is the transpose of $\mu$, regarded as a weight of $U(\sl_n)$.
\end{enumerate}
\end{thm}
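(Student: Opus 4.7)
The plan is to establish parts (1)--(3) in turn, following the standard Howe duality argument \cite{MR986027,MR1321638}. Part (1) will be formal, while (2) and (3) will rest on a character computation together with the double centralizer theorem. For part (1), decompose $\bC^n \otimes \bC^m = \bigoplus_{j=1}^m \bC^n \otimes e_j$ using the standard basis $\{e_j\}$ of $\bC^m$, and iterate the canonical graded-algebra isomorphism $\Alt{\bullet}{}(V \oplus W) \cong \Alt{\bullet}{}(V) \otimes \Alt{\bullet}{}(W)$ to obtain $\Alt{\bullet}{}(\bC^n \otimes \bC^m) \cong \bigotimes_{j=1}^m \Alt{\bullet}{}\, \bC^n$. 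This isomorphism is $U(\sl_n)$-equivariant because $U(\sl_n)$ acts diagonally on the summands and then on the tensor product via the iterated coproduct. For the weight identification, observe that $L_j \in U(\gl_m)$ acts as the identity on $\bC^n \otimes e_j$ and trivially on the other summands, so on $\Alt{k_1}{}\bC^n \otimes \cdots \otimes \Alt{k_m}{}\bC^n$ its eigenvalue equals $k_j$, identifying this tensor product with the $\ul{k}$-weight space.

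For part (3), the key input is the bigraded character calculation with respect to the maximal tori of $\GL_n$ and $\GL_m$:
\[
\operatorname{ch} \Alt{\bullet}{}(\bC^n \otimes \bC^m) \;=\; \prod_{i=1}^n \prod_{j=1}^m (1 + x_i y_j) \;=\; \sum_\mu s_{\mu^t}(x_1,\dots,x_n)\, s_\mu(y_1,\dots,y_m),
\]
where the second equality is the dual Cauchy identity and the sum runs over all partitions $\mu$. The Schur polynomial $s_{\mu^t}$ in $n$ variables vanishes unless $\mu^t$ has at most $n$ parts, i.e.\ $\mu_1 \le n$, which is precisely the $n$-boundedness condition from section \ref{sec:idemform}; similarly $s_\mu(y_1,\dots,y_m)$ is nonzero iff $\mu$ has at most $m$ parts, which is automatic for dominant polynomial weights of $\gl_m$. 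Since $\Alt{\bullet}{}(\bC^n \otimes \bC^m)$ is a polynomial representation of $\GL_n \times \GL_m$ it is completely reducible, and the character identity therefore lifts to the claimed isotypic decomposition as a $U(\gl_m) \otimes U(\sl_n)$-bimodule. Restricting from $\GL_n$ to $\sl_n$ does not collapse any isomorphism classes appearing in a fixed exterior degree $K = |\mu|$, since the $\GL_n$-partition $\mu^t$ is recovered from its $\sl_n$-highest weight together with the total degree $K$.

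Part (2) is then an immediate consequence of (3) via the double centralizer theorem: a multiplicity-free bimodule decomposition $\bigoplus_\mu V(\mu^t) \otimes V(\mu)$ with pairwise non-isomorphic irreducibles on each factor forces the images of $U(\gl_m)$ and $U(\sl_n)$ in $\operatorname{End}(\Alt{K}{}(\bC^n \otimes \bC^m))$ to be mutual commutants. The main subtlety is the character and semisimplicity step in (3): one must verify that $\Alt{\bullet}{}(\bC^n \otimes \bC^m)$ is genuinely a polynomial $\GL_n \times \GL_m$-module (so that complete reducibility applies) and then carefully match the Cauchy summation index with the paper's $n$-bounded convention for $\gl_m$-weights.
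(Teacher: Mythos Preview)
The paper does not actually prove this theorem; it is stated as a summary of classical results with citations to Howe \cite{MR986027,MR1321638}, and the paper moves on immediately to the quantum version (Theorem~\ref{th:qSkewHowe}). Your proposal is a correct and self-contained proof of the classical statement along entirely standard lines: the graded-algebra isomorphism $\Alt{\bullet}{}(V\oplus W)\cong \Alt{\bullet}{}V\otimes\Alt{\bullet}{}W$ for part~(1), the dual Cauchy identity $\prod_{i,j}(1+x_iy_j)=\sum_\mu s_{\mu^t}(x)s_\mu(y)$ together with complete reducibility for part~(3), and the double centralizer theorem for part~(2).

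One small point worth tightening: in deducing (2) from (3) you invoke the double centralizer theorem on each $\Alt{K}{}$, which requires that the $V(\mu)$ appearing (for $|\mu|=K$) are pairwise non-isomorphic as $U(\gl_m)$-modules and likewise the $V(\mu^t)$ as $U(\sl_n)$-modules. The first is clear; for the second you correctly note that $\mu^t$ is determined by its $\sl_n$-highest weight together with $|\mu^t|=K$, so no collapse occurs upon restriction from $\gl_n$ to $\sl_n$. You might also make explicit that $\Alt{\bullet}{}(\bC^n\otimes\bC^m)$ is a polynomial $\GL_n\times\GL_m$-module simply because it is a subquotient of the tensor algebra on the standard representation, which you flag but do not spell out.
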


We will need to generalize this result to the quantum setting.  Unfortunately, there is not much literature concerning quantum skew Howe duality, so we will develop the theory here, following the ideas of Berenstein-Zwicknagl \cite{BZ}.

We consider $ \bC^n_q \otimes \bC^m_q $ as a representation of $U_q(\sl_n) \otimes U_q(\gl_m) = U_q(\sl_n \oplus \gl_m) $. Let us write $ x_1, \dots, x_n $ for the standard basis of $ \bC_q^n $ and $ y_1, \dots, y_m $ for the standard basis of $ \bC_q^m $.  Then $ \bC_q^n \otimes \bC_q^m $ has a basis given by $ z_{ij} := x_i \otimes y_j $.

We define the quantum exterior algebra of this representation to be the quotient of its tensor algebra by the ideal generated by its quantum symmetric square,
$$\Alt{\bullet}{q}(\bC_q^n \otimes \bC_q^m) := T (\bC_q^n \otimes \bC_q^m) / \langle S^2_q (\bC_q^n \otimes \bC_q^m) \rangle$$
Following the proof of \cite[Prop. 2.33]{BZ}, we have that
$$ S^2_q (\bC_q^n \otimes \bC_q^m) = (S^2_q \bC_q^n \otimes S^2_q \bC_q^m) \oplus (\Alt{2}{q} \bC_q^n \otimes \Alt{2}{q} \bC_q^m). $$
Continuing to follow the proof of \cite[Prop. 2.33]{BZ}, we see that $ S^2_q (\bC_q^n \otimes \bC_q^m) $ is spanned by
\begin{gather*}
(x_i \otimes x_i) \otimes (y_l \otimes y_l) \\
(x_i \otimes x_i) \otimes (y_l \otimes y_p + q y_p \otimes y_l), \text{ for } l < p \displaybreak[1]\\
(x_i \otimes x_j + q x_j \otimes x_i) \otimes (y_l \otimes y_l), \text{ for } i < j \displaybreak[1]\\
(x_i \otimes x_j + q x_j \otimes x_i) \otimes (y_l \otimes y_p + q y_p \otimes y_l), \text{ for } i < j, l < p \\
(q x_i \otimes x_j -  x_j \otimes x_i) \otimes (q y_l \otimes y_p -  y_p \otimes y_l), \text{ for } i < j, l < p.
\end{gather*}
A little manipulation proves that $ \Alt{\bullet}{q}(\bC_q^n \otimes \bC_q^m) $ is the quotient of the free algebra on the set $ \{ z_{ij} \} $ modulo the relations
\begin{align*}
z_{ij} \wedge_q z_{ij} &= 0 \\
z_{ij} \wedge_q z_{lj} &= - q z_{lj} \wedge_q z_{ij}  \text{ if } i < l \displaybreak[1]\\
z_{ij} \wedge_q z_{ip} &= - q z_{ip} \wedge_q z_{ij} \text{ if } j < p \displaybreak[1]\\
z_{ij} \wedge_q z_{lp} &= - z_{lp} \wedge_q z_{ij} \text{ if  } i < l, j < p \\
z_{ij} \wedge_q z_{lp} &= - z_{lp} \wedge_q z_{ij} + (q - q^{-1}) z_{ip} \wedge_q z_{lj} \text{ if } i < l, j > p
\end{align*}

From the general theory from \cite{BZ}, we see that the algebra $ \Alt{\bullet}{q}(\bC_q^n \otimes \bC_q^m) $ carries commuting actions of $ U_q(\sl_n) $ and $ U_q(\gl_m) $ (equivalently it carries an action of the quantum group $ U_q(\sl_n \oplus \gl_m) $).  The generators of $E_p, F_p, L_p \in U_q(\gl_m) $ act on the generators $ z_{ij} $ of $\Alt{\bullet}{q}(\bC_q^n \otimes \bC_q^m) $ in the obvious fashion
$$ E_p z_{ij} = \begin{cases} z_{i,j-1} & \text{ if } p = j-1  \\
 0 & \text{  otherwise }
 \end{cases}, \ \
F_p z_{ij} = \begin{cases} z_{i,j+1} & \text{ if } p = j \\
 0 & \text{ otherwise }
 \end{cases}, \ \
L_p z_{ij} = \begin{cases} q z_{ij} & \text{ if } p = j \\
z_{ij} & \text{ otherwise }
\end{cases}$$
and similarly for the generators of $ U_q(\sl_n) $.

Recall from \cite{BZ} that if $ V $ is a representation of a quantum group $ U_q(\mathfrak{g}) $, then $ \Alt{\bullet}{q}(V) $ always admits a $ q=1 $ specialization, denoted $ \overline{\Alt{\bullet}{q}(V)}$, which will be a quotient of $ \Alt{\bullet}{} \overline{V} $ (as a $U(\mathfrak{g})$-module). For certain special $ V $, we actually specialize to the entire exterior algebra --- this is true in our case.

\begin{thm} \label{th:qSkewHowe}\mbox{}
\begin{enumerate}
\item The specialization $\overline{\Alt{\bullet}{q}(\bC_q^n \otimes \bC_q^m)} $ is isomorphic, as a $U(\gl_m) \otimes U(\sl_n)$-module, to $ \Alt{\bullet}{}(\bC^n \otimes \bC^m) $.
\item For each $ K $, the actions of $ U_q(\gl_m) $ and $ U_q(\sl_n) $ on $ \Alt{K}{q}(\bC_q^n \otimes \bC_q^m) $ generate each other's commutant.
\item As a representation of $ U_q(\gl_m) \otimes U_q(\sl_n)  $, we have a decomposition
$$ \Alt{\bullet}{q} (\mathbb C^n_q \otimes \mathbb C^m_q) = \bigoplus_{\mu} V(\mu^t) \otimes V(\mu) $$
 where $\mu$ varies over all $n$-bounded weights of $ \dU(\gl_m)$.
\item We have an isomorphism as $ U_q(\sl_n) $ representations
$ \Alt{\bullet}{q}(\mathbb C^n_q \otimes \mathbb C^m_q) \cong \Alt{\bullet}{q}(\bC_q^n)^{\otimes m} $.  Moreover under this isomorphism, the $ \ul{k} $ weight space for the action of $ U(\gl_m) $ on the left hand side is identified with $\Alt{k_1}{q} \mathbb C_q^n \otimes \cdots \otimes \Alt{k_m}{q} \mathbb C_q^n$.
\end{enumerate}
\end{thm}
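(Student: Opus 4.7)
The four statements are tightly coupled; my plan is to establish (1) first, since it provides the flatness of the quantum deformation, then deduce (4) from the basis constructed along the way, and finally obtain (3) and (2) by combining classical skew Howe duality with the flatness. The logical skeleton is: construct an ordered monomial basis of $\Alt{\bullet}{q}(\bC_q^n \otimes \bC_q^m)$, match it with the classical basis of $\Alt{\bullet}(\bC^n \otimes \bC^m)$, transport Howe duality from $q=1$ to generic $q$.

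\textbf{Establishing (1).} Using the five relation families for the $z_{ij}$ displayed in the excerpt, I would run a Bergman-style rewriting argument. Totally order the generators lexicographically by $(j,i)$ and call a monomial $z_{i_1 j_1} \wedge_q \cdots \wedge_q z_{i_K j_K}$ \emph{ordered} if $(j_1,i_1) < (j_2,i_2) < \cdots$ in this order. The first four relations let us rewrite adjacent out-of-order pairs up to a scalar; the last relation, $z_{ij} \wedge_q z_{lp} = -z_{lp} \wedge_q z_{ij} + (q-q^{-1}) z_{ip} \wedge_q z_{lj}$ for $i < l, j > p$, rewrites a pair at the cost of producing a new term, but that term has strictly smaller ``inversion count'' in a suitable lexicographic measure. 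This guarantees termination. Checking confluence on the finitely many overlapping triples $z_{ij} \wedge_q z_{lp} \wedge_q z_{rs}$ then yields the diamond lemma, proving that ordered monomials span and that there are at most $\binom{nm}{K}$ of them in degree $K$. Hence $\dim \Alt{K}{q}(\bC_q^n \otimes \bC_q^m) \le \binom{nm}{K}$. Since the specialization map $\overline{\Alt{\bullet}{q}(\bC_q^n \otimes \bC_q^m)} \twoheadrightarrow \Alt{\bullet}(\bC^n \otimes \bC^m)$ (which exists because all five relations have the classical exterior algebra relation as their $q=1$ limit, the last one using that $(q-q^{-1})\big|_{q=1}=0$) is surjective onto a space of dimension $\binom{nm}{K}$, both inequalities are equalities and the map is an isomorphism. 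This proves (1), and as a byproduct yields a monomial $\bC(q)$-basis of $\Alt{\bullet}{q}(\bC_q^n \otimes \bC_q^m)$ indexed by subsets of $\{1,\ldots,n\} \times \{1,\ldots,m\}$.

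\textbf{Deducing (4).} Define $\phi : \Alt{\bullet}{q}(\bC_q^n)^{\otimes m} \to \Alt{\bullet}{q}(\bC_q^n \otimes \bC_q^m)$ on the pure tensors $x_{S_1} \otimes \cdots \otimes x_{S_m}$ by sending them to the ordered product $z_{S_1,1} \wedge_q \cdots \wedge_q z_{S_m,m}$, where $z_{S,j} := \bigwedge_q^{\downarrow} z_{ij}$ for $i \in S$ in decreasing order. The in-slot relations show $\phi$ is well-defined, and $U_q(\sl_n)$-equivariance is immediate from how $E_i, F_i, K_i$ act on the $z_{ij}$ (they act only in the $x$-index). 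Since the image of $\phi$ on the standard basis of the domain is exactly the ordered monomial basis produced in (1), $\phi$ is a bijection. The $U_q(\gl_m)$-weight $\ul{k}$ corresponds to $|S_j| = k_j$, so the $\ul{k}$-weight space is identified with $\Alt{k_1}{q}\bC_q^n \otimes \cdots \otimes \Alt{k_m}{q}\bC_q^n$.

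\textbf{Deducing (3) and (2).} Classical skew Howe duality gives $\Alt{\bullet}(\bC^n \otimes \bC^m) = \bigoplus_\mu V(\mu^t) \otimes V(\mu)$ summed over $n$-bounded weights $\mu$. The classical-$q$ specialization commutes with the bimodule decomposition, and by (1) the generic-$q$ object specializes to the classical one. Since irreducible integrable representations of both $U_q(\gl_m)$ and $U_q(\sl_n)$ are rigid (deformations of classical irreducibles and tracked by characters), the decomposition (3) holds at generic $q$ with the same multiplicities. Finally, (2) is a formal consequence of (3): a multiplicity-free bimodule of the form $\bigoplus V(\mu^t) \otimes V(\mu)$ forces each of the two algebra images inside $\End(\Alt{K}{q}(\bC_q^n \otimes \bC_q^m))$ to equal the commutant of the other, by the standard double commutant argument.

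\textbf{Main obstacle.} The technical heart is the confluence/termination verification underlying (1). The exchange relation with an extra $(q-q^{-1})$ term is the quantum matrix / Manin quadratic algebra relation, and in principle confluence on triples could fail. Verifying it amounts to a finite but somewhat tedious computation with overlapping triples $z_{ab} \wedge_q z_{cd} \wedge_q z_{ef}$ where different choices of rewriting order must give the same normal form; the BZ framework and the standard Manin-algebra literature strongly suggest this works, but it is the one step that is more than a formal consequence of classical results. Everything else is a combination of explicit bookkeeping with the basis and transport of structure from $q=1$ by flatness.
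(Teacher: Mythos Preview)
Your overall logical structure---prove flatness (1), then (4) via an explicit basis, then (3) and (2) by transport from $q=1$---matches the paper's exactly, and your arguments for (2), (3), and (4) are essentially the paper's (your map $\phi$ is the paper's map $T$, assembled from the slot maps $T_j : x_i \mapsto z_{ij}$ and the module-algebra multiplication). The genuine difference lies in how you establish (1).

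The paper sidesteps the diamond lemma entirely. It observes that $\Alt{\bullet}{q}(\bC_q^n \otimes \bC_q^m)$ is the quadratic dual of the quantum matrix algebra $S^\bullet_q(\bC_q^n \otimes \bC_q^m)$, which is already known to be flat (\cite[Prop.~2.33]{BZ}). Flatness of a quadratic algebra implies Koszulity (\cite[Prop.~2.28]{BZ}), and numerical Koszul duality then forces the Hilbert series of $\Alt{\bullet}{q}$ to match that of $\Alt{\bullet}$, giving the dimensions without any rewriting computation. This outsources the hard combinatorics to a known result on the symmetric side. Your diamond-lemma route is more self-contained and perfectly legitimate, but as you correctly flag, the confluence check on overlapping triples is where all the work hides.

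Two corrections to your argument for (1). First, the specialization map goes the other way: specialization of an ideal can only grow, so one has a surjection $\Alt{\bullet}(\bC^n \otimes \bC^m) \twoheadrightarrow \overline{\Alt{\bullet}{q}(\bC_q^n \otimes \bC_q^m)}$, not the reverse (this is exactly how the paper and \cite{BZ} state it). Consequently specialization provides an \emph{upper} bound on $\dim \overline{\Alt{K}{q}}$, not the lower bound your squeeze argument needs. Second, you undersell the diamond lemma: termination alone gives spanning, but termination \emph{plus} confluence gives linear independence too, hence an honest basis and $\dim_{\bC(q)} \Alt{K}{q} = \binom{nm}{K}$ outright. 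So once you actually verify confluence, you already have the dimension equality and do not need specialization to squeeze bounds; you only need it---in the correct direction, between spaces now known to have equal dimension---to conclude isomorphism. Running the diamond lemma over $\bC[q,q^{-1}]$ to obtain a free integral basis makes both (1) and the input to (4) drop out cleanly.
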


In fact the last part of this theorem can be strengthened to an algebra isomorphism, but we will not need this here.

\begin{proof}
We begin with statement (1). It suffices to show that $ \Alt{\bullet}{q}(\bC_q^n \otimes \bC_q^m) $   has the correct graded dimension.  To prove this, note that $\Alt{\bullet}{q}(\bC_q^n \otimes \bC_q^m) $ is the quadratic dual of the more familiar quantum matrix algebra $ S_q^\bullet (\bC_q^n \otimes \bC_q^m) $.  By \cite[Prop. 2.33]{BZ}, this algebra is flat and thus Koszul by \cite[Prop. 2.28]{BZ}. By numerical Koszul duality,
$$ h\left(\Alt{\bullet}{q}( \bC_q^n \otimes \bC_q^m), t^{-1}\right) h(S^\bullet_q (\bC_q^n \otimes \bC_q^m), t) = 1 $$
where $ h(V^\bullet, t) = \sum_k \dim V^k t^k $ denotes graded dimension.

Since $ S^\bullet_q (\bC_q^n \otimes \bC_q^m) $ is flat, $ h(S^\bullet_q (\bC_q^n \otimes \bC_q^m), t) =  h(S^\bullet (\bC^n \otimes \bC^m), t) $ and thus  $ h(\Alt{\bullet}{q} (\bC_q^n \otimes \bC_q^m), t) =  h(\Alt{\bullet}{} (\bC^n \otimes \bC^m), t)$ as desired.

By statement (1), we know that $ \Alt{\bullet}{q}(\bC_q^n \otimes \bC_q^m) $ decomposes into irreducible $ U_q(\gl_m \oplus \sl_n) $ representations in the same manner as $ \Alt{\bullet}{}(\bC^n \otimes \bC^m) $ decomposes into $ U(\gl_m \oplus \sl_n)$-modules. This immediately implies statement (3) which in turn implies (2).

Now we consider statement (4). For each $ 1 \le j \le m$, we define an algebra map $T_j : \Alt{\bullet}{q}(\bC_q^n) \rightarrow \Alt{\bullet}{q} (\bC_q^n \otimes \bC_q^m) $ by taking generators $ x_i $ to $z_{ij}$.  This is well-defined as an algebra map because the relations in $ \Alt{\bullet}{q}(\bC_q^n) $ are taken to relations in $\Alt{\bullet}{q} (\bC_q^n \otimes \bC_q^m)$.  Moreover $T_j $ is a map of $ U_q(\sl_n) $ representations.  Let us write
$$z_{S,j} = T_j(x_S) = z_{k_1,j} \wedge_q z_{k_2,j} \wedge_q \dots \wedge_q z_{k_a,j}$$
where $ S = \{k_1 > \dots > k_a \} \subset \{ 1, \dots, n \} $. By multiplying together the $ T_j $, we define
\begin{align*}
T : \Alt{\bullet}{q}(\bC_q^n) \otimes \cdots \otimes \Alt{\bullet}{q}(\bC_q^n) &\rightarrow \Alt{\bullet}{q} (\bC_q^n \otimes \bC_q^m) \\
v_1 \otimes \cdots \otimes v_m &\mapsto T_1(v_1) \wedge_q \cdots \wedge_q T_m(v_m)
\end{align*}
Since the multiplication map on $\Alt{\bullet}{q} (\bC_q^n \otimes \bC_q^m)  $ is $U_q(\sl_n)$-equivariant, $ T $ is a map of $ U_q(\sl_n) $ representations. For $ S_1, \dots, S_m \subset \{1, \dots, n \} $, we consider $ T(x_{S_1} \otimes \cdots \otimes x_{S_m}) = z_{S_1,1} \wedge_q \cdots \wedge_q z_{S_m,m}$. If we let $ S_j $ range over all subsets, these clearly span $ \Alt{\bullet}{q} (\bC_q^n \otimes \bC_q^m) $.  This means that they form a basis since the number of such elements equals the dimension of $ \Alt{\bullet}{q} (\bC_q^n \otimes \bC_q^m) $.  Thus we see that the map $ T $ is an isomorphism since it takes a basis to a basis.
\end{proof}

\begin{lem} \label{lem:Eaction}
The action of $E_p,F_p \in U_q(\gl_m)$ on $\Alt{\bullet}{q} (\bC_q^n \otimes \bC_q^m)$ is given by
\begin{align*}
& E_p (z_{S_1,1} \wedge_q \cdots \wedge_q z_{S_m,m})  \\
&\quad= (-1)^{|S_{p+1}|+1} \sum_{r \in S_{p+1}} (-q)^{- \# \{s \in S_{p+1}: s < r \}} z_{S_1,1} \wedge_q \cdots \wedge_q z_{S_p,p} \wedge_q z_{r,p} \wedge_q z_{S_{p+1} \smallsetminus r,p+1} \wedge_q \cdots \wedge_q z_{S_m,m} \notag \\
& F_p (z_{S_1,1} \wedge_q \cdots \wedge_q z_{S_m,m}) \\
&\quad=(-1)^{|S_p|+1} \sum_{r \in S_p} (-q)^{- \#\{s \in S_p: s > r \}} z_{S_1,1} \wedge_q \cdots \wedge_q z_{S_p \smallsetminus r,p} \wedge_q z_{r,p+1} \wedge_q z_{S_{p+1},p+1} \wedge_q \cdots \wedge_q z_{S_m,m}. \notag
\end{align*}
\end{lem}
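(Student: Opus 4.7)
The plan is to use the $U_q(\gl_m)$-module algebra structure on $\Alt{\bullet}{q}(\bC_q^n \otimes \bC_q^m)$, which converts the question into the iterated Leibniz-type formula coming from the coproduct $\Delta(E_p) = E_p \otimes K_p + 1 \otimes E_p$, and similarly $\Delta(F_p) = F_p \otimes 1 + K_p^{-1} \otimes F_p$. Writing $a_j := z_{S_j,j}$, one obtains
\begin{align*}
E_p(a_1 \wedge_q \cdots \wedge_q a_m) &= \sum_{j=1}^m a_1 \wedge_q \cdots \wedge_q a_{j-1} \wedge_q E_p(a_j) \wedge_q K_p(a_{j+1}) \wedge_q \cdots \wedge_q K_p(a_m), \\
F_p(a_1 \wedge_q \cdots \wedge_q a_m) &= \sum_{j=1}^m K_p^{-1}(a_1) \wedge_q \cdots \wedge_q K_p^{-1}(a_{j-1}) \wedge_q F_p(a_j) \wedge_q a_{j+1} \wedge_q \cdots \wedge_q a_m.
\end{align*}

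The point is then that $E_p$ annihilates every $z_{k,j}$ with $j \ne p{+}1$, and $K_p$ acts as the identity on any $z_{k,j}$ with $j \ne p, p{+}1$. Hence only the $j=p{+}1$ summand survives for $E_p$, and the $K_p$'s to the right of the $(p{+}1)$-th slot are all trivial. Thus everything reduces to computing $E_p(z_{S_{p+1},p+1})$ inside the single column $p{+}1$. Writing $S_{p+1}=\{k_1>\cdots>k_a\}$ and iterating the Leibniz rule again (now using $K_p(z_{k,p+1})=q^{-1}z_{k,p+1}$), one gets
\[ E_p(z_{S_{p+1},p+1}) = \sum_{i=1}^{a} q^{-(a-i)}\, z_{k_1,p+1}\wedge_q\cdots\wedge_q z_{k_{i-1},p+1}\wedge_q z_{k_i,p}\wedge_q z_{k_{i+1},p+1}\wedge_q\cdots\wedge_q z_{k_a,p+1}. \]
The analogous computation for $F_p$ uses $K_p^{-1}(z_{k,p})=q^{-1}z_{k,p}$ and only the $j=p$ summand survives.

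The final step is to move the single ``mixed'' factor $z_{k_i,p}$ to the left (resp.\ $z_{k_i,p+1}$ to the right), using the defining relation $z_{ij}\wedge_q z_{lp}=-z_{lp}\wedge_q z_{ij}$ for $i<l$ and $j<p$: since all the factors to be passed involve indices $(k_j,p+1)$ with $k_j\ne k_i$ and the column index $p$ is strictly less than $p+1$, we fall exactly into that case, so each swap contributes a factor of $-1$. Passing $z_{k_i,p}$ past the $i-1$ entries to its left (resp.\ $z_{k_i,p+1}$ past the $a-i$ entries to its right) yields a sign $(-1)^{i-1}$ (resp.\ $(-1)^{a-i}$), after which the remaining column $p{+}1$ (resp.\ $p$) factor is exactly $z_{S_{p+1}\smallsetminus k_i,p+1}$ (resp.\ $z_{S_p\smallsetminus k_i,p}$).

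What remains is purely bookkeeping, and this is the only place where care is needed: setting $r=k_i$, we rewrite $i-1 = \#\{s\in S_{p+1}: s>r\}$ and $a-i=\#\{s\in S_{p+1}: s<r\}$ (with the obvious analogue for $F_p$), and use the identity $(-1)^{\#\{s>r\}} = (-1)^{a+1}(-1)^{\#\{s<r\}}$ (since $\#\{s<r\}+\#\{s>r\}=a-1$) to rewrite the combined sign and $q$-power as $(-1)^{|S_{p+1}|+1}(-q)^{-\#\{s<r\}}$. This matches the stated formula exactly, and the computation for $F_p$ is symmetric. There is no real obstacle here beyond keeping the signs and $q$-powers straight, since the algebraic content is just the coproduct plus relation (d) of the quantum exterior algebra.
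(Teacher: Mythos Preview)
Your proof is correct and follows essentially the same approach as the paper's: both use the module-algebra Leibniz rule coming from the coproduct $\Delta(E_p)=E_p\otimes K_p+1\otimes E_p$, reduce to computing $E_p(z_{S_{p+1},p+1})$ on a single column, and then reorder using the relation $z_{ij}\wedge_q z_{lp}=-z_{lp}\wedge_q z_{ij}$ for $i<l$, $j<p$. The paper carries this out only for $m=2$, $p=1$ and leaves the rest to the reader, whereas you do the general case directly, but the argument is the same.
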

\begin{proof}
We prove the first assertion in the case $m=2$ and $p=1$ since it simplifies notation and the general case is the same (the second assertion follows similarly). Recall that $z_{S,i} = z_{k_1,i} \wedge_q \dots \wedge_q z_{k_a,i}$ where $S = \{k_1 > \dots > k_a\}$. Since we are taking products we need to use the Hopf algebra structure of $U_q(\gl_m)$ which is given in (\ref{eq:Hopf}). In particular, $\Delta(E_p) = E_p \otimes K_p + 1 \otimes E_p$. We also note that
$$K_p(z_{ij}) =
\begin{cases}
q z_{ij} & \text{ if } j=p \\
q^{-1} z_{ij} & \text{ if } j=p+1 \\
z_{ij} & \text{ otherwise.}
\end{cases}$$
Suppose $S_1 = \{k_1 > \dots > k_a\}$ and $S_2 = \{l_1 > \dots > l_b\}$. Then we have
\begin{align*}
& E_1(z_{S_1,1} \wedge_q z_{S_2,2}) \\
&\quad= z_{S_1,1} \wedge_q ( z_{l_1,1} \wedge_q K_1(z_{l_2,2}) \wedge_q \dots \wedge_q K_1(z_{l_b,2}) + z_{l_1,2} \wedge_q z_{l_2,1} \wedge_q K_1(z_{l_3,2}) \wedge_q \dots \wedge_q K_1(z_{l_b,2}) + \dots ) \\
&\quad= q^{-b} z_{S_1,1} \wedge_q ( q z_{l_1,1} \wedge_q z_{l_2,2} \wedge_q \dots \wedge_q z_{l_b,2} + q^2 z_{l_1,2} \wedge_q z_{l_2,1} \wedge_q z_{l_3,2} \wedge_q \dots \wedge_q z_{l_b,2} + \dots) \\
&\quad= q^{-b} z_{S_1,1} \wedge_q (q z_{l_1,1} \wedge_q z_{S_2 \smallsetminus l_1,2} - q^2 z_{l_2,1} \wedge z_{S_2 \smallsetminus l_2,2} + q^3 z_{l_3,1} \wedge_q z_{S_2 \smallsetminus l_3,2} - \dots ).
\end{align*}
The result follows after some simplification.
\end{proof}

\subsection{Definition of the functor \texorpdfstring{$\Phi_m$}{Phi}}

We will now use  quantum skew Howe duality to define the functor $\Phi_m$. By Theorem \ref{th:qSkewHowe}(4), we have a $U_q(\gl_m)$ action with the weight spaces $\Alt{k_1}{q} \mathbb C_q^n \otimes \cdots \otimes \Alt{k_m}{q} \mathbb C_q^n$ and commuting with the $U_q(\sl_n)$ action. Thus we get a map
\begin{equation}\label{eq:phimap}
\one_\ul{l} \dU(\gl_m) \one_\ul{k} \rightarrow \Hom_{U_q(\sl_n)} \left(\Alt{k_1}{q} \mathbb C^n \otimes \cdots \otimes \Alt{k_m}{q} \mathbb C^n, \Alt{l_1}{q} \mathbb C^n \otimes \cdots \otimes \Alt{l_m}{q} \mathbb C^n\right)
\end{equation}
for any two $n$-bounded weights $\ul{k}, \ul{l}$ with $\sum_i k_i = \sum_i l_i $.

Since the action of $U_q(\gl_m)$  on $\Alt{K}{q}(\mathbb C^n \otimes \mathbb C^m) $ generates the commutant of the $U_q(\sl_n)$ action, the map (\ref{eq:phimap}) is surjective.

Thus we may define a functor $\Phi_m: \dU(\gl_m) \rightarrow \Rep(\SL_n)$ as follows:
\begin{itemize}
\item On objects
$\ul{k} \mapsto
\begin{cases}
\Alt{k_1}{q} \mathbb C_q^n \otimes \cdots \otimes \Alt{k_m}{q} \mathbb C_q^n & \text{ if } \ul{k} \text{ is } n\text{-bounded } \\
0 & \text{ otherwise.}
\end{cases}$
\item On morphisms $ \Phi_m $ is given by (\ref{eq:phimap}).
\end{itemize}
Since (\ref{eq:phimap}) was surjective the functor $\Phi_m$ is full.

\subsection{Fully-faithfulness of \texorpdfstring{$ \Phi_m^n$}{Phi}}
\label{sec:fully-faithful}

Since all weights of $\Alt{K}{}(\mathbb{C}^n \otimes \mathbb{C}^m)$ are $ n$-bounded, the functor $\Phi_m: \dU(\gl_m)\rightarrow \Rep(\SL_n)$ factors through $\dU^n(\gl_m)$. We denote this induced functor $\Phi_m^n: \dU^n(\gl_m) \rightarrow \Rep(\SL_n)$.

\begin{thm}\label{th:functorfullyfaithful}
The functor $\Phi_m^n: \dU^n(\gl_m) \rightarrow \Rep(\SL_n)$ is fully faithful (meaning that it induces an isomorphisms between $\Hom$-spaces).
\end{thm}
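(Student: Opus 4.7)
The plan is to establish fullness and faithfulness separately, reducing faithfulness to a Hom-space dimension bound. Set $V_\ul{k} := \Alt{k_1}{q}\bC_q^n \otimes \cdots \otimes \Alt{k_m}{q}\bC_q^n$. For fullness: by Theorem \ref{th:qSkewHowe}(2), the action of $U_q(\gl_m)$ on each $\Alt{K}{q}(\bC_q^n \otimes \bC_q^m)$ generates the full commutant of $U_q(\sl_n)$, so the map
$$\one_\ul{l}\dU(\gl_m)\one_\ul{k} \longrightarrow \Hom_{U_q(\sl_n)}(V_\ul{k}, V_\ul{l})$$
is surjective for every pair of $n$-bounded weights with $|\ul{k}|=|\ul{l}|$. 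Since non-$n$-bounded weight spaces of $\Alt{\bullet}{q}(\bC_q^n \otimes \bC_q^m)$ vanish, the map kills any morphism factoring through a non-$n$-bounded idempotent, so it factors through $\one_\ul{l}\dU^n(\gl_m)\one_\ul{k}$, giving fullness of $\Phi_m^n$.

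For faithfulness, I would compare dimensions. Using the decomposition of Theorem \ref{th:qSkewHowe}(3)--(4) and Schur's lemma,
$$\Hom_{U_q(\sl_n)}(V_\ul{k}, V_\ul{l}) \;=\; \bigoplus_\mu \Hom_{\bC(q)}(V(\mu)_\ul{k}, V(\mu)_\ul{l}),$$
summed over $n$-bounded dominant $\gl_m$-weights $\mu$ with $|\mu| = |\ul{k}|$, so its dimension equals $\sum_\mu \dim V(\mu)_\ul{k} \cdot \dim V(\mu)_\ul{l}$. It then suffices to establish the opposite inequality $\dim \one_\ul{l}\dU^n(\gl_m)\one_\ul{k} \le \sum_\mu \dim V(\mu)_\ul{k} \cdot \dim V(\mu)_\ul{l}$, since a surjection between finite-dimensional spaces of equal dimension is an isomorphism.

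The main obstacle is precisely this upper bound. The approach I would take is to start from a PBW basis (or Lusztig's canonical basis) of $\one_\ul{l}\dU(\gl_m)\one_\ul{k}$ and identify which basis elements survive in the quotient $\dU^n(\gl_m)$. A PBW monomial $F_{i_1}^{(r_1)} \cdots F_{i_a}^{(r_a)} E_{j_1}^{(s_1)} \cdots E_{j_b}^{(s_b)} \one_\ul{k}$ whose sequence of partial weights exits the $n$-bounded region must lie in the ideal generated by non-$n$-bounded idempotents; the delicate point is that different PBW orderings give different ``intermediate weight'' conditions, so one must choose the ordering carefully, or invoke a triangularity property of the canonical basis, in order to see that the surviving basis is in bijection with a set of cardinality $\sum_\mu \dim V(\mu)_\ul{k} \cdot \dim V(\mu)_\ul{l}$, for instance with pairs of semistandard Young tableaux of shape $\mu$ and contents $\ul{k}$, $\ul{l}$. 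Alternatively one could invoke the Beilinson-Lusztig-MacPherson realization of $\dU^n(\gl_m)$ as a convolution algebra on partial flags with step sizes at most $n$, which furnishes the required basis directly and makes the dimension match transparent.
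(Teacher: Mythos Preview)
Your fullness argument is fine and matches the paper. The gap is in faithfulness: you correctly reduce to the dimension inequality
\[
\dim \one_\ul{l}\dU^n(\gl_m)\one_\ul{k} \;\le\; \sum_\mu \dim V(\mu)_\ul{k}\cdot\dim V(\mu)_\ul{l},
\]
but then do not prove it. The PBW-combinatorics route you sketch is genuinely delicate for exactly the reason you name: whether a monomial ``passes through'' a non-$n$-bounded weight depends on the ordering, and it is not at all clear that one choice of ordering yields a surviving set of the right size. Invoking Beilinson--Lusztig--MacPherson would work, but that is a substantial external input and you have not said how to extract the count from it.

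The paper sidesteps all of this combinatorics. The key observation is that the ideal killed when passing to $\dU^n(\gl_m)$ is exactly the ideal $I_{\lambda(K)}$ generated by the $\one_\nu$ with $\nu$ \emph{not} dominated by $\lambda(K)=(n,\dots,n,r,0,\dots,0)$ (one such $\lambda(K)$ for each total degree $K$). The quotient $\dalg(\gl_m)/I_{\lambda(K)}$ is finite-dimensional, and its module category is just the full subcategory of finite-dimensional $\dalg(\gl_m)$-modules on which $I_{\lambda(K)}$ acts trivially, namely the semisimple category with simple objects $V(\mu)$ for dominant $\mu\le\lambda(K)$. Artin--Wedderburn then gives
\[
\dalg(\gl_m)/I_{\lambda(K)} \;\xrightarrow{\ \sim\ }\; \bigoplus_{\mu\le\lambda(K)} \End{V(\mu)}
\]
directly, which is exactly the statement you were trying to reach by dimension count. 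No basis analysis is needed: the semisimplicity of the representation category does all the work.
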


To prove this result, we need a general fact about reductive Lie algebras (and quantum groups). This result was not previously known to us and we thank the referee for pointing out the reference \cite[Thm. 4.2]{MR1990659}. We include here a proof for completeness.

For simplicity, we state this result in the $ \gl_m $ case. We will need to consider the algebra version (instead of the category version) of Lusztig's idempotent form,
$$
\dalg(\gl_m) := \bigoplus_{\ul{k}, \ul{l}} \one_{\ul{l}} \dU(\gl_m) \one_{\ul{k}}.
$$
In a similar fashion we define $\dalg^n(\gl_m)$ as a quotient of $\dalg(\gl_m)$.

For any dominant weight $ \lambda $, let $V(\lambda)$ the corresponding highest weight representation of $ U_q(\gl_m)$.

We have the usual dominance order on dominant weights of $ \gl_m $ where $ \mu \le \lambda $ if $ \lambda - \mu $ is a sum of the simple roots $ \alpha_i $.  We extend this notion as follows.  We say that a dominant weight $ \lambda $ \textbf{dominates} a weight $ \nu $, if $ \nu $ lies in the Weyl group orbit of a dominant weight $ \mu \le \lambda $.

Let $ I_\lambda $ be the 2-sided ideal in $\dalg(\gl_m)$ generated by all $ 1_\nu $ such that $ \lambda $ does not dominate $ \nu $. If $ \mu $ is a dominant weight of $ \gl_m $ with $ \mu \le \lambda $, then for each $ \nu $ as above, $ \nu $ is not a weight of $V(\mu)$.  Thus $ I_\lambda $ acts trivially on $ V(\mu) $ and we get a representation $  \dalg(\gl_m)/I_\lambda \rightarrow \End{V(\mu)} $ .

\begin{lem}
For any dominant weight $ \lambda $, the map $ \dalg(\gl_m) / I_\lambda \rightarrow \bigoplus_{\mu \le \lambda} \End{V(\mu)}$ is an isomorphism (where the sum is over dominant $\mu$).
\end{lem}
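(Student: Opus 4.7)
The plan is to establish the isomorphism via separate surjectivity and injectivity arguments, with the latter being the crux.

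Well-definedness is immediate: every weight of $V(\mu)$ is dominated by $\mu$, so if $\mu \leq \lambda$ is dominant and $\nu$ is not dominated by $\lambda$, then $V(\mu)_\nu = 0$, and hence $\one_\nu$ acts as zero on $\bigoplus_{\mu \leq \lambda} V(\mu)$. Since $I_\lambda$ is generated as a two-sided ideal by exactly such idempotents, the entire ideal lies in the kernel. For surjectivity, I would invoke the Jacobson density (Burnside) theorem: the irreducibles $V(\mu)$ for distinct dominant $\mu \leq \lambda$ are finite-dimensional, absolutely irreducible (endomorphism ring $\bC(q)$), and pairwise non-isomorphic as $\dalg(\gl_m)$-modules, so the representation map surjects onto the full product $\bigoplus_{\mu \leq \lambda} \End{V(\mu)}$.

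The substance of the proof is injectivity, which I would establish via a dimension comparison. The target has dimension $\sum_{\nu,\nu'} \sum_{\mu \leq \lambda} \dim V(\mu)_\nu \cdot \dim V(\mu)_{\nu'}$. For an upper bound on the source, decompose $\dalg(\gl_m)/I_\lambda$ into its weight-space pieces $\one_{\nu'}(\dalg(\gl_m)/I_\lambda)\one_\nu$; these vanish unless both $\nu$ and $\nu'$ are themselves dominated by $\lambda$. Using the triangular (PBW) decomposition, each such piece is spanned by monomials $F^A E^B \one_\nu$, and such a monomial is killed modulo $I_\lambda$ unless every intermediate weight it traverses (in particular $\nu + \mathrm{wt}(B)$) is dominated by $\lambda$. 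A careful count of the surviving normal-form monomials, using the commutation and Serre-type relations in $\dalg(\gl_m)$ to eliminate dependencies, should give the matching upper bound $\sum_\mu \dim V(\mu)_\nu \cdot \dim V(\mu)_{\nu'}$ on each weight piece.

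The main obstacle is this dimension bound, since the PBW monomials are not linearly independent modulo the relations. A natural organization is induction on $\lambda$ in the dominance order: for a predecessor $\lambda' < \lambda$, the inclusion $I_\lambda \subseteq I_{\lambda'}$ gives a short exact sequence
\[
0 \to I_{\lambda'}/I_\lambda \to \dalg(\gl_m)/I_\lambda \to \dalg(\gl_m)/I_{\lambda'} \to 0,
\]
whose right-hand term is $\bigoplus_{\mu \leq \lambda'} \End{V(\mu)}$ by induction, so that the inductive step reduces to identifying the new quotient $I_{\lambda'}/I_\lambda$ with $\End{V(\lambda)}$ (assuming $\lambda$ is the unique added dominant weight). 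Alternatively, as the paper notes, one can appeal directly to \cite[Thm.~4.2]{MR1990659}, which establishes precisely this result in the broader reductive setting.
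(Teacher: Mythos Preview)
Your well-definedness and surjectivity arguments are fine, but the injectivity argument is not a proof: you sketch two possible routes (a PBW count and an induction on $\lambda$) and complete neither. The PBW count is genuinely delicate---as you yourself note, the monomials are not independent, and ``a careful count \ldots\ should give the matching upper bound'' is a hope, not an argument. The inductive approach also has a real gap: you reduce to showing $I_{\lambda'}/I_\lambda \cong \End{V(\lambda)}$, but this is essentially the same difficulty repackaged (you would need to produce enough elements of $I_{\lambda'}$ that are independent modulo $I_\lambda$, which is again a dimension count). Neither route is hopeless, but neither is carried out.

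The paper bypasses all of this with a one-line structural argument that you should absorb. Observe first that $\dalg(\gl_m)/I_\lambda$ is finite-dimensional (only finitely many weights are dominated by $\lambda$, and each weight piece is finite-dimensional). Now apply Wedderburn: to show the map is an isomorphism it suffices to show that the category of finite-dimensional $\dalg(\gl_m)/I_\lambda$-modules is semisimple with simple objects exactly the $V(\mu)$ for dominant $\mu \leq \lambda$. But a $\dalg(\gl_m)/I_\lambda$-module is nothing more than a $\dalg(\gl_m)$-module on which $I_\lambda$ acts trivially, and the category of finite-dimensional $\dalg(\gl_m)$-modules is already semisimple with simples $V(\mu)$; the ones annihilated by $I_\lambda$ are precisely those with $\mu \leq \lambda$. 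Done. The point is that semisimplicity upstairs gives semisimplicity of the quotient for free, so no dimension bookkeeping is ever needed.
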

\begin{proof}
First note that $\dalg(\gl_m)/ I_\lambda $ is finite-dimensional. By Wedderburn's theorem, it suffices to show that the category of finite-dimensional $\dalg(\gl_m)/I_\l$-modules is semisimple with simple objects the $V(\mu)$, for $\mu \le \lambda$.

Now a $\dalg(\gl_m)/I_\l$ module is the same thing as a $\dalg(\gl_m) $ module in which $I_\l$ acts trivially. Since the category of finite-dimensional $\dalg(\gl_m) $ modules is semisimple and the ones where $I_\l$ acts trivially are precisely the $ V(\mu) $ for $ \mu \le \lambda$, the result follows.
\end{proof}

\begin{example}
Suppose $m=2$ and $\l = (2,0)$. Then the weights dominated by $\l$ are $(2,0), (1,1), (0,2)$. Subsequently,  the morphisms in $\dot{U}_q(\gl_2)/I_\l$ are spanned by
$$\one_{(2,0)}, \one_{(1,1)}, \one_{(0,2)}, E \one_{(0,2)}, E \one_{(1,1)}, E^2 \one_{(0,2)}, F \one_{(2,0)}, F\one_{(1,1)}, F^2 \one_{(2,0)}, EF\one_{(1,1)} = FE\one_{(1,1)}.$$
On the other hand, the \emph{dominant} weights dominated by $(2,0)$ are $(2,0)$ and $(1,1)$. Note that $V((2,0)) = S^2 \bC^2 $, which is three-dimensional, and $V((1,1)) = \Alt{2}{} \bC^2 $, which is one-dimensional.  Thus $\bigoplus_{\mu \le \l} \End{V(\mu)} \cong \End{\bC^3} \oplus \End{\bC}$. Notice that these spaces have the same dimension (they are both 10-dimensional).
\end{example}

\begin{proof}
We now return to proving Theorem \ref{th:functorfullyfaithful}. Recall that by quantum skew Howe duality, we have a decomposition
$$ \Alt{\bullet}{q} (\bC_q^n \otimes \bC_q^m) = \bigoplus_{\mu} V(\mu^t) \otimes V(\mu) $$
as $U_q(\sl_n) \otimes U_q(\gl_m)$-representations, where $\mu$ varies over all $n$-bounded weights of $U_q(\gl_m)$. Thus for any $ 0 \le K \le mn $,
$$ \Hom_{U_q(\sl_n)} \left(\Alt{K}{q}(\bC_q^n \otimes \bC_q^m), \Alt{K}{q}(\bC_q^n \otimes \bC_q^m)\right) = \bigoplus_{\mu} \End{V(\mu)} $$
where $ \mu $ ranges over $ n$-bounded weights with $ \sum \mu_i = K $.  Note that these $\mu $ are exactly the set of dominant weights of $ \gl_m $ which satisfy $ \mu \le \lambda(K) $, where $ \lambda(K) $ is the unique weight of the form $(n, \dots, n, r, 0, \dots, 0) $ where the terms sum to $K$.  Applying the previous lemma, we see that the map
$$ \dalg(\gl_m)/I_{\lambda(K)} \rightarrow \Hom_{U_q(\sl_n)} \left(\Alt{K}{q}(\bC_q^n \otimes \bC_q^m), \Alt{K}{q}(\bC_q^n \otimes \bC_q^m)\right) $$
is an isomorphism. Since a weight $ \mu $ is $n$-bounded if and only if $ \mu $ is dominated by $ \lambda(K) $ where $ K = \sum \mu_i $ we get
$$ \dalg^n(\gl_m) = \bigoplus_{K=0}^{nm} \dalg(\gl_m)/I_{\lambda(K)} $$
and the result follows.
\end{proof}

\begin{rem}
When $ K = n $, the algebra $ \dalg(\gl_m)/I_{\lambda(K)} $ appearing above is known as the $q$-Schur algebra.  The algebras $ \dalg(\gl_m)/I_{\lambda(K)} $ for general $ K $ are called generalized $q$-Schur algebras by Doty \cite{MR1990659}.
\end{rem}

\section{Ladders}
\label{sec:ladders}

\subsection{Ladders and \texorpdfstring{$\dU^n(\gl_m)$}{U\_q gl\_m} }
We will now introduce a diagrammatic notation for morphisms in $ \dU^n(\gl_m)$. We begin by formalizing the notion of a ladder.

\begin{defn}
An {\bf $n$-ladder} with $m$ uprights is a diagram drawn in a rectangle, with
\begin{itemize}
\item $m$ parallel vertical lines running from the bottom edge to the top edge of the rectangle, oriented upwards,
\item some number of oriented horizontal {\bf rungs} connecting adjacent uprights,
\item a labelling of each interval (rungs or segments of uprights) by an integer between $0$ and $n$ inclusive,
\end{itemize}
such that the sum of labels (taken with signs according to the orientations of the intervals) at each trivalent vertex is zero.
\end{defn}

Now we introduce the category $\Lad_m^n $ of ladders. The objects are sequences of length $m$ of integers between $0$ and $n$ inclusive (that is, $n$-bounded weights of $\dU(\gl_m)$). The morphisms are linear combinations of ladders. The source of a ladder is the sequence of labels appearing on the lowest segments of the uprights, and the target is the sequence of labels appearing on the highest segments. Composition of morphisms is given by vertical concatenation of ladders.

Notice that, as $m$ varies, the categories $\Lad_m^n$ fit together as a tensor category $\Lad^n$, with tensor product given by horizontal juxtaposition.  In this tensor category the morphisms are generated by the single rung ladders.

Next, we define a functor from $\Lad_m^n \rightarrow \dU^n(\gl_m)$ which on objects is just the identity.  On morphisms we send rungs between the $i$-th and $(i+1)$-th uprights to divided powers as follows:
\begin{align*}
\begin{tikzpicture}[baseline=20]
\laddercoordinates{3}{1}
\node[below] at (l00) {$k_{i{-}1}$};
\node[below] at (l10) {$k_i$};
\node[below] at (l20) {$k_{i{+}1}$};
\node[below] at (l30) {$k_{i{+}2}$};
\node[above] at (l11) {$k_i{+}r$};
\node[above] at (l21) {$k_{i{+}1}{-}r$};
\node at ($(l00)+(-1,1)$) {$\cdots$};
\ladderI{0}{0};
\ladderFn{1}{0}{}{}{$r$}
\ladderI{3}{0};
\node at ($(l30)+(1,1)$) {$\cdots$};
\end{tikzpicture} & \mapsto E_i^{(r)} \one_{k_1\cdots k_m} \\
\intertext{and}
\begin{tikzpicture}[baseline=20]
\laddercoordinates{3}{1}
\node[below] at (l00) {$k_{i{-}1}$};
\node[below] at (l10) {$k_i$};
\node[below] at (l20) {$k_{i{+}1}$};
\node[below] at (l30) {$k_{i{+}2}$};
\node[above] at (l11) {$k_i{-}r$};
\node[above] at (l21) {$k_{i{+}1}{+}r$};
\node at ($(l00)+(-1,1)$) {$\cdots$};
\ladderI{0}{0};
\ladderEn{1}{0}{}{}{$r$}
\ladderI{3}{0};
\node at ($(l30)+(1,1)$) {$\cdots$};
\end{tikzpicture} & \mapsto F_i^{(r)} \one_{k_1\cdots k_m}
\end{align*}



\begin{prop} \label{prop:LaddertoU}
Under the functor above, $\dU^n(\gl_m)$ is the quotient of $\Lad_m^n$ by the following relations:
\begin{align}
\begin{tikzpicture}[baseline=40]
\laddercoordinates{2}{2}
\node[below] at (l00) {$k_1$};
\node[below] at (l10) {$k_2$};
\node[below] at (l20) {$k_3$};
\ladderEn{0}{0}{$k_1{-}r$}{$k_2{+}r$}{$r$}
\ladderFn{1}{1}{$k_2{+}r{+}s$}{$k_3{-}s$}{$s$}
\ladderIn{0}{1}{1}
\ladderIn{2}{0}{1}
\end{tikzpicture}
& =
\begin{tikzpicture}[baseline=40]
\laddercoordinates{2}{2}
\node[below] at (l00) {$k_1$};
\node[below] at (l10) {$k_2$};
\node[below] at (l20) {$k_3$};
\ladderEn{0}{1}{$k_1{-}r$}{$k_2{+}r{+}s$}{$r$}
\ladderFn{1}{0}{$k_2{+}s$}{$k_3{-}s$}{$s$}
\ladderIn{0}{0}{1}
\ladderIn{2}{1}{1}
\end{tikzpicture}
\label{eq:IHlad}
\displaybreak[1] \\
\begin{tikzpicture}[baseline=40]
\laddercoordinates{2}{2}
\node[below] at (l00) {$k_1$};
\node[below] at (l10) {$k_2$};
\node[below] at (l20) {$k_3$};
\ladderFn{0}{0}{$k_1{+}r$}{$k_2{-}r$}{$r$}
\ladderEn{1}{1}{$k_2{-}r{-}s$}{$k_3{+}s$}{$s$}
\ladderIn{0}{1}{1}
\ladderIn{2}{0}{1}
\end{tikzpicture}
& =
\begin{tikzpicture}[baseline=40]
\laddercoordinates{2}{2}
\node[below] at (l00) {$k_1$};
\node[below] at (l10) {$k_2$};
\node[below] at (l20) {$k_3$};
\ladderFn{0}{1}{$k_1{+}r$}{$k_2{-}r{-}s$}{$r$}
\ladderEn{1}{0}{$k_2{-}s$}{$k_3{+}s$}{$s$}
\ladderIn{0}{0}{1}
\ladderIn{2}{1}{1}
\end{tikzpicture}
\label{eq:IHlad2}
\displaybreak[1] \\
\tikz[baseline=40]{
\laddercoordinates{1}{2}
\ladderEn{0}{0}{$k-s$}{$l+s$}{$s$}
\ladderEn{0}{1}{$k-s-r$}{$l+s+r$}{$r$}
\node[left] at (l00) {$k$};
\node[right] at (l10) {$l$};
}
&=
\qBinomial{r+s}{r}
\tikz[baseline=20]{
\laddercoordinates{1}{1}
\ladderEn{0}{0}{$k-s-r$}{$l+s+r$}{$r+s$}
\node[left] at (l00) {$k$};
\node[right] at (l10) {$l$};
}
\label{eq:EE}
\displaybreak[1]
\\
\begin{tikzpicture}[baseline=40]
\laddercoordinates{1}{2}
\node[left] at (l00) {$k$};
\node[right] at (l10) {$l$};
\ladderEn{0}{0}{$k{-}s$}{$l{+}s$}{$s$}
\ladderFn{0}{1}{$k{-}s{+}r$}{$l{+}s{-}r$}{$r$}
\end{tikzpicture}
&= \sum_t \qBinomial{k-l+r-s}{t}
\begin{tikzpicture}[baseline=40]
\laddercoordinates{1}{2}
\node[left] at (l00) {$k$};
\node[right] at (l10) {$l$};
\ladderFn{0}{0}{$k{+}r{-}t$}{$l{-}r{+}t$}{$r{-}t$}
\ladderEn{0}{1}{$k{-}s{+}r$}{$l{+}s{-}r$}{$s{-}t$}
\end{tikzpicture}
\label{eq:EF=FE}
\end{align}
\vspace{-6mm}
\begin{equation}\label{eq:serre1}
\renewcommand{\ladderY}{1}
\begin{ladder}{2}{3}
\ladderE{0}{0}{}{}
\ladderE{0}{1}{}{}
\ladderE{1}{2}{}{}
\ladderI{0}{2}
\ladderIn{2}{0}{2}
\node[below] at (l00) {$k_1$};
\node[below] at (l10) {$k_2$};
\node[below] at (l20) {$k_3$};
\end{ladder}
- \qi{2}
\begin{ladder}{2}{3}
\ladderE{0}{0}{}{}
\ladderE{1}{1}{}{}
\ladderE{0}{2}{}{}
\ladderI{0}{1}
\ladderI{2}{0}
\ladderI{2}{2}
\node[below] at (l00) {$k_1$};
\node[below] at (l10) {$k_2$};
\node[below] at (l20) {$k_3$};
\end{ladder}
+
\begin{ladder}{2}{3}
\ladderE{1}{0}{}{}
\ladderE{0}{1}{}{}
\ladderE{0}{2}{}{}
\ladderI{0}{0}
\ladderIn{2}{1}{2}
\node[below] at (l00) {$k_1$};
\node[below] at (l10) {$k_2$};
\node[below] at (l20) {$k_3$};
\end{ladder}
= 0
\end{equation}
\end{prop}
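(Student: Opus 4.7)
The plan is to verify that the functor $\Lad_m^n \to \dU^n(\gl_m)$ induces an equivalence after imposing the listed relations. Since the functor is the identity on objects (both sides are indexed by $n$-bounded weights), it suffices to show (a) well-definedness on the quotient, (b) fullness, and (c) faithfulness. Throughout, I will keep in mind that $\Lad_m^n$ is a category of planar diagrams modulo planar isotopy preserving the ladder structure, so rungs in non-overlapping columns can always be slid past each other for free.

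For well-definedness, I would check that each of the five listed ladder relations becomes a valid identity in $\dU^n(\gl_m)$ under the assignment. This is a direct matching: relation \eqref{eq:EE} (and its $F$-analog, and arrow reversals) becomes \eqref{rel:5}; relation \eqref{eq:EF=FE} is exactly \eqref{rel:1}; relations \eqref{eq:IHlad} and \eqref{eq:IHlad2}, in which an $E$-rung between uprights $i,i{+}1$ is commuted past an $F$-rung (or $E$-rung) between uprights $i{+}1, i{+}2$, become the mixed commutation \eqref{rel:2} for $j=i+1$ (these are $E_i F_{i+1} = F_{i+1} E_i$ and $F_i E_{i+1} = E_{i+1} F_i$, which are not part of the same $\sl_2$-triple); and \eqref{eq:serre1} is the Serre relation \eqref{rel:3}. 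Each identity can be verified by reading the rung labels and divided powers.

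For fullness, note that $\dU^n(\gl_m)$ is generated as a category by the divided-power morphisms $E_i^{(r)} \one_\ul{k}$ and $F_i^{(r)} \one_\ul{k}$ (together with identity morphisms), all of which by construction are images of single-rung ladders. Arbitrary words in the generators lift to vertical concatenations of such ladders, so the functor is surjective on morphism spaces.

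The main obstacle is faithfulness. Here the task is to verify that every defining relation among generators of $\dU^n(\gl_m)$ can be derived from the five listed ladder relations together with planar isotopy. Relations \eqref{rel:1}, \eqref{rel:3}, \eqref{rel:5} are precisely \eqref{eq:EF=FE}, \eqref{eq:serre1}, \eqref{eq:EE}. Relation \eqref{rel:2} splits into two cases: if $|i-j|=1$ the rungs share a middle upright, and the identity is \eqref{eq:IHlad} or \eqref{eq:IHlad2}; if $|i-j|>1$ the two rungs lie in disjoint columns and their commutation is realized by planar isotopy, with no algebraic input needed. The same reasoning handles \eqref{rel:4}: distant $E$-rungs (or $F$-rungs) occupy non-overlapping pairs of uprights and commute by isotopy. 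One must also confirm the $n$-boundedness quotient matches on both sides: any edge in a ladder is labelled by an integer in $\{0,\dots,n\}$, and for labels outside this range we declare the corresponding object to be zero, matching the ideal in $\dU^n(\gl_m)$ generated by non-$n$-bounded idempotents. Since every relation of $\dU^n(\gl_m)$ is accounted for, and no spurious identities arise in the ladder presentation beyond those already listed, the induced functor is faithful. The bookkeeping subtlety throughout is to ensure that the edge labels on the uprights correctly track the idempotents $\one_\ul{k}$ and the weights into which each generator $E_i^{(r)}, F_i^{(r)}$ acts, but this is dictated by the requirement that labels sum to zero at each trivalent vertex.
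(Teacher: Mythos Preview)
Your proposal is correct and follows essentially the same approach as the paper: verify fullness from the fact that single-rung ladders hit all generators $E_i^{(r)}, F_i^{(r)}$, and verify that the kernel is generated by the listed relations by matching each defining relation \eqref{rel:1}--\eqref{rel:5} of $\dU^n(\gl_m)$ (plus the $n$-boundedness condition) either to one of the listed ladder relations or to planar isotopy of rungs in disjoint columns. One small slip: your parenthetical ``(or $E$-rung)'' in the description of \eqref{eq:IHlad} and \eqref{eq:IHlad2} is misleading, since both of those relations commute an $E$-type rung past an $F$-type rung on adjacent pairs of uprights (they are the two instances of \eqref{rel:2} with $|i-j|=1$), not an $E$ past an $E$.
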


\noindent together with the mirror reflection of (\ref{eq:EE}) and (\ref{eq:serre1}) (note that nothing happens to the coefficients of these equations under these operations). These relations are to be understood as containing arbitrarily many vertical strands on either side. Moreover, the horizontal rungs in (\ref{eq:serre1}) are all labelled $1$.

\begin{proof}
Since all $ E_i, F_i $ are in the image of the functor, we see that $ \Lad_m^n \rightarrow \dU^n(\gl_m) $ is full (it is obviously dominant).  It remains to see that the above relations generate the kernel.  To see this we need to check equations (\ref{rel:1},\ref{rel:2},\ref{rel:3},\ref{rel:4},\ref{rel:5}), along with the relation that $ \one_{\ul{k}} = 0 $ if $ \ul{k} $ is not an $ n$-bounded weight (although this last thing is clear from the definition of $\Lad_m^n$).

Equation (\ref{rel:1}) becomes (\ref{eq:EF=FE}) in diagrammatic form.  When $ |i-j| > 1 $, equation (\ref{rel:2}) is reflected in the isotopy invariance of ladders, while when $ |i-j|= 1$, (\ref{rel:2}) is (\ref{eq:IHlad}) and (\ref{eq:IHlad2}) in diagrammatic form. Relation (\ref{rel:3}) corresponds to (\ref{eq:serre1}) while (\ref{rel:4}) is again isotopy invariance. Finally, (\ref{rel:5}) corresponds to (\ref{eq:EE}).
\end{proof}

\subsection{Ladders as webs}\label{sec:psi}
There is a functor from $ \Lad_m^n \rightarrow \FSp(\SL_n)$  by forgetting the ladder structure of a ladder and thinking of it as a web. However, there is a slight discrepancy at the level of objects. More precisely, in $\Lad_m^n$ the objects $\ul{k}$ are sequences in $\{0,\ldots,n\}$, while in $\FSp(\SL_n)$ the objects are sequences in $\{1^+,\ldots,(n-1)^+\}$. The functor deletes $0$s and $n$s from the sequences, and sends $k$ to $k^+$.

\begin{prop}
\label{prop:psi}
The composition $\Lad_m^n \to \FSp(\SL_n) \to \Sp(\SL_n)$ can be factored through the functor $\Lad_m^n \to \dU^n(\gl_m)$ from the previous section, giving rise to a functor $\Psi_m^n : \dU^n(\gl_m) \to \Sp(\SL_n)$.
\end{prop}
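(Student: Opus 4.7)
The plan is to invoke Proposition~\ref{prop:LaddertoU}, which presents $\dU^n(\gl_m)$ as the quotient of $\Lad_m^n$ by the relations \eqref{eq:IHlad}, \eqref{eq:IHlad2}, \eqref{eq:EE}, \eqref{eq:EF=FE} and \eqref{eq:serre1}, together with the vanishing of $\one_{\ul{k}}$ for any $\ul{k}$ that is not $n$-bounded. Since the composition $\Lad_m^n \to \FSp(\SL_n) \to \Sp(\SL_n)$ is already at our disposal at the level of underlying webs, to produce the factored functor $\Psi_m^n$ I only need to check that the image in $\Sp(\SL_n)$ of each of these defining relations holds.

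Three of the five relations should translate directly onto relations already present in the spider presentation. The identity \eqref{eq:EE} matches the square removal relation \eqref{eq:id1b} on the nose; the identity \eqref{eq:EF=FE} matches the square switch relation \eqref{eq:commutation}; and the quantum Serre identity \eqref{eq:serre1} is exactly the relation \eqref{eq:serre} we have already established in Lemma~\ref{lem:consequences}. In each case the verification amounts to stripping away the ladder formatting and observing that the two diagrams are literally identical.

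The real content of the proof lies in the two remaining IH ladder relations \eqref{eq:IHlad} and \eqref{eq:IHlad2}. Both sides of \eqref{eq:IHlad} depict a configuration in which the middle upright carries two consecutive trivalent vertices: the incoming label $k_2$ is joined by a rung from the left carrying $r$ and a rung from the right carrying $s$, producing $k_2 + r + s$ going up; the two sides differ only in which rung is merged first. I would use pivotal isotopy to bend the two horizontal rungs downward, so that all three input strands $k_2$, $r$, $s$ approach the merge region from below in the spatial order (left-to-right) $r, k_2, s$. Under this move \eqref{eq:IHlad} becomes exactly the two associations of a triple fusion $\{k_2, r, s\} \mapsto k_2 + r + s$, which are equated by the $I{=}H$ relation \eqref{eq:IH} (with $k=r,\ l=k_2,\ m=s$). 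The arrow-reversed $I{=}H$ relation, which is part of the spider presentation, handles \eqref{eq:IHlad2} in the same way.

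Finally, the vanishing $\one_{\ul{k}} = 0$ for non-$n$-bounded $\ul{k}$ is automatic from the spider conventions, which already declare any strand with label outside $\{0, \ldots, n\}$ to be zero; and since the two sides of each ladder relation share their external labels, any non-$n$-bounded intermediate label forces both sides to vanish. The one step where genuine care is needed is the isotopy identification recognising \eqref{eq:IHlad} and \eqref{eq:IHlad2} as instances of $I{=}H$; this is the main (mild) obstacle, after which the rest of the argument is direct inspection.
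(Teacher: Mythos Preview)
Your proposal is correct and follows the same overall strategy as the paper: verify that each of the relations \eqref{eq:IHlad}--\eqref{eq:serre1} from Proposition~\ref{prop:LaddertoU} holds in $\Sp(\SL_n)$. The treatments of \eqref{eq:EE}, \eqref{eq:EF=FE}, and \eqref{eq:serre1} are identical to the paper's.

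The one genuine (if minor) difference is in how you handle \eqref{eq:IHlad} and \eqref{eq:IHlad2}. The paper first establishes the case $r=s=1$ by recognising the special instance of \eqref{eq:IH} with labels $1,k,1$, and then bootstraps to general $r,s$ by using \eqref{eq:EE} to split each rung into parallel $1$-rungs and iterating the special case. You instead apply the general $I{=}H$ relation \eqref{eq:IH} directly with labels $r,k_2,s$, which is cleaner and avoids the inductive step. Both arguments rely on the same isotopy identification (the two forks on the outer uprights are common to both sides, and the two merges on the middle upright form exactly an $I{=}H$ configuration); the paper's detour through $r=s=1$ buys nothing here since \eqref{eq:IH} is already stated for arbitrary labels. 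A small terminological point: the isotopy you invoke is ordinary planar isotopy of webs, not a pivotal move---no caps, cups, or duality are needed to bring the rungs into position.
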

\begin{proof}
To see that $ \Psi_m^n $ exists, we need only show that the diagrammatic relations of $ \dU^n(\gl_m) $ from Proposition \ref{prop:LaddertoU} are taken to the kernel of the functor $ \FSp(\SL_n) \to \Sp(\SL_n)$.

Relations (\ref{eq:EE}) and (\ref{eq:EF=FE}) hold in $\Sp(\SL_n) $ since they are exactly (\ref{eq:id1b}) and (\ref{eq:commutation}).

To show that (\ref{eq:IHlad}) holds in $ \Sp(\SL_n) $, we first observe that
\begin{equation*}
\begin{tikzpicture}[baseline]
\foreach \x/\y in {0/0,1/0,2/0,0/1,1/1,0/2} {
	\coordinate(z\x\y) at (\x+\y/2,\y/1.5);
}
\coordinate (z03) at (1,2);
\draw[mid>] (z00) node[below] {$1$} --  (z01);
\draw[mid>] (z01) -- node[left] {$k{+}1$} (z02);
\draw[mid>] (z10) node[below] {$k$} -- (z01);
\draw[mid>] (z20) node[below] {$1$} -- (z02);
\draw[mid>](z02) -- node[left] {$k{+}2$} (z03);
\end{tikzpicture}
 =
\begin{tikzpicture}[baseline]
\foreach \x/\y in {0/0,1/0,2/0,0/1,1/1,0/2} {
	\coordinate(z\x\y) at (\x+\y/2,\y/1.5);
}
\coordinate (z03) at (1,2);
\draw[mid>] (z00) node[below] {$1$} --  (z02);
\draw[mid>] (z10) node[below] {$k$} -- (z11);
\draw[mid>] (z20) node[below] {$1$} -- (z11);
\draw[mid>] (z11) -- node[right] {$k{+}1$} (z02);
\draw[mid>](z02) -- node[left] {$k{+}2$} (z03);
\end{tikzpicture}
\end{equation*}
 is a special case of (\ref{eq:IH}).
 This establishes (\ref{eq:IHlad}) in the special case $r=s=1$. For all other cases, we first use the previously established (\ref{eq:EE}) to replace the $r$ and $s$ rungs each with a collection of parallel $1$ rungs, and then repeatedly apply the special case.
 (Similarly for \eqref{eq:IHlad2} using the arrow reversal of \eqref{eq:IH}.)

Finally, relation (\ref{eq:serre1}) is exactly (\ref{eq:serre}) from Lemma \ref{lem:consequences}.
\end{proof}

We have now reached the situation described in the proof of the main result.  We have the diagram
\begin{equation}\label{diag:main2}
\xymatrix{
\Lad_m^n \ar[r] \ar[d] & \dU^n(\gl_m) \ar[dr]^{\Phi_m^n} \ar[d]_{\Psi_m^n} & \\
\FSp(\SL_n) \ar[r] & \Sp(\SL_n) \ar[r]^{\Gamma_n} & \Rep(\SL_n) \\
}
\end{equation}
Note that the left square of this diagram commutes by definition of $\Psi^n_m $.

\begin{prop}
\label{prop:commutes}
The right triangle of (\ref{diag:main2}) commutes.
\end{prop}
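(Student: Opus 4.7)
The approach is to verify that $\Gamma_n \circ \Psi_m^n$ and $\Phi_m^n$ agree on objects and on a generating family of morphisms in $\dU^n(\gl_m)$. Agreement on objects is immediate from the definitions: both functors send an $n$-bounded weight $\ul{k}=(k_1,\ldots,k_m)$ to $\Alt{k_1}{q}\bC_q^n \otimes \cdots \otimes \Alt{k_m}{q}\bC_q^n$. Since we work over $\bC(q)$, the divided powers $E_i^{(r)}$ and $F_i^{(r)}$ are scalar multiples of $E_i^r$ and $F_i^r$, so it suffices to check agreement on the generators $E_i\one_{\ul{k}}$ and $F_i\one_{\ul{k}}$. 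The two cases are parallel; I will outline only the $E_i$ case.

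To compute $\Gamma_n \circ \Psi_m^n(E_i\one_{\ul{k}})$ one first identifies the web $\Psi_m^n(E_i\one_{\ul{k}})$: it has identities on all uprights except $i$ and $i+1$, a fork $\fork{1}{k_{i+1}-1}{k_{i+1}}$ at the base of upright $i+1$, a fuse $\fuse{k_i}{1}{k_i+1}$ on upright $i$, and a $1$-labelled rung joining the two (the $1$-strand exits the fork to the left and enters the fuse from the right). Applying $\Gamma_n$ to the trivalent vertices according to its definition on forks and fuses produces the composition
\begin{equation*}
(M_{k_i,1} \otimes \mathrm{id}_{\Alt{k_{i+1}-1}{q}\bC_q^n}) \circ (\mathrm{id}_{\Alt{k_i}{q}\bC_q^n} \otimes M'_{1,k_{i+1}-1})
\end{equation*}
on the $i$-th and $(i{+}1)$-st tensor factors (with identities elsewhere). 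Substituting the explicit formulas for $M$ and $M'$ from Section \ref{sec:generating-morphisms} and evaluating on a basis vector $x_S \otimes x_T$ with $|S|=k_i$, $|T|=k_{i+1}$ yields, after cancellation,
\begin{equation*}
(-1)^{k_{i+1}-1} \sum_{t \in T \setminus S} (-q)^{\#\{s \in S \,:\, s<t\} - \#\{s \in T \,:\, s<t\}} \, x_{S \cup t} \otimes x_{T \setminus t}.
\end{equation*}

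On the other side, $\Phi_m^n(E_i\one_{\ul{k}})$ is by construction the action of $E_i \in U_q(\gl_m)$ on $\Alt{\bullet}{q}(\bC_q^n \otimes \bC_q^m)$ transported across the skew Howe isomorphism $T$ of Theorem \ref{th:qSkewHowe}(4), and Lemma \ref{lem:Eaction} gives this action explicitly. The one nontrivial step in extracting a formula on $x_{S_1} \otimes \cdots \otimes x_{S_m}$ is to rewrite the wedge product $z_{S_i,i} \wedge_q z_{r,i}$ (whose two factors live in the same column $i$) in the decreasing-order normal form $z_{S_i \cup r, i}$: the single-column relation $z_{k,i} \wedge_q z_{k',i} = -q\, z_{k',i} \wedge_q z_{k,i}$ for $k<k'$ lets one move $z_{r,i}$ into place and produces the factor $(-q)^{\#\{s \in S_i \,:\, s<r\}}$ (and zero when $r \in S_i$). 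Translating back through $T^{-1}$ recovers exactly the formula displayed above.

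The $F_i\one_{\ul{k}}$ case is entirely analogous: $\Psi_m^n(F_i\one_{\ul{k}})$ has a fork on upright $i$ and a fuse on upright $i+1$, so applying $\Gamma_n$ produces $(\mathrm{id} \otimes M_{1,k_{i+1}}) \circ (M'_{k_i-1,1} \otimes \mathrm{id})$, and its value on $x_S \otimes x_T$ matches the $F_p$ formula from Lemma \ref{lem:Eaction} after the same column-wise simplification. The main obstacle throughout is the careful sign and $q$-power bookkeeping, but once the two sides are laid side by side, the overall sign $(-1)^{k_{i+1}-1} = (-1)^{|S_{i+1}|+1}$ and the matched differences of cardinality counts in the $q$-exponents line up directly.
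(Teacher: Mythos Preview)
Your proof is correct and follows essentially the same strategy as the paper: check on the generators $E_i\one_{\ul k}$ and $F_i\one_{\ul k}$, identify $\Psi_m^n(E_i\one_{\ul k})$ as the single-rung ladder, apply $\Gamma_n$ to get the composition $(M_{k_i,1}\otimes I)\circ(I\otimes M'_{1,k_{i+1}-1})$, and compare with the action formula from Lemma~\ref{lem:Eaction}. The only difference is presentational: the paper stops once both sides are recognized as this composition of $M$ and $M'$ maps, whereas you additionally evaluate both sides explicitly on $x_S\otimes x_T$ and match the resulting sums term by term; this extra step is not needed but is certainly not wrong.
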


\begin{proof}
We proceed by an explicit calculation.  Consider $ E_j \one_{\ul{k}}$.  Via the map (\ref{eq:phimap}), we see that
$$
\Phi_m^n(E_j \one_{\ul{k}}): \Alt{k_1}{q} \bC_q^n \otimes \cdots \otimes \Alt{k_m}{q} \bC_q^n \rightarrow \Alt{l_1}{q} \bC_q^n \otimes \cdots \otimes \Alt{l_m}{q} \bC_q^n
$$
where $ \ul{l} = \ul{k} + \alpha_j$.

From Lemma \ref{lem:Eaction}, we see that
$$ \Phi_m^n(E_j \one_{\ul{k}}) = I^{\otimes j-1} \otimes M_{k_j,1} \otimes I^{\otimes m-j} \circ I^{\otimes j} \otimes M'_{1, k_{j+1} - 1} \otimes I^{\otimes m-j - 1} $$

On the other hand, consider the web
$$
w = \Psi^n_m(E_j \one_{\ul{k}}) =
\begin{tikzpicture}[baseline=20]
\laddercoordinates{3}{1}
\node[below] at (l10) {$k_j$};
\node[below] at (l20) {$k_{j+1}$};
\node[above] at (l11) {$k_j{+}1$};
\node[above] at (l21) {$k_{j+1}{-}1$};
\node at ($(l00)+(-1,1)$) {$\cdots$};
\ladderI{0}{0};
\ladderFn{1}{0}{}{}{$1$}
\ladderI{3}{0};
\node at ($(l30)+(1,1)$) {$\cdots$};
\end{tikzpicture}
$$

From the definition of $\Gamma_n $ in section \ref{sec:deffunctor} we see that
$$ \Gamma_n(w) = I^{\otimes j-1} \otimes M_{k_j,1} \otimes I^{\otimes m-j} \circ I^{\otimes j} \otimes M'_{1, k_{j+1} - 1} \otimes I^{\otimes m-j - 1}.$$
Thus $ \Gamma_n(\Psi_m^n(E_j \one_{\ul{k}})) = \Phi_m^n(E_j \one_{\ul{k}})$. A similar argument also holds for $F_j $ and since these generate $\dU(\gl_m)$ the result follows.
\end{proof}

\subsection{Surjectivity}

We would like to show that any web can be written, possibly using some relations, into ladder form.
This is not quite the case, simply because the boundary points of ladders are always oriented upwards. However, this is not a problem, because every object in $\Sp(\SL_n)$ is isomorphic, via a map made out of tags, to an upwards oriented one.

\begin{thm}
\label{thm:laddering}
Let $ D $ be a morphism in $ \Sp(\SL_n)$ between upwards oriented objects.  Then there exists $m \in {\mathbb N}$ and a morphism $ E \in \Lad_m^n $ such that $\Psi_m^n(E) = D $.
\end{thm}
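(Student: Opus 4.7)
The plan is to construct $E$ explicitly from a planar embedding of $D$, using only planar isotopy (built into $\FSp(\SL_n)$) together with the tag-handling relations of $\Sp(\SL_n)$.

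First I would normalize $D$ so that every edge is upwards oriented. Because the source and target of $D$ are all upwards oriented, any internal downwards $k$-edge is bounded above and below by U-turns, tags, or trivalent vertices. Using the tag-switching, tag-migration, and tag-cancellation relations \eqref{eq:switch}, \eqref{eq:tag-migration}, \eqref{eq:tag-migration2}, and \eqref{eq:cancel-tags}, I would replace every such edge by an upwards $(n-k)$-edge flanked by a matching pair of opposing tags. In the eventual ladder, each such tag pair will correspond to an $n$-labeled segment on an upright, which $\Psi^n_m$ will put back in the form of the original tag pair.

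Next I would put the resulting diagram into generic Morse position, so that all trivalent vertices and tags occur at distinct heights. Let $m$ be at least the maximum number of strand intersections across any horizontal slice (plus padding for the source and target sequences of $D$), and lay out $m$ equally spaced vertical uprights; empty positions carry $0$-labels. Isotope the edges of $D$ so that every active strand sits on one of these uprights at every height. By planarity, the two strands meeting at any fuse or fork are topologically adjacent in that slice, hence after isotopy they sit on adjacent \emph{active} uprights, possibly separated by several $0$-labeled phantom uprights. A short sequence of ``bending'' rungs (rungs with one endpoint upright carrying $0$) can be inserted to bring them to truly consecutive positions; such bending rungs degenerate to plain diagonal strands under $\Psi^n_m$ once the $0$-labels are deleted.

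With this layout, each fuse $(k,l) \to (k+l)$ between consecutive uprights $i, i+1$ is the image under $\Psi^n_m$ of the generator $F_i^{(l)} \one_{\underline{k}}$; each fork $(k+l) \to (k,l)$ is the image of $E_i^{(l)} \one_{\underline{k}}$; and each surviving $n$-labeled upright segment reproduces, after $\Psi^n_m$ converts its endpoints into tags, the tag pair introduced in the first step. Define $E \in \Lad^n_m$ to be the vertical composition of these elementary generators, ordered by the heights of the corresponding features. Then $\Psi^n_m(E)$ is planar isotopic to $D$ and hence equal to it in $\Sp(\SL_n)$. The main obstacle is the first step: systematically eliminating all internal downwards edges via tag manipulations, and verifying that the bookkeeping of the introduced tag pairs faithfully matches what the ladder produces via $\Psi^n_m$. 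Once that reduction is complete, the remaining work is a routine exercise in planar geometry.
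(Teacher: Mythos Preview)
Your strategy is genuinely different from the paper's, and the difference is instructive. The paper never attempts your first step. Instead it bends the entire web sideways: after routing the boundary through a fixed ``elbow'' piece $D_1'$, it puts the remaining web $D_2$ in Morse position with respect to the \emph{horizontal} coordinate, so that every cap, cup, and trivalent vertex is read from left to right. Then, for each such elementary piece, it superimposes a single new vertical strand (labelled either $0$ or $n$, depending on the orientation) just to the right of that piece and performs one explicit local replacement, turning the feature into one or two rungs against the new upright. The verification that each replacement is an equality in $\Sp(\SL_n)$ uses only \eqref{eq:cancel-tags}, \eqref{eq:tag-migration}, and \eqref{eq:tag-migration2}. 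The advantage is that nothing global needs to be proved: every downward edge, every cap and cup, is consumed by one local rule, and the number of uprights is simply the number of elementary pieces.

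Your approach can be made to work, but the step you yourself flag as ``the main obstacle'' is essentially the entire proof, and it is not done. Replacing a single downward $k$-segment by an upward $(n-k)$-segment with two tags is fine, but those tags then sit against caps, cups, or trivalent vertices of arbitrary planar orientation, and you must show (using tag migration) that each such configuration becomes a standard upward fuse, fork, or sink/source tag. You would be re-proving, in effect, that every web in $\Sp(\SL_n)$ is isotopic to a monotone one---true, but not automatic. There is also a small technical slip: after normalisation, a sink-tag at the top of two adjacent uprights sends $(k,n{-}k)$ to $(n,0)$ or $(0,n)$, so the padding at the boundary of your ladder must allow $n$-labelled uprights as well as $0$-labelled ones; padding only with $0$'s will not suffice (consider a closed $k$-circle). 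The paper's horizontal-Morse trick handles both issues at once by making the choice of $0$ versus $n$ local to each feature.
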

\begin{proof}
Given any such diagrammatic morphism $D \in \Sp(\SL_n)$,
we first write express every tag in the diagram as the end of a strand labelled by $n$ connecting that tag to the edge of the diagram; anywhere this strand crosses an existing strand we interpret the crossing as a pair of trivalent vertices via Equation \eqref{eq:cancel-tags}. Then, if the total number of incoming and outgoing strands are not equal, we introduce new strands labelled by $0$ as needed to balance.

Now, just by a planar isotopy, we can write $D$ as
\begin{align*}
\begin{tikzpicture}[baseline=12]
\coordinate (a) at (0,0);
\coordinate (b) at (2,1.5);
\foreach \n/\x in {1/0.25,2/0.75, 3/1.25, 4/1.75} {
 \draw (\x,-0.4) coordinate (b\n) -- (\x, 1.9) coordinate (t\n);
}
\draw[fill=white] (a) rectangle (b);
\node at ($(a)!.5!(b)$) {$D$};
\end{tikzpicture}
&\;=\;
\begin{tikzpicture}[baseline=12]
\foreach \n/\x in {1/0.25,2/0.75, 3/1.25, 4/1.75} {
 \draw (\x,-0.4) coordinate (b\n) -- (\x, 1.9) coordinate (t\n);
}
\draw[fill=white] (a) rectangle node {$D_1$} (b);
\foreach \y in {0.6, 0.45, 0.3, 0.15} {
 \draw  (2,\y) -- (2.4, \y);
 \draw  (2,1.5-\y) -- (2.4, 1.5-\y);
}
\draw[fill=white] (2.4,0) rectangle node {$D_2$} (4.4,1.5);
\end{tikzpicture}
\end{align*}
where $$
D_1  =
\begin{tikzpicture}[baseline=7.5,x=0.75cm,y=0.75cm]
\foreach \n/\x/\y in {1/0.25/0.6,2/0.75/0.45, 3/1.25/0.3, 4/1.75/0.15} {
 \coordinate (b\n)  at  (\x, -0.4);
 \draw[mid>] (b\n) to[out=90,in=180] (2.2,\y);
 \coordinate (t\n) at (\x, 1.9) ;
 \draw[mid<] (t\n) to[out=-90,in=180] (2.2,1.5-\y);
}
\end{tikzpicture}
$$ and $D_2$ is in Morse position relative to the $x$-coordinate (that is, no two critical $x$-values or $x$-coordinates of vertices coincide) and further each trivalent vertex has two edges pointing to the left and one to the right. This can always be achieved at the expense of extra critical $x$-values in the strings.

Now replace $D_1$ with
\begin{equation*}
D'_1 =
\begin{tikzpicture}[baseline=30,x=1.5cm, y=1.5cm]
\foreach \n/\x/\y in {1/0.25/0.6,2/0.75/0.45, 3/1.25/0.3, 4/1.75/0.15} {
 \coordinate (b\n)  at  (\x, -0.4);
 \draw[lower>, upper>] (b\n) to[out=90,in=180] coordinate (mb\n) (2.2,\y);
 \coordinate (t\n) at (\x, 1.9) ;
 \draw[lower<, upper<] (t\n) to[out=-90,in=180] coordinate (mt\n) (2.2,1.5-\y);
 \draw[dashed,mid>] (mb\n) --node[left=0.2pt] {\small $0$} (mt\n);
}
\end{tikzpicture}
=
\begin{tikzpicture}[baseline=10,x=0.6cm,y=0.375cm]
\foreach \x in {2,3,4,5} {
	\draw[dashed] (-\x,-3) -- (-\x,5);
}
\foreach \n in {1,2,3,4} {
	\foreach \m in {1,...,\n} {
		\draw[thick](-\m,5-\n+\m/2) -- +(-1,0) -- +(-1,0.5) node[coordinate] (z\n\m) {};
		\draw[thick](-\m,-3+\n-\m/2) -- +(-1,0) -- +(-1,-0.5) node[coordinate] (w\n\m) {};		
	}
	\draw[thick, mid>] (z\n\n) -- (-\n-1,5.5);
	\draw[thick, mid<] (w\n\n) -- (-\n-1,-3.5);
}
\end{tikzpicture}
\end{equation*}

Next, to the right of each elementary piece of the Morse decomposition of $D_2$, superimpose either a vertical $0$ strand or a vertical $n$ strand from the bottom to the top of the diagram and then make one of the following local replacements in the neighbourhood of that elementary piece:
\begin{align}
\tikz[baseline=6]{
\draw[mid>] (-1,0.5) to[out=0,in=0] node[right] {$k$} (-1,-0.5);
\draw[mid>, dashed] (0,-1) --node[right] {$n$} (0,1);
} & \mapsto
\tikz[baseline=6]{
\draw[mid>, dashed] (0,-1) --node[right] {$n$} (0,-0.5);
\draw[mid>] (0,-0.5) --node[right] {$n-k$} (0,0.5);
\draw[mid>, dashed] (0,0.5) --node[right] {$n$} (0,1);
\draw[mid>] (-1,0.5) --node[above] {$k$} (0,0.5);
\draw[mid<] (-1,-0.5) --node[below] {$k$} (0,-0.5);
}
&
\tikz[baseline=6]{
\draw[mid<] (-1,0.5) to[out=0,in=0] node[right] {$k$} (-1,-0.5);
\draw[mid>, dashed] (0,-1) --node[right] {$0$} (0,1);
} & \mapsto
\tikz[baseline=6]{
\draw[mid>, dashed] (0,-1) --node[right] {$0$} (0,-0.5);
\draw[mid>] (0,-0.5) --node[right] {$k$} (0,0.5);
\draw[mid>, dashed] (0,0.5) --node[right] {$0$} (0,1);
\draw[mid<] (-1,0.5) --node[above] {$k$} (0,0.5);
\draw[mid>] (-1,-0.5) --node[below] {$k$} (0,-0.5);
}
\notag
\displaybreak[1]\\\notag\\
\tikz[baseline=6]{
\draw[mid>] (2,0.5) -- (1,0.5) to[out=180,in=180] node[left] {$k$} (1,-0.5) -- (2,-0.5);
\draw[mid>, dashed] (1.25,-1) --node[right] {$n$} (1.25,1);
} & \mapsto
\tikz[baseline=6]{
\draw[mid>, dashed] (0,-1) --node[left] {$n$} (0,-0.5);
\draw[mid>] (0,-0.5) --node[left] {$n-k$} (0,0.5);
\draw[mid>, dashed] (0,0.5) --node[left] {$n$} (0,1);
\draw[mid>] (1,0.5) --node[above] {$k$} (0,0.5);
\draw[mid<] (1,-0.5) --node[below] {$k$} (0,-0.5);
}
&
\tikz[baseline=6]{
\draw[mid<] (2,0.5) -- (1,0.5) to[out=180,in=180] node[left] {$k$} (1,-0.5) -- (2,-0.5);
\draw[mid>, dashed] (1.25,-1) --node[right] {$0$} (1.25,1);
} & \mapsto
\tikz[baseline=6]{
\draw[mid>, dashed] (0,-1) --node[left] {$0$} (0,-0.5);
\draw[mid>] (0,-0.5) --node[left] {$k$} (0,0.5);
\draw[mid>, dashed] (0,0.5) --node[left] {$0$} (0,1);
\draw[mid<] (1,0.5) --node[above] {$k$} (0,0.5);
\draw[mid>] (1,-0.5) --node[below] {$k$} (0,-0.5);
}
\notag
\displaybreak[1]\\\notag\\
\tikz[baseline=6]{
\draw[mid>] (-1,0.5) node[left] {$k$} -- (-0.5,0);
\draw[mid>] (-1,-0.5) node[left] {$l$}-- (-0.5,0);
\draw[upper>] (-0.5,0) -- (1,0) node[right] {$k{+}l$};
\draw[dashed,lower>] (0,-1) node[below] {0} -- (0,1) node[above] {0};
}
& \mapsto
\tikz[baseline=6]{
\draw[mid>] (-1,-0.5) node[left] {$l$} -- (0,-0.5);
\draw[mid>] (-1,0) node[left] {$k$} -- (0,0);
\draw[mid>] (0,0.5) -- (1,0.5) node[right] {$k{+}l$} ;
\draw[dashed,mid>] (0,-1) node[below] {0} -- (0,-0.5);
\draw[mid>] (0,-0.5) -- (0,0);
\draw[mid>] (0,0) -- (0,0.5);
\draw[dashed,mid>] (0,0.5) -- (0,1) node[above] {0};
}
&
\tikz[baseline=6]{
\draw[mid<] (-1,0.5) node[left] {$k$} -- (-0.5,0);
\draw[mid<] (-1,-0.5) node[left] {$l$}-- (-0.5,0);
\draw[upper<] (-0.5,0) -- (1,0) node[right] {$k{+}l$};
\draw[dashed,lower>] (0,-1) node[below] {0} -- (0,1) node[above] {0};
}
& \mapsto
\tikz[baseline=6]{
\draw[mid<] (-1,0) node[left] {$l$} -- (0,0);
\draw[mid<] (-1,0.5) node[left] {$k$} -- (0,0.5);
\draw[mid<] (0,-0.5) -- (1,-0.5) node[right] {$k{+}l$} ;
\draw[dashed,mid>] (0,-1) node[below] {0} -- (0,-0.5);
\draw[mid>] (0,-0.5) -- (0,0);
\draw[mid>] (0,0) -- (0,0.5);
\draw[dashed,mid>] (0,0.5) -- (0,1) node[above] {0};
}
\notag
\displaybreak[1]\\\notag\\
\tikz[baseline=6]{
\draw[mid<] (-1,0.5) node[left] {$k{+}l$} -- (-0.5,0);
\draw[mid>] (-1,-0.5) node[left] {$k$}-- (-0.5,0);
\draw[upper<] (-0.5,0) -- (1,0) node[right] {$l$};
\draw[dashed,lower>] (0,-1) node[below] {0} -- (0,1) node[above] {0};
}
& \mapsto
\tikz[baseline=6]{
\draw[mid>] (-1,-0.5) node[left] {$k$} -- (0,-0.5);
\draw[mid<] (-1,0.5) node[left] {$k{+}l$} -- (0,0.5);
\draw[mid<] (0,0) -- (1,0) node[right] {$l$} ;
\draw[dashed,mid>] (0,-1) node[below] {0} -- (0,-0.5);
\draw[mid>] (0,-0.5) -- (0,0);
\draw[mid>] (0,0) -- (0,0.5);
\draw[dashed,mid>] (0,0.5) -- (0,1) node[above] {0};
}
&
\tikz[baseline=6]{
\draw[mid<] (-1,0.5) node[left] {$k$} -- (-0.5,0);
\draw[mid>] (-1,-0.5) node[left] {$k{+}l$}-- (-0.5,0);
\draw[upper>] (-0.5,0) -- (1,0) node[right] {$l$};
\draw[dashed,lower>] (0,-1) node[below] {0} -- (0,1) node[above] {0};
}
& \mapsto
\tikz[baseline=6]{
\draw[mid>] (-1,-0.5) node[left] {$k{+}l$} -- (0,-0.5);
\draw[mid<] (-1,0.5) node[left] {$k$} -- (0,0.5);
\draw[mid>] (0,0) -- (1,0) node[right] {$l$} ;
\draw[dashed,mid>] (0,-1) node[below] {0} -- (0,-0.5);
\draw[mid>] (0,-0.5) -- (0,0);
\draw[mid>] (0,0) -- (0,0.5);
\draw[dashed,mid>] (0,0.5) -- (0,1) node[above] {0};
}
\notag
\displaybreak[1]\\\notag\\
\tikz[baseline=6]{
\draw[mid>] (-1,0.5) node[left] {$k$} -- (-0.5,0);
\draw[mid<] (-1,-0.5) node[left] {$k{+}l$}-- (-0.5,0);
\draw[upper<] (-0.5,0) -- (1,0) node[right] {$l$};
\draw[dashed,lower>] (0,-1) node[below] {$n$} -- (0,1) node[above] {$n$};
}
& \mapsto
\tikz[baseline=6]{
\draw[mid<] (-1,-0.5) node[left] {$k{+}l$} -- (0,-0.5);
\draw[mid>] (-1,0.5) node[left] {$k$} -- (0,0.5);
\draw[mid<] (0,0) -- (1,0) node[right] {$l$} ;
\draw[dashed,mid>] (0,-1) node[below] {$n$} -- (0,-0.5);
\draw[mid>] (0,-0.5) -- (0,0);
\draw[mid>] (0,0) -- (0,0.5);
\draw[dashed,mid>] (0,0.5) -- (0,1) node[above] {$n$};
}
&
\tikz[baseline=6]{
\draw[mid>] (-1,0.5) node[left] {$k{+}l$} -- (-0.5,0);
\draw[mid<] (-1,-0.5) node[left] {$k$}-- (-0.5,0);
\draw[upper>] (-0.5,0) -- (1,0) node[right] {$l$};
\draw[dashed,lower>] (0,-1) node[below] {$n$} -- (0,1) node[above] {$n$};
}
& \mapsto
\tikz[baseline=6]{
\draw[mid<] (-1,-0.5) node[left] {$k$} -- (0,-0.5);
\draw[mid>] (-1,0.5) node[left] {$k{+}l$} -- (0,0.5);
\draw[mid>] (0,0) -- (1,0) node[right] {$l$} ;
\draw[dashed,mid>] (0,-1) node[below] {$n$} -- (0,-0.5);
\draw[mid>] (0,-0.5) -- (0,0);
\draw[mid>] (0,0) -- (0,0.5);
\draw[dashed,mid>] (0,0.5) -- (0,1) node[above] {$n$};
}
\label{eq:n-right-of-trivalent}
\end{align}
and then finally replacing each other instance where a superimposed strand crosses a horizontal strand of $D_2$ as follows
\begin{align*}
\tikz[baseline=6]{
\draw[mid>] (-1,0) -- (0,0);
\draw (0,0) --node[below] {$k$} (1,0);
\draw[mid>, dashed] (0,-1) -- (0,0);
\draw[dashed] (0,0) --node[right] {0} (0,1);
} & \mapsto
\tikz[baseline=6]{
\draw[mid>] (-1,-0.2) -- (0,-0.2);
\draw[mid>] (0,-0.2) -- (0,0.2);
\draw[mid>] (0,0.2) --node[below] {$k$} (1,0.2);
\draw[mid>,dashed]  (0,-1) -- (0,-0.2);
\draw[dashed] (0,0.2) --node[right] {0} (0,1);
}
&
\tikz[baseline=6]{
\draw[mid<] (-1,0) -- (0,0);
\draw (0,0) --node[below] {$k$} (1,0);
\draw[mid>, dashed] (0,-1) -- (0,0);
\draw[dashed] (0,0) --node[right] {0} (0,1);
} & \mapsto
\tikz[baseline=6]{
\draw[mid<] (-1,0.2) -- (0,0.2);
\draw[mid>] (0,-0.2) -- (0,0.2);
\draw[mid<] (0,-0.2) --node[below] {$k$} (1,-0.2);
\draw[mid>,dashed]  (0,-1) -- (0,-0.2);
\draw[dashed] (0,0.2) --node[right] {0} (0,1);
} \displaybreak[1] \\\notag\\
\tikz[baseline=6]{
\draw[mid>] (-1,0) -- (0,0);
\draw (0,0) --node[below] {$k$} (1,0);
\draw[mid>, dashed] (0,-1) -- (0,0);
\draw[dashed] (0,0) --node[right] {$n$} (0,1);
} & \mapsto
\tikz[baseline=6]{
\draw[mid>] (-1,0.3) -- node[above] {$k$} (0,0.3);
\draw[mid>] (0,-0.3) --node[left] {$n{-}k$} (0,0.3);
\draw[mid>] (0,-0.3) --node[below] {$k$} (1,-0.3);
\draw[mid>,dashed]  (0,-1) -- (0,-0.3);
\draw[mid>,dashed] (0,0.3) --node[right] {$n$} (0,1);
}
&
\tikz[baseline=6]{
\draw[mid<] (-1,0) -- (0,0);
\draw (0,0) --node[below] {$k$} (1,0);
\draw[mid>, dashed] (0,-1) -- (0,0);
\draw[dashed] (0,0) --node[right] {$n$} (0,1);
} & \mapsto
\tikz[baseline=6]{
\draw[mid<] (-1,-0.3) --node[below] {$k$} (0,-0.3);
\draw[mid>] (0,-0.3) --node[left] {$n{-}k$} (0,0.3);
\draw[mid<] (0,0.3) --node[below] {$k$} (1,0.3);
\draw[mid>,dashed]  (0,-1) -- (0,-0.3);
\draw[mid>,dashed] (0,0.3) --node[right] {$n$} (0,1);
}
\end{align*}
to obtain $D'_2$. Now one can check that $D'_2$ is in fact equal to $D_2$, using only a few relations from the spider. In particular, for each of the replacements above involving a $0$ strand, when we delete the $0$ strands we see that nothing has changed. In the replacements involving an $n$ strand but no trivalent vertices, after removing the $n$-strands and replacing their endpoints with tags, we find we can cancel the tags according to Equation \eqref{eq:cancel-tags}. Finally, in the replacements involving an $n$ strand to the right of a trivalent vertex (those in \eqref{eq:n-right-of-trivalent}), we need to use Equations \eqref{eq:tag-migration} and \eqref{eq:tag-migration2} to move one tag past the trivalent vertex, and then Equation \eqref{eq:cancel-tags} to cancel them.

In each of the local replacements used to form $D'_2$ the new diagram consists of part of an upright of the ladder, along with several `half-rungs'. It is easy to see that all of these half-rungs come in matching pairs forming complete rungs, except at the left margin of $D'_2$. Similarly, $D'_1$ is a ladder except that it has half-rungs along its right margin. The horizontal juxtaposition $D'_1 D'_2$ is then a ladder.
Since $D$ is equivalent in $\Sp(\SL_n)$ to $D'_1 D'_2$, we are finished.
\end{proof}

\section{Braidings}
Let $ \dU^n(\gl_\bullet) = \bigoplus_{m=1}^\infty \dU^n(\gl_m) $.  This is a category whose objects are $n$-bounded weights $\ul{k} = (k_1, \dots, k_m) $ and whose morphisms are given by
$$ \Hom((k_1, \dots, k_m), (l_1, \dots, l_p)) = \begin{cases} \one_{(l_1, \dots, l_p)}  \dU^n(\gl_m) \one_{(k_1, \dots, k_m)} \ \ & \text{ if } m =p \\
0 \ \ & \text{ otherwise.}
\end{cases}
$$
The functors $ \Phi^n_m $ combine together to a functor $ \Phi^n : \dU^n(\gl_\bullet) \rightarrow \Rep(\SL_n) $ which factors through $ \Sp(\SL_n)$. Our goal in this section is to define a braided monoidal category structure on $ \dU^n(\gl_\bullet) $ and to show that $\Phi^n $ preserves the braiding.

\subsection{Braided monoidal category structure on $\dU^n(\gl_\bullet)$} \label{se:braiding}
First, we define a monoidal category structure on $\dU^n(\gl_\bullet)$.  The tensor product of objects is given by concatenation $(k_1, \dots, k_m) \otimes (l_1, \dots, l_p) = (k_1, \dots, k_m, l_1, \dots, l_p) $.  The tensor product of morphisms comes from the obvious embedding $ U_q(\gl_{m}) \otimes U_q(\gl_{p}) \rightarrow U_q(\gl_{m+p})$.  From the perspective of ladders, the tensor product is given by horizontal juxtaposition.  This monoidal structure is associative with trivial associator.

Recall that a braiding $\beta$ on a monoidal category $ \mathcal C$ is system of natural isomorphisms $\beta_{V, W} : V \otimes W \rightarrow W \otimes V $ satisfying the ``hexagon equations''
\begin{align} \label{eq:hex}
\beta_{U \otimes V, W} & = \beta_{U, W}  \otimes I_V \circ I_U \otimes \beta_{V, W} \\
\intertext{and}
\beta_{U, V \otimes W} & = I_V \otimes \beta_{U, W} \circ \beta_{U, V}  \otimes I_W \notag
\end{align}
We will now define a braiding on $ \dU^n(\gl_\bullet) $.

For each $n$-bounded weight $ \ul{k} $ and for $ 1 \le i < m $, we set $ s_i(\ul{k}) = (k_1, \dots, k_{i+1}, k_i, \dots, k_m) $.  Following Lusztig \cite[5.2.1]{MR1227098}, we define $ T''_{i,-1} \one_{\ul{k}} \in \one_{s_i(\ul{k})} \dU^n(\gl_m) \one_{\ul{k}} $ by the formula
\begin{equation}\label{eq:temp3}
T''_{i,-1} \one_{\ul{k}} = \sum_{a, b, c \ge 0, -a +b -c = k_i - k_{i+1}} (-1)^b q^{ac - b} E_i^{(a)} F_i^{(b)} E_i^{(c)} \one_{\ul{k}}
\end{equation}
Note that these sums are actually finite since we are working in the truncation $ \dU^n(\gl_m) $.


Lusztig's elements admit the following simplified form, a fact which seems to have been first observed by Chuang-Rouquier \cite{MR2373155} when $ q = 1 $.
\begin{lem} \label{le:singlesum}
$T''_{i,-1} \one_{\ul{k}} = \sum_{a, b \ge 0, -a +b = k_i - k_{i+1}} (-q)^{-b} E_i^{(a)} F_i^{(b)} \one_{\ul{k}} $
\end{lem}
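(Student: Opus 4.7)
The plan is to derive the single-sum formula from the definition \eqref{eq:temp3} by commuting $F_i^{(b)} E_i^{(c)}$ past each other using the companion form of \eqref{rel:1}, merging consecutive $E$-powers via \eqref{rel:5}, and then collapsing the remaining inner sum with a standard $q$-binomial identity. Throughout, write $\lambda = \la \ul{k}, \alpha_i \ra = k_i - k_{i+1}$.

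First I would record the ``mirror'' of \eqref{rel:1}, obtained by applying the Chevalley involution of $U_q(\sl_2)$ (which exchanges $E_i \leftrightarrow F_i$, sends $K_i \mapsto K_i^{-1}$, and hence $\one_\mu \mapsto \one_{-\mu}$):
$$F_i^{(b)} E_i^{(c)} \one_{\ul{k}} = \sum_{t \ge 0} \qBinomial{-\lambda + b - c}{t} E_i^{(c-t)} F_i^{(b-t)} \one_{\ul{k}}.$$
Substituting this into \eqref{eq:temp3} and applying $E_i^{(a)} E_i^{(c-t)} = \qBinomial{a+c-t}{a} E_i^{(a+c-t)}$, I reindex via $a' = a + c - t$, $b' = b - t$, and $u = c - t$. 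The constraint $-a + b - c = \lambda$ then becomes $-a' + b' = \lambda$, which in particular gives $-\lambda + b' - u = a' - u$. A short piece of algebra shows that the exponent $ac - b$ rearranges as $u(a' - u) - b' + (a' - u - 1) t$, so the accumulated coefficient of $E_i^{(a')} F_i^{(b')} \one_{\ul{k}}$ becomes
$$(-1)^{b'} q^{-b'} \sum_{u \ge 0} q^{u(a'-u)} \qBinomial{a'}{u} \biggl( \sum_{t \ge 0} (-1)^{t} q^{(a'-u-1)t} \qBinomial{a'-u}{t} \biggr).$$

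The main step is then to invoke the $q$-binomial identity
$$\sum_{t = 0}^{N} (-1)^{t} q^{(N-1)t} \qBinomial{N}{t} = \delta_{N,0},$$
which is the $q$-analog of $(1-1)^N = 0$ and can be proved by a one-line induction using the Pascal recursion $\qBinomial{N}{t} = q^{t} \qBinomial{N-1}{t} + q^{-(N-t)} \qBinomial{N-1}{t-1}$. With $N = a' - u$ this forces $u = a'$, the surviving $u$-term equals $q^{0}\qBinomial{a'}{a'} = 1$, and the coefficient of $E_i^{(a')} F_i^{(b')} \one_{\ul{k}}$ collapses to $(-1)^{b'} q^{-b'} = (-q)^{-b'}$, which is exactly the claim.

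The only real obstacle is the bookkeeping: one has to see that the exponent $ac - b$ splits so that the $t$-dependent part is precisely $(a' - u - 1) t$, matching the exponent in the $q$-binomial identity. Everything else is routine, and finiteness of all sums follows from working in the $n$-bounded truncation $\dU^n(\gl_m)$.
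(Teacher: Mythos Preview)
Your argument is correct: the mirror commutation relation, the reindexing $(a',b',u,t)$, the exponent bookkeeping $ac-b = u(a'-u)-b'+(a'-u-1)t$, and the $q$-binomial identity $\sum_{t=0}^{N}(-1)^t q^{(N-1)t}\qBinomial{N}{t}=\delta_{N,0}$ all check out, and the ranges of summation match after the change of variables.

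The paper organizes the same computation differently. It simply splits the defining triple sum \eqref{eq:temp3} into the $c=0$ part (which is already the claimed single sum, since $(-1)^b q^{-b}=(-q)^{-b}$) and the $c>0$ part, and asserts that the latter vanishes ``by direct calculation starting with the usual commutation relation $(E_iF_i-F_iE_i)\one_{\ul{k}}=[k_i-k_{i+1}]_q$.'' Your approach instead rewrites the entire sum in the basis $E_i^{(a')}F_i^{(b')}$ and lets the $q$-binomial identity kill all but one term. The two are equivalent---your inner $t$-sum is exactly the mechanism by which the $c>0$ contributions cancel---but your version has the advantage of being fully explicit, whereas the paper leaves the cancellation as an exercise.
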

\begin{proof}
By the definition of $ T''_{i,-1}$, we must show that
$$\sum_{a, b, c \ge 0, -a +b -c = k_i - k_{i+1}, c > 0 } (-1)^b q^{ac - b} E_i^{(a)} F_i^{(b)} E_i^{(c)} \one_{\ul{k}} = 0 $$
This is easily proven by direct calculation starting with the usual commutation relation $ (E_i F_i - F_i E_i)\one_{\ul{k}} = [k_i - k_{i+1}]_q $.
\end{proof}

We will need the following modification of Lusztig's definition in order to later match with the braiding on $ \Rep(\SL_n) $,
$$ T_i \one_{\ul{k}} := (-1)^{k_i+k_i k_{i+1}} q^{k_i - \frac{k_i k_{i+1}}{n}} T''_{i,-1} \one_{\ul{k}}. $$

\begin{lem} \label{le:braid}
The elements $ T_i $ are invertible.  Moreover they satisfy the braid relations
$$ T_{i+1} T_{i} T_{i+1} \one_{\ul{k}} = T_i T_{i+1} T_i \one_{\ul{k}} \quad \text{ and } \quad T_i T_j \one_{\ul{k}} = T_j T_i \one_{\ul{k}}, \text{ if } |i - j| \ge 2. $$
\end{lem}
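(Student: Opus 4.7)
The plan is to reduce both assertions to Lusztig's corresponding results for $T''_{i,-1}$ in $\dU(\gl_m)$, plus a bookkeeping check that the scalar corrections
$$c_i(\ul{k}) := (-1)^{k_i + k_i k_{i+1}} q^{k_i - k_i k_{i+1}/n}$$
multiply correctly along the two sides of each relation. Invertibility is essentially automatic: $c_i(\ul{k})$ is a unit in $\bC(q)$, and Lusztig constructs an explicit inverse for $T''_{i,-1} \one_{\ul{k}}$ in $\dU(\gl_m)$ (see \cite[\S5.2]{MR1227098}); since both $\ul{k}$ and $s_i(\ul{k})$ are $n$-bounded the identity morphisms at source and target survive in $\dU^n(\gl_m)$, so the inverse descends. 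Note that in the truncation, the \emph{a priori} infinite sum defining $T''_{i,-1}$ collapses to a finite one: terms whose intermediate weight $\ul{k} - b\alpha_i$ falls outside the $n$-bounded region are zero in $\dU^n(\gl_m)$.

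For the commuting relation with $|i-j| \ge 2$: relations \eqref{rel:2} and \eqref{rel:4} give that $E_i, F_i$ commute with $E_j, F_j$, so $T''_{i,-1}$ commutes with $T''_{j,-1}$. Because $s_i$ does not affect $k_j, k_{j+1}$ (and vice versa), we have $c_i(s_j \ul{k}) = c_i(\ul{k})$ and $c_j(s_i \ul{k}) = c_j(\ul{k})$, so the accumulated scalars along $T_iT_j$ and $T_jT_i$ are both equal to $c_i(\ul{k})c_j(\ul{k})$.

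For the braid relation $T_{i+1} T_i T_{i+1} \one_{\ul{k}} = T_i T_{i+1} T_i \one_{\ul{k}}$, the underlying identity $T''_{i+1,-1} T''_{i,-1} T''_{i+1,-1} = T''_{i,-1} T''_{i+1,-1} T''_{i,-1}$ is one of the fundamental theorems of Lusztig's quantum Weyl group action on integrable modules (see \cite[Ch.~39]{MR1227098}), and it descends to $\dU^n(\gl_m)$ because the quotient map is a surjection of $\bC(q)$-linear categories. What remains is to verify that the three scalar corrections accumulated along each side agree. Writing $(k,l,m) := (k_i, k_{i+1}, k_{i+2})$, the product along $T_{i+1} T_i T_{i+1}$ reads $c_{i+1}(k,l,m)\, c_i(k,m,l)\, c_{i+1}(m,k,l)$ while along $T_i T_{i+1} T_i$ it reads $c_i(k,l,m)\, c_{i+1}(l,k,m)\, c_i(l,m,k)$. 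A short, symmetric expansion shows both products yield the same expression, of the form $(-1)^{l + kl + km + lm}\, q^{2k + l - (kl + km + lm)/n}$; in particular the ``anomalous'' $q^{-k_i k_{i+1}/n}$ factor in $c_i$ is exactly what is needed to make the fractional exponents come out symmetric in $k,l,m$.

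The main obstacle is conceptual rather than computational: locating Lusztig's braid relation in the precise form needed for the idempotented form of $U_q(\gl_m)$ (rather than $U_q(\sl_m)$), and justifying that the relation survives the quotient by the non-$n$-bounded idempotents. Once that is in hand, the scalar check is a finite, entirely mechanical computation, and the symmetry displayed above is the reason for the otherwise peculiar-looking definition of $T_i$ from $T''_{i,-1}$.
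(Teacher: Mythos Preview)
Your proof is correct and follows exactly the paper's approach: cite Lusztig for the invertibility and braid relations of the $T''_{i,-1}$, then deduce the same for the $T_i$ via the scalar correction. The paper simply says ``the corresponding result for the $T_i$ follows immediately'' without spelling out the $c_i(\ul{k})$ bookkeeping that you carry out explicitly; your check that both sides accumulate the same scalar $(-1)^{l+kl+km+lm}q^{2k+l-(kl+km+lm)/n}$ is accurate and is precisely the content hidden in the paper's word ``immediately.''
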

\begin{proof}
Lusztig proved that the $T''_{i,-1} $ are invertible (section 5.2.3 in \cite{MR1227098}) and that they satisfy the braid relations (section 39.4.1 in \cite{MR1227098}).  The corresponding result for the $ T_i $ follows immediately.
\end{proof}

From the lemma, we can define $ T_w \one_{\ul{k}} $ for any $ w\in S_m $ by using the usual lift of $ S_m $ into the braid group $ B_m $.

Now we are in a position to define the braiding on our category.  For any two objects $ \ul{k} = (k_1, \dots, k_m), \ul{l} = (l_1, \dots, l_p) $, we define $ \beta_{\ul{k}, \ul{l}} = T_w \one_{(k_1, \dots, k_m, l_1, \dots, l_p)} $ where $ w \in S_{m+p} $ is defined by
$$ w(i) = \begin{cases} p + i \ \ & \text{ if } i \le m  \\
 i - m \ \ & \text{ if } i > m.
 \end{cases}
 $$

\begin{lem} \label{le:natural}
The map $\beta $ is a natural transformation from the bifunctor $- \bigotimes -$ to the bifunctor $- \bigotimes^{\text{op}} -$.
\end{lem}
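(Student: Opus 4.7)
The plan is to reduce the naturality claim to a small set of elementary identities. By bilinearity of $\otimes$ and $\circ$, together with the fact that $\dU^n(\gl_m)$ is generated by the divided-power morphisms $E_i^{(r)}\one_{\ul{k}}$ and $F_i^{(r)}\one_{\ul{k}}$, it suffices to verify naturality when $g = \id_{\ul{l}}$ and $f$ is a single divided-power generator of $\dU^n(\gl_m)$, and symmetrically. Under the embedding $U_q(\gl_m)\otimes U_q(\gl_p)\into U_q(\gl_{m+p})$, for $f = E_i^{(r)}\one_{\ul{k}}$ with $1\le i\le m-1$ we have $f\otimes \id_{\ul{l}} = E_i^{(r)}\one_{\ul{k}\otimes\ul{l}}$ and $\id_{\ul{l}}\otimes f = E_{p+i}^{(r)}\one_{\ul{l}\otimes\ul{k}}$, since the shuffle satisfies $w(i)=p+i$. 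The desired naturality thus reduces to the identity
\[ T_w\cdot E_i^{(r)}\one_{\ul{k}\otimes\ul{l}} \;=\; E_{p+i}^{(r)}\one_{\ul{l}\otimes\ul{k}}\cdot T_w \]
in $\dU^n(\gl_{m+p})$, and to analogous identities for $F_i^{(r)}$ and for the second factor (where the index change is $m+j\mapsto j$).

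Next I would invoke the classical Lusztig braid group theorem (see, e.g., \cite[Ch.~39]{MR1227098}): conjugation by $T''_{j,-1}$ implements the simple reflection $s_j$ on root vectors, so in particular $T''_{j,-1}E_k = E_k T''_{j,-1}$ whenever $|j-k|\ge 2$. For our shuffle $w$ one has $w(\alpha_i)=\alpha_{p+i}$, and one can choose a reduced expression for $w$ in which, at every intermediate step, the partial image of $\alpha_i$ is a simple root disjoint from the simple reflection currently being applied. Commuting $E_i^{(r)}$ across such a reduced word for $T''_{w,-1}$ therefore invokes only the trivial commutation, and produces the desired identity at the level of $T''$-elements up to a scalar coming from the rescaling $T_j = (-1)^{k_j+k_jk_{j+1}} q^{k_j - k_jk_{j+1}/n} T''_{j,-1}$.

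The hard part will be checking that this scalar equals $1$. On each side of the putative naturality equation one accumulates a product of such prefactors, but on the right-hand side each prefactor is evaluated at a weight shifted by $r\alpha_i$ (because $E_i^{(r)}$ has already acted). The plan is to show that the telescoping difference of these two products, as the simple reflections migrate the coordinates at positions $i,i{+}1$ through to positions $p{+}i, p{+}i{+}1$ step by step, equals $1$: the contributions of the form $q^{k_j-k_jk_{j+1}/n}$ at the positions they pass through sum in a way that is invariant under the shift $k_i\mapsto k_i+r$, $k_{i+1}\mapsto k_{i+1}-r$, and the sign contributions likewise cancel. Parallel arguments handle $F_i^{(r)}$ and the second factor, completing the proof.
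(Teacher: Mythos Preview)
Your reduction to single divided-power generators and to the identity $T_w E_i^{(r)}\one_{\ul{k}\otimes\ul{l}} = E_{p+i}^{(r)}\one_{\ul{l}\otimes\ul{k}} T_w$ is fine, and matches the paper. The gap is the next step. You claim that one can choose a reduced expression $w=s_{j_1}\cdots s_{j_N}$ for the shuffle such that at every intermediate stage the partial image $s_{j_{a+1}}\cdots s_{j_N}(\alpha_i)$ is a simple root \emph{disjoint} from $\alpha_{j_a}$. That is impossible unless $p=0$: if the current partial image is a simple root $\alpha_c$ and $|c-j_a|\ge 2$, then $s_{j_a}(\alpha_c)=\alpha_c$, so the partial image never changes and you end with $\alpha_i$, not $\alpha_{p+i}$. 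In particular, ``only trivial commutation'' can never move $E_i^{(r)}$ to $E_{p+i}^{(r)}$; the index shift is not a scalar effect of the rescaling, it is a genuine root-system move that must come from nontrivial conjugation.

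What the paper does instead is to isolate the one nontrivial identity that performs the index shift, namely
\[
T_i T_j\, E_i \;=\; E_j\, T_i T_j \qquad (|i-j|=1),
\]
equivalently $T_j E_i T_j^{-1} = T_i^{-1} E_j T_i$, which Lusztig proves for the $T''_{\bullet,-1}$ and which passes to $T_\bullet$ after a one-line check of the rescaling factors (the product $c_i(s_j(\ul{k}+\alpha_i))\,c_j(\ul{k}+\alpha_i)$ equals $c_i(s_j(\ul{k}))\,c_j(\ul{k})$). With this relation in hand, for a suitable reduced word for the shuffle one commutes $E_i$ through $T_w$ by alternately using the trivial commutation $T_j E_i = E_i T_j$ for $|i-j|\ge 2$ and passing $E$ through adjacent pairs $T_{c}T_{c\pm 1}$ via the identity above, each such pass bumping the index by one until it reaches $p+i$. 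The scalar bookkeeping you were worried about then disappears: it is confined to the single length-two identity and is immediate there.
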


\begin{proof}
We must prove that for any morphism $ \phi : \ul{k} \rightarrow \ul{k'} $ in $\dU^n(\gl_m) $ we have $ \beta_{\ul{k'}, \ul{l}} \circ (\phi \otimes I) = (I \otimes \phi) \circ \beta_{\ul{k}, \ul{l}} $.

Since the morphisms are generated by the $E_i,F_i $, it suffices to prove the result when $ \phi $ is an $ E_i $ or $ F_i $.  Because of the definition of $ T_w$, it suffices to prove that
$$ T_i T_j E_i = E_j T_i T_j \text{ when $|i-j| = 1$ and } T_j E_i = E_i T_j \text{ when $ |i-j| \ge 2$} $$
along with the same equations when $E_i $ is replaced by $ F_i $.  The second equation follows from the definition of $T_j$ and the commutativity of $ E_i, E_j $ when $ |i-j| \ge 2 $.

Thus it suffices to prove the first equation.  Note that this equation is equivalent to
$$
T_j E_i T_j^{-1} = T_i^{-1} E_j T_i
$$
The analogous equation with $ T_i $ replaced by $ T''_{i,-1} $ was proven by Lusztig, section 39.2.4 of \cite{MR1227098}.  The result for $ T_i $ follows immediately.
\end{proof}

\begin{thm}
This defines a braided monoidal category structure on $ \dU^n(\gl_\bullet)$.
\end{thm}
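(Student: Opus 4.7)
Much of this theorem follows from results already established. The monoidal structure on $\dU^n(\gl_\bullet)$ is concatenation with trivial associator; $\beta$ is natural by Lemma~\ref{le:natural}; each $T_i$ is invertible and satisfies the braid relations by Lemma~\ref{le:braid}. Matsumoto's theorem together with the braid relations then implies that $T_w \one_{\ul{k}}$ is well-defined independently of the chosen reduced expression for $w \in S_m$, so each $\beta_{\ul{k},\ul{l}}$ is in particular a well-defined invertible morphism. What remains is to verify the two hexagon identities \eqref{eq:hex}.

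I will sketch the first identity; the second is symmetric. Write $p=|\ul{k}|$, $q=|\ul{l}|$, $r=|\ul{m}|$, and let $w \in S_{p+q+r}$, $w' \in S_{q+r}$, $w'' \in S_{p+r}$ be the permutations underlying $\beta_{\ul{k}\otimes\ul{l},\ul{m}}$, $\beta_{\ul{l},\ul{m}}$, and $\beta_{\ul{k},\ul{m}}$ respectively. A direct inversion count yields the length-additive factorization $w = (w'' \times \mathrm{id}_q)(\mathrm{id}_p \times w')$ with $\ell(w) = \ell(w') + \ell(w'')$, so concatenating reduced words for the two factors produces a reduced word for $w$. Under the tensor product embedding $U_q(\gl_p) \otimes U_q(\gl_{q+r}) \hookrightarrow U_q(\gl_{p+q+r})$ (shifting right-factor indices up by $p$), each $T_j$ appearing in a reduced product for $T_{w'}$ maps to $T_{p+j}$, with scalar prefactor $(-1)^{k_j + k_j k_{j+1}} q^{k_j - k_j k_{j+1}/n}$ read off the same pair of adjacent weights before and after the shift; hence $I_{\ul{k}} \otimes T_{w'}\one_{(\ul{l},\ul{m})}$ equals $T_{\mathrm{id}_p \times w'}\one_{(\ul{k},\ul{l},\ul{m})}$. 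The analogous identity for $T_{w''} \otimes I_{\ul{l}}$ is immediate. Combining these with the well-definedness of $T_w$ gives
\[
\beta_{\ul{k}\otimes\ul{l},\ul{m}} = T_w \one_{(\ul{k},\ul{l},\ul{m})} = T_{w'' \times \mathrm{id}_q} \, T_{\mathrm{id}_p \times w'} \, \one_{(\ul{k},\ul{l},\ul{m})} = (\beta_{\ul{k},\ul{m}} \otimes I_{\ul{l}}) \circ (I_{\ul{k}} \otimes \beta_{\ul{l},\ul{m}}),
\]
which is the desired hexagon.

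The main obstacle is the compatibility check with the tensor product embedding: one must verify carefully that the scalar prefactors in the definition $T_i = (-1)^{k_i + k_i k_{i+1}} q^{k_i - k_i k_{i+1}/n} T''_{i,-1}$ transport cleanly through the embedding $U_q(\gl_p) \otimes U_q(\gl_{q+r}) \hookrightarrow U_q(\gl_{p+q+r})$. This follows at once from the observation that the prefactor depends only on the two adjacent weights being exchanged at each crossing, and the embedding preserves that pair. Once this is in place, the hexagons reduce to the combinatorial length-additivity of the permutation factorization together with Lemma~\ref{le:braid}.
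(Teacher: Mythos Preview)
Your proof is correct and follows essentially the same approach as the paper's. The paper's own proof is extremely terse: it invokes Lemmas~\ref{le:natural} and~\ref{le:braid} for naturality and invertibility, and then simply asserts that ``the hexagon equations hold by the definition of $\beta$.'' You have unpacked that last phrase, giving the length-additive factorization of the shuffle permutation and checking compatibility of the scalar prefactors with the tensor embedding; this is exactly the content hidden behind the paper's one-line dismissal. One small notational quibble: you use $q$ both for the quantum parameter and for the length of $\ul{l}$, which you may want to rename.
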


\begin{proof}
First, by Lemma \ref{le:natural}, $\beta $ is a natural transformation and by Lemma \ref{le:braid}, $\beta $ is a natural isomorphism.  The hexagon equations hold by the definition of $ \beta $ and thus we conclude that $ \beta $ gives a braided monoidal category structure.
\end{proof}

\subsection{Comparison of braidings}
Recall that $ \Rep(\SL_n) $ carries the structure of a braided monoidal category using the usual $R$-matrix.  We will use $ \beta $ to denote the braiding. Our goal now is to prove that
\begin{thm} \label{th:braiding}
The functor $ \Phi^n: \dU^n(\gl_\bullet) \rightarrow \Rep(\SL_n) $ is a functor of braided monoidal categories.
\end{thm}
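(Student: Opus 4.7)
The approach is to reduce the statement, via the hexagon equations and naturality of the braiding, to a single explicit verification on $\bC_q^n \otimes \bC_q^n$. The functor $\Phi^n$ is manifestly monoidal (on objects by concatenation matching tensor product, on morphisms via the embedding $U_q(\gl_m) \otimes U_q(\gl_p) \hookrightarrow U_q(\gl_{m+p})$ together with the compatibility of skew Howe duality with tensor factors), so the only content is braiding compatibility. Since the hexagon equations \eqref{eq:hex} hold on both sides, the identity $\Phi^n(\beta_{\ul k,\ul l}) = \beta_{\Phi^n(\ul k),\Phi^n(\ul l)}$ reduces at once to the case of single objects $\ul k = (k)$ and $\ul l = (l)$ with $0\le k, l\le n$.

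Next I would reduce to $k=l=1$ by naturality. For each $k$, the iterated quantum wedge multiplication $(\bC_q^n)^{\otimes k}\twoheadrightarrow \Alt{k}{q}\bC_q^n$ is $U_q(\sl_n)$-equivariantly surjective and lifts, via Lemma \ref{lem:Eaction}, to a morphism $\mathbf{m}_k\in\dU^n(\gl_k)$ built as a composition of $F$-rungs (i.e.\ a word in divided powers of the $E_i$'s) from $(1^k)$ to $(k,0,\ldots,0)$. Applying naturality of $\beta$ (Lemma \ref{le:natural}) in $\dU^n(\gl_{k+l})$ to $\mathbf{m}_k\otimes\mathbf{m}_l$, together with the corresponding naturality of the $R$-matrix in $\Rep(\SL_n)$ and the fact that the $\Alt{0}{q}\bC_q^n = \bC$ tensor factors are trivial, shows that the agreement at $(k,l)$ follows from the agreement at $(1^k,1^l)$. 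A further application of the hexagon inside $\dU^n(\gl_{k+l})$ then brings us down to the single case $k=l=1$.

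For the base case, compute $T_1 \one_{(1,1)}\in\dU^n(\gl_2)$ using Lemma \ref{le:singlesum}: because $F_1^{(b)}\one_{(1,1)} = 0$ for $b\ge 2$ in the $n$-bounded truncation (the target weight $(-1,3)$ is not $n$-bounded), the sum collapses to
$$T_1 \one_{(1,1)} = q^{1-1/n}\bigl(\one_{(1,1)} - q^{-1}E_1 F_1\one_{(1,1)}\bigr).$$
I would then apply Lemma \ref{lem:Eaction} to compute the image of this element under $\Phi^n$ on the basis $\{x_i\otimes x_j\}$ of $\bC_q^n\otimes\bC_q^n$ and match it, term by term, with the standard $R$-matrix braiding on $\bC_q^n\otimes\bC_q^n$ (including the $q^{-1/n}$ factor intrinsic to the $U_q(\sl_n)$ normalization). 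The main obstacle is this last bookkeeping step: one must reconcile the signs arising in Lemma \ref{lem:Eaction}, the quadratic relations among the $z_{ij}$'s, and the pivotal/ribbon normalization of the $R$-matrix. The constant $(-1)^{k_i+k_ik_{i+1}}q^{k_i - k_ik_{i+1}/n}$ appearing in the definition of $T_i$ is engineered precisely so that the match holds on the nose, and it is exactly this choice that has to be verified in the base case.
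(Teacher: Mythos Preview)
Your overall architecture matches the paper's: reduce by the hexagon equations to singleton objects $(k),(l)$, then use naturality with a morphism relating $(1^k)$ and $(k,0^{k-1})$ to reduce to the case $(1^k),(1^l)$, apply hexagon again to reach $(1),(1)$, and finally verify this base case directly. The paper uses the \emph{injective} map $\phi:(k,0^{k-1})\to(1^k)$ (so $\Phi^n(\phi):\Alt{k}{q}\bC_q^n\hookrightarrow(\bC_q^n)^{\otimes k}$), whereas you use the dual \emph{surjective} map $\mathbf{m}_k:(1^k)\to(k,0^{k-1})$; either works equally well for the cancellation step. Your base-case computation via Lemma~\ref{le:singlesum} is also fine (the paper instead checks eigenvalues on $S^2_q$ and $\Alt{2}{q}$, which is a little slicker but equivalent).

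There is, however, one genuine gap. Your naturality argument with $\mathbf{m}_k\otimes\mathbf{m}_l$ only yields agreement at the pair $\bigl((k,0^{k-1}),(l,0^{l-1})\bigr)$, not at $\bigl((k),(l)\bigr)$. To bridge these you write ``the $\Alt{0}{q}\bC_q^n=\bC$ tensor factors are trivial,'' but that is a statement about $\Rep(\SL_n)$ only. In $\dU^n(\gl_\bullet)$ the object $(0)$ is \emph{not} the tensor unit (the unit is the empty sequence in $\dU^n(\gl_0)$), and since $\Phi^n$ is at this stage only known to be monoidal, there is no formal reason why $\Phi^n(\beta_{(0),(l)})$ or $\Phi^n(\beta_{(k),(0)})$ should be the identity. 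Equivalently, when you expand $\beta_{(k,0^{k-1}),(l,0^{l-1})}$ via the hexagon into singleton braidings, all the factors involving a $0$ must be shown to map to identities under $\Phi^n$. The paper does exactly this as a separate claim: when $k_i=0$ or $k_{i+1}=0$, one has $T_i\one_{\ul{k}}=E_i^{(k_{i+1})}\one_{\ul{k}}$ (resp.\ $F_i^{(k_i)}\one_{\ul{k}}$), using Lusztig's description of $T''_{i,-1}$ on lowest (resp.\ highest) weight vectors together with the precise normalizing constant in the definition of $T_i$; the image under $\Phi^n$ is then the identity. This computation is short but not vacuous --- in particular it is where the factor $(-1)^{k_i+k_ik_{i+1}}q^{k_i-k_ik_{i+1}/n}$ earns its keep beyond the $(1,1)$ case --- and you need to include it.
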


\begin{proof}
Clearly, $\Phi^n$ is a tensor functor.  So we need to show that $ \Phi^n $ carries the braiding in $ \dU^n(\gl_\bullet) $ to the braiding in $\Rep(\SL_n) $.  By the hexagon equations (\ref{eq:hex}), it suffices to prove that if $ 0 \le k, l \le m $ then $$ \Phi^n(\beta_{k, l}) = \beta_{\alt^{k}_{q} \mathbb C_q^n, \alt^{l}_{q} \mathbb C_q^n}. $$

First we claim that when $ k_i = 0 $ or $ k_{i+1} = 0 $, then $ \Phi^n(T_i \one_{\ul{k}}) $ is the identity.  To see this recall that Lusztig proved (Proposition 5.2.2(b) in \cite{MR1227098}) that if $ F_i v = 0 $ and $ H_i v = -r v $, then $ T''_{i,-1}(v) = E^{(r)} v $.  On the other hand, Lusztig proved (in the same Proposition together with Proposition 5.2.3(b)), that if $ E_i v = 0 $ and $H_i v = rv $, then $ T''_{i,-1}(v) = (-1)^r q^{-r} F_i^{(r)} v $.  From this it follows that $ T_i \one_{\ul{k}} = E_i^{(k_{i+1})} \one_{\ul{k}} $ if $ k_i = 0 $ and $ T_i \one_{\ul{k}} = F_i^{(k_i)} $ if $ k_{i+1} = 0 $.  Thus the claim follows.

From the above claim, we see that if we consider the weights $ (k, 0^{k-1}) = (k, 0, \dots, 0)$ and similarly $ (l, 0^{l-1}) $ then we have $ \Phi^n(\beta_{(k, 0^{k-1}), (l, 0^{l-1})}) =  \Phi^n(\beta_{k, l})$.

Now consider weights $ 1^k = (1, \dots, 1) $ where there are $ k $ $1$s and similarly $ 1^l$.  We define morphisms $ \phi: (k, 0^{k-1})  \rightarrow 1^k$ and  $ \psi : (l, 0^{l-1}) \rightarrow 1^{l} $ by
$$
\phi = F_1 F_2 F_1 \cdots F_{k-1} \cdots F_1 \one_{(k, 0^{k-1})}
$$
and similarly for $ \psi $.

Note that $ \Phi^n(\phi) $ is the injective map $ \Alt{k}{q} \mathbb C_q^n \rightarrow (\mathbb C_q^n)^{\otimes k} $.  By the naturality of $ \beta $, we have $  \beta_{1^k, 1^l} \circ \phi \otimes \psi =  \psi \otimes \phi \circ \beta_{(k,0^{k-1}), (l, 0^{l-1})} $.  Thus we have
$$
  \Phi^n(\beta_{1^k, 1^l})\circ \Phi^n(\phi) \otimes \Phi^n(\psi) = \Phi^n(\psi) \otimes \Phi^n(\phi) \circ \Phi^n(\beta_{k,l})
$$

From Lemma \ref{le:standardbraid} below, we see that $ \Phi^n(\beta_{1,1}) = \beta_{\mathbb C_q^n, \mathbb C_q^n} $ and thus by the hexagon equations (\ref{eq:hex}), $\Phi^n(\beta_{1^k, 1^l}) = \beta_{(\mathbb C_q^n)^{\otimes k}, (\mathbb C_q^n)^{\otimes l}} $.  So the above equation becomes
$$
 \beta_{(\mathbb C_q^n)^{\otimes k}, (\mathbb C_q^n)^{\otimes l}} \circ \Phi^n(\phi) \otimes \Phi^n(\psi) = \Phi^n(\psi) \otimes \Phi^n(\phi) \circ  \Phi^n(\beta_{k,l})
$$
On the other hand, by the naturality of the braiding in $\Rep(\SL_n)$, we have
$$
 \beta_{(\mathbb C_q^n)^{\otimes k}, (\mathbb C_q^n)^{\otimes l}} \circ \Phi^n(\phi) \otimes \Phi^n(\psi) = \Phi^n(\psi) \otimes \Phi^n(\phi) \circ \beta_{\alt^{k}_{q} \mathbb C_q^n, \alt^{l}_{q} \mathbb C_q^n}.
$$
Thus the injectivity of $ \Phi^n(\psi) \otimes \Phi^n(\phi)$ implies the desired result.
\end{proof}

\begin{lem} \label{le:standardbraid}
$\Phi^n(\beta_{1,1}) = \beta_{\mathbb C_q^n, \mathbb C_q^n}$

\end{lem}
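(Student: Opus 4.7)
The plan is a direct computation. By construction $\beta_{1,1} = T_1 \one_{(1,1)}$, and by Lemma \ref{le:singlesum} this expands as $\sum_{a,b\ge 0,\,-a+b=0}(-q)^{-b} E_1^{(a)} F_1^{(b)} \one_{(1,1)}$. Since $F_1^{(b)} \one_{(1,1)}$ lies in weight $(1-b,1+b)$, which is $n$-bounded only for $b\in\{0,1\}$, only two terms survive, giving
\[
T_1 \one_{(1,1)} \;=\; q^{1-1/n}\bigl(\one_{(1,1)} - q^{-1}\, E_1 F_1\, \one_{(1,1)}\bigr).
\]

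Next, identify the images under $\Phi^n$ of the two summands. By Proposition \ref{prop:commutes} together with the definition of $\Gamma_n$ from Section \ref{sec:deffunctor}, $\Phi^n(F_1 \one_{(1,1)}) = M_{1,1}$ and $\Phi^n(E_1 \one_{(0,2)}) = M'_{1,1}$, using the canonical identification $\Alt{0}{q}\bC_q^n \otimes \Alt{2}{q}\bC_q^n = \Alt{2}{q}\bC_q^n$. Hence
\[
\Phi^n(T_1 \one_{(1,1)}) \;=\; q^{1-1/n}\bigl(I - q^{-1}\, M'_{1,1}\circ M_{1,1}\bigr).
\]
Evaluating this operator on the basis $x_i \otimes x_j$ of $\bC_q^n \otimes \bC_q^n$ using the explicit formulas for $M_{1,1}$ and $M'_{1,1}$ from Section \ref{sec:generating-morphisms}, the three cases $i=j$, $i<j$, $i>j$ yield
\begin{align*}
x_i \otimes x_i &\mapsto q^{1-1/n}\, x_i \otimes x_i, \\
x_i \otimes x_j &\mapsto q^{-1/n}\, x_j \otimes x_i \qquad \text{if } i < j, \\
x_i \otimes x_j &\mapsto q^{-1/n}\bigl((q-q^{-1})\, x_i \otimes x_j + x_j \otimes x_i\bigr) \qquad \text{if } i > j.
\end{align*}

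This is exactly $q^{-1/n}$ times the standard Jimbo $R$-matrix action on $\bC_q^n \otimes \bC_q^n$, which is the braiding $\beta_{\bC_q^n,\bC_q^n}$ of $\Rep(\SL_n)$; the central scalar $q^{-1/n}$ is the discrepancy between the $\gl_n$ and $\sl_n$ braidings, and it was engineered into the definition of $T_i$ via the prefactor $q^{k_i - k_i k_{i+1}/n}$ for precisely this purpose. The only real obstacle is bookkeeping: verifying that the sign $(-1)^{k_i + k_i k_{i+1}}$ and the fractional $q$-power in the definition of $T_i$ reproduce exactly the R-matrix normalization chosen for the braiding on $\Rep(\SL_n)$, which is the whole reason that prefactor was introduced.
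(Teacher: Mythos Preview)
Your proof is correct but takes a different route from the paper's. The paper argues via eigenvalues: since $\bC_q^n \otimes \bC_q^n$ decomposes as $S^2_q(\bC_q^n) \oplus \Alt{2}{q}(\bC_q^n)$ and both $\Phi^n(\beta_{1,1})$ and $\beta_{\bC_q^n,\bC_q^n}$ are intertwiners, it suffices to compare scalars on each summand. The paper observes that $T''_{1,-1}\one_{(1,1)} = \one_{(1,1)} - q^{-1}E_1F_1\one_{(1,1)}$ acts by $1$ on $S^2_q$ (since $F_1$ kills it) and by $-q^{-2}$ on $\Alt{2}{q}$ (since $F_1E_1 = [2]_q$ there), and then matches the rescaled values $q^{1-1/n}$ and $-q^{-1-1/n}$ against the standard eigenvalues of the $R$-matrix braiding, quoted from the twist-factor formula.

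Your approach instead computes $\Phi^n(T_1\one_{(1,1)}) = q^{1-1/n}(I - q^{-1} M'_{1,1}\circ M_{1,1})$ directly on the basis $x_i\otimes x_j$ and recognises the result as the explicit Jimbo $R$-matrix. This is more hands-on and arguably more elementary, since it avoids appealing to the decomposition into irreducibles and to the twist-factor formalism. On the other hand, it requires the reader to know (or accept) the explicit matrix form of $\beta_{\bC_q^n,\bC_q^n}$, which the paper never writes down; the paper's eigenvalue argument only needs the two scalars, which are easier to quote cleanly. Your final paragraph acknowledging the ``bookkeeping'' obstacle is a little hand-wavy: since you have already produced the explicit formulas, you should simply state the matching as a fact (with a reference for the Jimbo $R$-matrix) rather than describe it as something still to be verified.
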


\begin{proof}
This follows by a direct computation.  First, it is a standard fact that $  \beta_{\mathbb C_q^n, \mathbb C_q^n} $ acts by $ q^{1-1/n} $ on $ S^2_q (\mathbb C_q^n) $ and by $-q^{-1-1/n} $ on $ \Alt{2}{q} (\mathbb C_q^n)$. (To see this, we use the fact that the eigenvalue of the square of the braiding on the $Y$ summand of $X \otimes X$ is $\theta(Y)\theta(X)^{-2}$, where $\theta(V_\lambda) = q^{\langle\lambda,\lambda+2\rho\rangle}$ is the twist factor. See \cite[\S 1.1.4 and \S 3.5]{MR2783128}.) So it suffices to check that $\Phi^n(\beta_{1,1}) $ does the same thing.

Now we claim that $ T''_{1,-1} $ acts by 1 on $ S^2_q(\bC_q^n) $ and acts by $ -q^{-2} $ on $ \Alt{2}{q}(\bC_q^n) $. This is because we have the following action of $U_q(\gl_2)$
$$\xymatrix{
\Alt{2}{q}(\bC_q^n) \ar@/^1pc/[r]^E & \bC_q^n \otimes \bC_q^n \ar@/^1pc/[l]^F \ar@/^1pc/[r]^E & \Alt{2}{q}(\bC_q^n) \ar@/^1pc/[l]^F }
$$
where $T''_{1,-1} \one_{(1,1)} = \one_{(1,1)} - q^{-1}EF \one_{(1,1)}$. Thus $F$ acts by zero on the summand $S^2_q(\bC_q^n) \subset \bC_q^n \otimes \bC_q^n$ which means $T''_{1,-1}$ acts by $1$. Also,
$$T''_{1,-1}(E) = (E - q^{-1}EFE) = E - q^{-1}(q+q^{-1})E = -q^{-2}E$$
which means that $T''_{1,-1}$ acts by multiplication by $-q^{-2}$ on the summand $\Alt{2}{q}(\bC_q^n) \subset \bC_q^n \otimes \bC_q^n$.

From the above computation of $ T''_{1,-1} $ we see that $\Phi^n(\beta_{1,1})$ acts by $ q^{1-1/n} $ on $ S^2_q (\mathbb C_q^n) $ and by $-q^{-1-1/n} $ on $ \Alt{2}{q} (\mathbb C_q^n)$ as desired.
\end{proof}

Using the form for $ T''_{1,-1} $ given in Lemma \ref{le:singlesum}, we can translate Theorem \ref{th:braiding} to the language of webs as follows.
\begin{cor}
The braiding $ \beta_{\alt^{k}_{q} \mathbb C_q^n, \alt^{l}_{q} \mathbb C_q^n } $ is given by the following sum of webs

$$ (-1)^{k+kl} q^{k-\frac{kl}{n}} \sum_{\substack{a,b\geq 0 \\ b-a = k-l}} (-q)^{-b}
\begin{tikzpicture}[baseline=40]
\laddercoordinates{1}{2}
\node[left] at (l00) {$k$};
\node[right] at (l10) {$l$};
\ladderEn{0}{0}{$k{-}b$}{$l{+}b$}{$b$}
\ladderFn{0}{1}{$l$}{$k$}{$a$}
\end{tikzpicture} $$
\end{cor}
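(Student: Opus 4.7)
The plan is to unpack definitions and combine Theorem \ref{th:braiding}, Lemma \ref{le:singlesum}, and the commutativity of the right triangle in diagram (\ref{diag:main2}). By Theorem \ref{th:braiding}, the image under $\Phi^n$ of the braiding morphism $\beta_{k,l} \in \Hom_{\dU^n(\gl_\bullet)}((k,l),(l,k))$ equals the target braiding $\beta_{\Lambda^k_q \bC^n_q, \Lambda^l_q \bC^n_q}$, so it suffices to compute $\Phi^n(\beta_{k,l})$. By the construction of the braiding in Section \ref{se:braiding}, $\beta_{k,l} = T_1 \one_{(k,l)} \in \dU^n(\gl_2)$, and by the normalization introduced there,
$$T_1 \one_{(k,l)} = (-1)^{k + kl}\, q^{k - kl/n}\, T''_{1,-1} \one_{(k,l)}.$$

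Next, I would apply Lemma \ref{le:singlesum} to rewrite
$$T''_{1,-1} \one_{(k,l)} = \sum_{\substack{a, b \geq 0 \\ b - a = k - l}} (-q)^{-b}\, E_1^{(a)} F_1^{(b)} \one_{(k,l)}.$$
Observing that $E_1^{(a)} F_1^{(b)}\one_{(k,l)}$ is the composite that first applies $F_1^{(b)}$ (sending the weight $(k,l)$ to $(k-b, l+b)$) and then $E_1^{(a)}$ (sending $(k-b, l+b)$ to $(l,k)$, using $b-a = k-l$), we see that we are in the $n$-bounded range provided the displayed summation range is respected; in particular the finiteness of the sum is built into $\dU^n(\gl_2)$.

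Then, by the commutativity of the right triangle in (\ref{diag:main2}), $\Phi^n = \Gamma_n \circ \Psi^n_2$. Applying the dictionary at the start of Section \ref{sec:psi}, each summand $E_1^{(a)} F_1^{(b)} \one_{(k,l)}$ translates under $\Psi^n_2$ into the two-rung ladder with uprights labelled $k$ and $l$ at the bottom, a bottom rung labelled $b$ (coming from $F_1^{(b)}$) and a top rung labelled $a$ (coming from $E_1^{(a)}$), terminating in the labels $(l,k)$ at the top. This is precisely the web displayed in the statement.

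Pulling the scalar $(-1)^{k+kl} q^{k-kl/n}$ outside the sum yields the claimed formula. There is no conceptual obstacle here; the whole argument is an assembly of previously established results. The only delicate points are bookkeeping: verifying the sign conventions inside the sum ($(-q)^{-b}$) and out front, the range of summation, and the correct bottom-to-top reading of the ladder (so that $F^{(b)}$ is indeed the bottom rung and $E^{(a)}$ the top rung, in agreement with the definition of the functor $\Lad_m^n \to \dU^n(\gl_m)$).
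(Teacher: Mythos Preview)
Your proposal is correct and follows exactly the argument the paper has in mind: combine Theorem \ref{th:braiding} with the definition of $T_i$ and Lemma \ref{le:singlesum}, then pass to webs via $\Psi^n_2$ (using Proposition \ref{prop:commutes}). The paper states this in a single sentence preceding the corollary, so your write-up is simply a more detailed unpacking of the same steps.
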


\bibliographystyle{alpha}
\bibliography{bibliography/bibliography}
\vspace{1cm}

\end{document}